%% file: aHTCovariantReps.tex
\documentclass[a4paper, 12pt, reqno]{amsart}
\input{preamble}

\title[Covariant representations of actions of inverse semigroups]{Covariant representations of actions of inverse semigroups:  a new approach to the reduced and essential crossed-product C*-algebras.}

\author[an Huef]{Astrid an Huef}
\author[Tolich]{Ilija Tolich}
\address[A.~an Huef and I.~Tolich]{School of Mathematics and Statistics, Victoria University of Wellington, PO Box 600, Wellington 6140, Aotearoa New Zealand.}
\email[A.~an Huef]{\href{mailto:astrid.anhuef@vuw.ac.nz}{astrid.anhuef@vuw.ac.nz}}
\email[I.~Tolich]{\href{mailto:ilija.tolich@vuw.ac.nz}{ilija.tolich@vuw.ac.nz}}

\date{\today}

\subjclass[2020]{46L05}

\keywords{Inverse semigroup, action by an inverse semigroup on a C*-algebra, covariant representation, reduced crossed product, essential crossed product}

\thanks{This research was supported by  Marsden grant 24-VUW-050 of the Royal Society of New Zealand. This paper arose from work on \cite{ACaHMT}, and we would like to thank our co-authors  Becky Armstrong, Lisa Clark and Diego Mart\'inez.
}

\begin{document}

\begin{abstract}
We consider an action of an inverse semigroup on a C*-algebra $A$ and use it to construct a groupoid of germs with unit space the spectrum of $A$. Motivated by the representation theory of C*-algebras of groupoids, we construct a concrete family of covariant representations for the action. We use this family to give new definitions of the reduced and essential crossed product C*-algebras that avoid, respectively, passing to the double commutant and local multiplier algebra of $A$. Our reduced crossed product is isomorphic to the one defined by Exel, Buss and Meyer, and when the inverse semigroup is quasi-countable  our essential crossed product is isomorphic to the one defined by Kwa\'sniewski  and Meyer. 
\end{abstract}

\maketitle

\section{Introduction}
 Let $\alpha$ be an action by automorphisms of a locally compact group $G$   on a C*-algebra $A$. The maximal crossed product $A\rtimes_\full G$ is the C*-algebra universal for covariant representations $(\pi, U)$ of the system $(A, G,\alpha)$ \cite{Raeburn-crossed-products, Williams-x-products}.  Thus $A\rtimes_\full G$  is generated by  a universal covariant representation and every non-degenerate representation is the integrated form $\pi\times U$ of a covariant representation $(\pi, U)$. Concretely, $A\rtimes_\full G$ is the completion of $C_c(G, A)$, the continuous compactly supported functions on $G$ with values in $A$, in a universal norm where for  $f\in C_c(G,A)$ the norm of $f$ is the supremum  over all covariant representations of $\|\pi\times U(f)\|$.    The reduced crossed product $A\rtimes_\red G$ is a quotient  of $A\rtimes G$ obtained by completing the image of $C_c(G,A)$ acting on the space of a  regular representation  on a concrete Hilbert space of square-integrable functions. If the action is minimal and topologically free, then $A\rtimes_\red G$ is simple by \cite[Corollary on page~122]{Archbold-Spielberg}.

A partial automorphism of a  C*-algebra $A$ is an isomorphism between two ideals of $A$ \cite{Exel-1994}.
In \cite{Sieben-JAMS-1997}, Sieben defined   an action $\alpha$ of a discrete inverse semigroup $S$ with an identity on a C*-algebra $A$ by partial automorphisms, and developed an appropriate notion of covariant representations and then defined a universal crossed product C*-algebra $A\times_\full S$. This theory was extended by Exel \cite{Exel2008} and Paterson  \cite{Paterson-groupoid-book}  to allow for non-unital inverse semigroups. Over the years, the definition of the $*$-subalgebra that is completed to $A\times_\full S$ has been refined  in \cite{BussExelMeyer2017, BussMartinez2023, KwasniewskiMeyer2021} to the one we denote by $A\rtimes_\alg S$.  In particular, as pointed out in \cite[Remark~3.17]{KwasniewskiMeyer2021}, there is an injective embedding of $A\rtimes_\alg S$ into $A\rtimes_\full S$.

The reduced crossed product $A\rtimes_\red S$, a quotient of $A\rtimes_\full S$, was first introduced by Exel in \cite[\S~8.3]{Exel2011} although in the greater generality of a  reduced cross-sectional algebra of a Fell bundle over $S$. The construction was revisited by Buss, Exel and Meyer in \cite[Definition~4.1]{BussExelMeyer2017} with an equivalent definition using  a weak expectation   \[\ER\colon A\rtimes_\full S \to  A''\] into the enveloping von Neumann algebra of $A$, and  in \cite{KwasniewskiMeyer2021} was shown to be  the quotient of $A\rtimes_\full S$ by the ``nucleus''  $\{a\in A\rtimes_\full S: \ER(a^*a)=0\}$.  
It is shown in   \cite{BussExelMeyer2017} that the norm of $A\rtimes_\red S$ is the supremum of  norms in $\Ind\pi''$, where $\pi''$ is the extension to $A''$ of a representation $\pi$ of $A$; the induction is via  a right-Hilbert $(A''\rtimes_\red S)$-$A''$ bimodule.

Khoshkam and Skandalis in \cite{KhoshkhamSkandalis2004}  also defined a   universal crossed product $A\rtimes_{\mathrm{KS}} S$.  They used a weaker notion of covariant representation so that Sieben's crossed product is a quotient of $A\rtimes_{\mathrm{KS}} S$.  Furthermore, they  defined a reduced crossed product   using a concrete family of covariant representations to obtain a quotient of $A\rtimes_{\mathrm{KS},\red} S$, which, from a philosophical point of view, is how a reduced crossed product should be defined. Khoshkham and Skandalis' reduced crossed product $A\rtimes_{\mathrm{KS},\red} S$ is distinct from $A\rtimes_{\red} S$: for example, when $S$ is $E*$-unitary without a $0$ element,  then $A\rtimes_{\red} S$ is a quotient of $A\rtimes_{\mathrm{KS},\red} S$ (see \cite[Proposition 7.5]{BedosNorling2017}). 
For other literature examining the Khoshkham-Skandalis construction see \cite{BedosNorling2017,Sekiyama2026,Sundar2018}.

The first objective of this paper is to show explicitly that $A\rtimes_\red S$ is a quotient of $A\rtimes_\full S$ under a family of concrete covariant representations. 
We use  an induced action $\beta$ of $S$ on the (possibly non-Hausdorff) spectrum $\hat{A}$ of $A$ by partial homeomorphisms to construct a groupoid of germs $G(\hat{A},S,\beta)$. Using the case when $A$ is commutative for inspiration,  we  obtain  a family   $\{(\rho_\pi,\sigma_\pi):\pi\in \hat{A}\}$ of covariant representations that   mimic   left regular representations of the groupoid C*-algebra.  We show that for $a\in A\rtimes_\alg S$ the supremum of $\|\rho_\pi\times \sigma_\pi(a)\|$ gives a  norm on $A\rtimes_\alg S$ and define a C*-algebra  $\Red(A,S,\alpha)$ as  the completion of $A\rtimes_\alg S$  in that norm.  In \Cref{sec reconcile} we prove that $\Red(A, S,\alpha)$ equals $A\rtimes_\red S$; equivalently we prove that the nucleus of $\ER$ is $\cap _{\pi\in\hat A}\ker(\rho_\pi\times\sigma_\pi)$ (see \Cref{thm reconciliation for reduced}).  Furthermore, we show that when $\pi\in \hat A$,  the restriction of $\Ind\pi''$ to $A\rtimes_\red S$ is unitarily equivalent to $\rho_\pi\times \sigma_\pi$. 
The advantage of our definition of $A\rtimes_\red S$ is that it only uses covariant representations and avoids the technicalities of the weak conditional expectation taking values in the double commutant.

Another important quotient of $A\rtimes_\full S$, defined by defined by Kwa\'sniewski and Meyer in \cite{KwasniewskiMeyer2021}, is  the essential crossed product $A\rtimes_\ess S$. It is by definition the  quotient of $A\rtimes_\full S$ by the nucleus of a generalised expectation \[\EL\colon  A\rtimes_\full S\to \Mloc(A)\] into the local multiplier algebra $\Mloc(A)$ of $A$.  If the action of $S$ on $A$ is aperiodic, then $A\rtimes_\ess S$ is the unique quotient of $A\rtimes_\full S$ that detects ideals in $A$ \cite[Theorem~6.5]{KwasniewskiMeyer2021}. Furthermore, if the action of $S$ on $A$ is aperiodic and minimal, then $A\rtimes_\ess S$ is simple \cite[Theorem~6.6]{KwasniewskiMeyer2021}. 

As noted in \cite[\S2]{KKLRU}, this work of Kwa\'sniewski and Meyer also gives a systematic and unifying approach to the definition of the essential C*-algebra $C^*_\ess(G)$ of an \'etale groupoid $G$, developed over years in \cite{Clark-Exel-Pardo-Sims-Starling, ExelPitts2022, Exel-2011-PAMS, KhoshkhamSkandalis2002}.
The key idea behind this realisation is that the set $\Bis(G)$ of    open bisections of $G$ is an inverse semigroup which acts naturally on the unit space $X$ of $G$.   The crossed product $C_0(X)\rtimes_\full \Bis(G)$ is isomorphic to the full groupoid C*-algebra $C^*(G)$ by \cite[Proposition~9.8]{Exel2008}, see also \cite[Theorem~3.3.1]{Paterson-groupoid-book}, \cite[Theorem~8.1]{Quigg-Sieben}.  (The reduced and essential groupoid C*-algebras of $G$ also coincide, respectively,  with  $C_0(X)\rtimes_\red \Bis(G)$ and   $C_0(X)\rtimes_\ess \Bis(G)$ \cite[Theorem 8.1]{ACaHMT}.)

The second objective  of this paper is to characterise the nucleus of $\EL$ in terms of the representations $\rho_\pi\times \sigma_\pi$ above, which we achieve  when $S$ is quasi-countable. Consequently, if $S$ is quasi-countable, the essential crossed product is a completion (after modding out by vectors of norm zero) of $A\rtimes_\alg S$ in a seminorm defined using only the family $\{(\rho_\pi,\sigma_\pi):\pi\in \hat{A}\}$. When $S$ is not  quasi-countable, we can still characterise the nucleus of $\EL$ using the family $\{\rho_\pi\times \sigma_\pi:\pi\in \hat A\}$.  Again, the advantage of our construction is that it avoids the technicalities of the generalised expectation taking values in the local multiplier algebra. 

\subsection*{Outline} 
We  begin in \Cref{sec prelim} by presenting the  key definitions and results about an action of an inverse semigroup $S$ on a C*-algebra $A$, covariant representations of this system and the  definition of the maximal crossed product.  We also introduce the induced action $\beta$ of $S$ on the spectrum  $\hat A$ of $A$.
In \Cref{sec reg reps} we use a groupoid of germs $G(\hat{A},S,\beta)$ to construct and then study our family of concrete covariant representations $\{(\rho_\pi,\sigma_\pi):\pi\in \hat{A}\}$. At the end of \Cref{sec reg reps}  we define two C*-algebras $\Red(A,S,\alpha)$ and $\Ess(A,S,\alpha)$  using the
representations $(\rho_\pi,\sigma_\pi)$; in \Cref{sec reconcile} we show that $\Red(A,S,\alpha)=A\rtimes_\red S$, and that $\Ess(A,S,\alpha)=A\rtimes_\ess S$ when $S$ is quasi-countable.

\section{Preliminaries}\label{sec prelim}
We begin by recalling some preliminary results that we will use throughout the paper. 

\subsection{The spectrum of a C*-algebra}
Let $A$  be a C*-algebra and let $\pi$ be an irreducible representation of $A$;  we use the
same symbol $\pi$ to denote the unitary equivalence class of $\pi$ in the spectrum  $\hat A$ of $A$.

Let $I$ be an ideal of a C*-algebra $A$ and let $\pi\colon I\to B(H_\pi)$ be a  non-degenerate representation.  Then there is a unique representation $\bar{\pi}\colon A\to B(H_\pi)$ such that $\bar{\pi}|_I=\pi$.
Furthermore, if $\pi$ is injective and $I$ is an essential ideal of $A$, then $\bar{\pi}$ is injective. See, for example, \cite[Proposition~2.50]{RaeburnWilliams1998}.  There is a  homeomorphism $\pi\mapsto \pi|_{I}$  from the open subset $\{\pi \in \hat{A} : \pi|_I\ne 0\}$  of $\hat A$ onto $\hat I$, with inverse given by  the unique extension $\pi \mapsto \bar{\pi}$ (see, for example, \cite[Proposition~A.27]{RaeburnWilliams1998}).

\subsection{Actions of inverse semigroups}

For inverse semigroups and actions of inverse semigroups on C*-algebras, we follow \cite{ACaHMT, BussMartinez2023, Exel2008, Paterson-groupoid-book, Sieben-JAMS-1997}.
An \emph{inverse semigroup} is a semigroup $S$ such that every $s \in S$ has a unique \emph{inverse} $s^* \in S$ such that $ss^*s = s$ and $s^*ss^* = s^*$.

Let $S$ be an inverse semigroup. Then $S$ has a 
partial order given by \[s \le t \iff s = ss^*t\] for $s, t \in S$. An $e \in S$ is called an \emph{idempotent} if $e^2 = e$. We write $E(S)$ for the collection of idempotents in $S$. For $e \in E(S)$, we have $e^* = e$. For  $e, f \in E(S)$, we have $e \le f \iff e = ef$, and it follows that \[E(S) = \{ ss^* : s \in S \}.\] Moreover, $E(S)$ is a commutative subsemigroup of $S$, and it follows that $s \le t \iff s = ts^*s$ for $s, t \in S$. Since  $E(S)$ is commutative, we have $ses^*, s^*es \in E(S)$ for all $s \in S$ and $e \in E(S)$. If $S$ has an identity element $1$, then we say that $S$ is \emph{unital}.  If $S$ is unital, then $E(S) = \{s \in S : s \le 1\}$.

Let $X$ be a set and let $\II(X)$  be the set of partial bijections on $X$; that is, bijections $f\colon \dom(f) \to \ran(f)$ where $\dom(f)$ and $\ran(f)$ are subsets of $X$.
Then  $\II(X)$ is an inverse semigroup with respect to composition ``where it makes sense''; that is, $f \circ g$ is defined by $(f \circ g)(x) \coloneqq f(g(x))$ for all $x \in g^{-1}\big(\!\ran(g) \cap \dom(f)\big)$. The inverse of $f \in \II(X)$ is  $f^*\colon \ran(f) \to \dom(f)$ given by $f^*(x) \coloneqq f^{-1}(x)$. Note that for each $f \in \II(X)$, $f^*\circ f$ is the identity map on $\dom(f)$, and $f\circ f^*$ is the identity map on $\ran(f)$. 

\begin{defn} 
\label{defn: C*-algebra action}
An \emph{action} $\alpha\colon S \curvearrowright A$ of an inverse semigroup $S$ on a C*-algebra $A$ is a homomorphism  $\alpha\colon S \to \II(A)$ of  inverse semigroups   such that
\begin{enumerate}
\item \label{item: alpha_s iso} for each $s \in S$, $\alpha_s$ is a C*-isomorphism and its domain $A_{s^*s}$ is an ideal of $A$; and
\item \label{item: ideals dense} the linear span of $\{A_{s^*s}:s\in S\}$ is dense in $A$.
\end{enumerate}
\end{defn}

Let $\alpha\colon S \curvearrowright A$ be an action of an inverse semigroup $S$ on a C*-algebra $A$. For $s, t \in S$, define
\[
I_{s,t} \coloneqq \clsp\{ a \in A : a \in A_{v^*v} \text{ for some } v \in S \text{ with } v \le s, t \}.
\]

The properties of $I_{s,t}$ are investigated in detail in \cite[\S2]{ACaHMT}. In the following lemma we summarise the properties that we will use frequently.

\begin{lemma} \label{lem: I_st properties}
Let $\alpha\colon S \curvearrowright A$ be an action of an inverse semigroup $S$ on a C*-algebra $A$. Fix $s, t \in S$. Then
\begin{enumerate}
    \item  $I_{s,t}$ is an ideal of $A$;
    \item $I_{s,t} = I_{t,s} \subseteq A_{s^*s} \cap A_{t^*t}$ and $I_{s,s}=A_{s^*s}$;
    \item  if $e\in E(S)$, then $I_{e,s}=I_{e,s^*}$;
    \item $\alpha_s(a) = \alpha_t(a)$ for all $a \in I_{s,t}$.
\end{enumerate}
\end{lemma}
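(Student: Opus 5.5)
The plan is to reduce everything to the generating sets $A_{v^*v}$ with $v\le s,t$, and to two standard facts about the action. First, for $e\in E(S)$ the map $\alpha_e$ is the identity on $A_e:=A_{e^*e}$: $\alpha_e$ is an idempotent partial isomorphism, so $\alpha_e\circ\alpha_e=\alpha_e$ as partial maps, and an idempotent partial bijection is the identity on its domain. Second, if $v\le s$, say $v=ss^*v$, equivalently $v=s v^*v$, then $\alpha_v=\alpha_s\circ\alpha_{v^*v}$. Hence $A_{v^*v}=\dom(\alpha_v)\subseteq\dom(\alpha_{v^*v})\cap\alpha_{v^*v}^{-1}(\dom\alpha_s)$. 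Since $\alpha_{v^*v}$ is the identity on $A_{v^*v}$, this gives $A_{v^*v}\subseteq A_{s^*s}$ and $\alpha_v=\alpha_s$ on $A_{v^*v}$.

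For (1), I show that the generating set $\bigcup_{v\le s,t}A_{v^*v}$ is a union of ideals, so its closed span is a closed two-sided ideal. Each $A_{v^*v}$ is an ideal by Definition~\ref{defn: C*-algebra action}, and a closed linear span of ideals is again an ideal. Its adjoint-closedness is automatic.

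For (2), symmetry $I_{s,t}=I_{t,s}$ is immediate from the definition. The inclusion $I_{s,t}\subseteq A_{s^*s}\cap A_{t^*t}$ follows from the second fact: each $A_{v^*v}$ with $v\le s,t$ lies in both $A_{s^*s}$ and $A_{t^*t}$, and those are closed subspaces. For $I_{s,s}=A_{s^*s}$, take $v=s$ to get $\supseteq$; the inclusion $\subseteq$ was just shown.

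For (4), for $a\in A_{v^*v}$ with $v\le s,t$ the second fact gives $\alpha_s(a)=\alpha_v(a)=\alpha_t(a)$. Both $\alpha_s$ and $\alpha_t$ are linear and continuous on $I_{s,t}\subseteq A_{s^*s}\cap A_{t^*t}$, so the equality extends to the closed span.

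For (3), the key observation is that for $e$ idempotent, $v\le e$ forces $v$ to be idempotent. Indeed $v=ev^*v$ is a product of commuting idempotents. Then $v\le s$ iff $v=v^*v\,s$ iff $v=v s$, and taking adjoints, using $v=v^*$, this is iff $v=s^*v=s^*v v^*$, i.e. $v\le s^*$ (using the equivalent form $s\le t\iff s=ts^*s$). Hence the sets $\{v: v\le e,s\}$ and $\{v:v\le e,s^*\}$ coincide, and so $I_{e,s}=I_{e,s^*}$.

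The main obstacle is the partial-order bookkeeping: $\alpha_v=\alpha_s\alpha_{v^*v}$ and the manipulation in (3). Both need care about which of the two equivalent forms of $\le$ is used, and about domains of compositions in $\II(A)$.
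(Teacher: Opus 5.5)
Your proof is correct. The paper does not prove this lemma itself; it quotes these properties from \cite[\S2]{ACaHMT}, so there is no in-paper argument to compare routes against. Your argument is a complete self-contained verification. It reduces everything to the generating ideals $A_{v^*v}$ with $v\le s,t$, using two facts: $\alpha_e=\id$ on $A_e$, and $v=sv^*v$ gives $\alpha_v=\alpha_s\circ\alpha_{v^*v}$, hence $A_{v^*v}\subseteq A_{s^*s}$ and $\alpha_v=\alpha_s$ on $A_{v^*v}$. The continuity extension in (4) is justified because *-homomorphisms are contractive and (2) places $I_{s,t}$ in both domains. In (3), the observation that anything below an idempotent is idempotent (so $v\le s\iff v=v^*\le s^*$) is exactly what is needed.

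One slip to fix: you write ``$v\le s$, say $v=ss^*v$''. That condition is not equivalent to $v\le s$; in a group, $ss^*v=v$ holds for every $v$. You mean $v=vv^*s$. Since the rest of your argument only uses the correct equivalent form $v=sv^*v$, nothing is affected.
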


\subsection{Covariant representations and the maximal crossed product}

We   recall the construction of the
 universal $C^*$-algebra $A\rtimes_{\max} S$ associated to  $\alpha\colon S \curvearrowright A$  from, for example, \cite[Definition~4.4]{Sieben-JAMS-1997}, \cite[page~233]{Exel2008} and \cite[page~232]{BussExelMeyer2017}. While these constructions  complete different $*$-subalgebras, the resulting C*-algebras are the same.  
We start with the definition of covariant representation of $\alpha\colon S \curvearrowright A$ as in \cite[Definition~3.4]{Sieben-JAMS-1997} and \cite[Definition~9.4]{Exel2008}. 

\begin{defn}
\label{defn covariant pair}
Let $\alpha\colon S \curvearrowright A$ be an action of an inverse semigroup $S$ on a C*-algebra $A$. A \emph{covariant representation} of $\alpha\colon S \curvearrowright A$ on a Hilbert space $H$ is a pair $(\pi,\sigma)$, where $\pi\colon A\to B(H)$ is a non-degenerate representation  and $\sigma\colon S\to B(H)$ such that for all $s,t\in S$ and $e\in E(S)$ we have
\begin{enumerate}
\item \label{defn covariant pair 1}$\sigma_{st}=\sigma_s\sigma_t$ and 
$\sigma_{s^*}=\sigma_s^*$;
\item \label{defn covariant pair 2}$\pi(\alpha_s(a))=\sigma_s\pi(a)\sigma_{s^*}$ for $a\in A_{s^*s}$; and 
\item \label{defn covariant pair 3}
$\pi(A_e)H=\sigma_e(H)$  (note that both sides are closed; for the left-hand-side, see, for example,  \cite[Theorem~32.22 on page~268]{Hewitt-Ross-2}).
\end{enumerate}
\end{defn}

The following properties of covariant representations are used repeatedly in our proofs.

\begin{lemma}\label{lem cov rep basics}
    Let $\alpha\colon S \curvearrowright A$ be an action of an inverse semigroup $S$ on a C*-algebra $A$ and let $(\rho,\sigma)$ be a covariant representation.
    \begin{enumerate}
        \item\label{lem cov rep basics item 1} Let $e\in E(S)$ and $a\in A_e$. Then $\rho(a)\sigma(e)=\rho(a)=\sigma(e)\rho(a)$.
        \item\label{lem cov rep basics item 2} Let $s\in S$ and $a\in A_{s^*s}$. Then $\sigma(s)\rho(a)=\rho(\alpha_{s}(a))\sigma(s)$. 
    \end{enumerate}
\end{lemma}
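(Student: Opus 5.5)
For part (1), I start from condition (3) of the covariance definition, $\rho(A_e)H=\sigma_e(H)$. First I check that $\sigma_e$ is an orthogonal projection. Since $e=e^*$ and $e^2=e$, condition (1) gives $\sigma_e^*=\sigma_{e^*}=\sigma_e$ and $\sigma_e^2=\sigma_{e^2}=\sigma_e$.

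Now fix $a\in A_e$. Then $\rho(a)H\subseteq \rho(A_e)H=\sigma_e(H)$, and $\sigma_e$ acts as the identity on its range, so $\sigma_e\rho(a)=\rho(a)$. For the other side I take adjoints. Because $A_e$ is an ideal, hence $*$-closed, $a^*\in A_e$ as well, so $\sigma_e\rho(a^*)=\rho(a^*)$. Taking adjoints of this identity gives $\rho(a)\sigma_e=\rho(a)$.

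For part (2), fix $s\in S$ and $a\in A_{s^*s}$. I use the factorization $a=bc$ with $b,c\in A_{s^*s}$; this is available by Cohen factorization, or I can instead take $b$ from an approximate unit of the ideal $A_{s^*s}$ and pass to a limit. Condition (2) of the covariance definition gives
\[
\rho(\alpha_s(a))\sigma_s=\sigma_s\rho(a)\sigma_{s^*}\sigma_s=\sigma_s\rho(a)\sigma_{s^*s}.
\]
Since $s^*s\in E(S)$ and $a\in A_{s^*s}$, part (1) gives $\rho(a)\sigma_{s^*s}=\rho(a)$. Hence $\rho(\alpha_s(a))\sigma_s=\sigma_s\rho(a)$, which is the claim.

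This direct argument does not actually need the factorization. The only delicate point is confirming that $\sigma_e$ is a projection, so that "$\rho(a)H\subseteq\sigma_e(H)$" really yields $\sigma_e\rho(a)=\rho(a)$. That step follows from condition (1) alone, as shown above, so I expect no genuine obstacle in either part.
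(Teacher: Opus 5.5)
Your proof is correct. Part (2) is the same as the paper's argument: expand $\rho(\alpha_s(a))\sigma_s=\sigma_s\rho(a)\sigma_{s^*s}$ and absorb $\sigma_{s^*s}$ using part (1). The Cohen factorization aside is unnecessary, as you yourself note, and can be deleted.

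For part (1) you take a genuinely different route. You use one inclusion of the range condition (3), $\rho(A_e)H\subseteq\sigma_e(H)$, together with the fact that $\sigma_e$ is a self-adjoint idempotent, to get $\sigma_e\rho(a)=\rho(a)$. You then obtain $\rho(a)\sigma_e=\rho(a)$ by applying this to $a^*\in A_e$ and taking adjoints.

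The paper instead applies the covariance condition (2) to the idempotent $e$. Since $\alpha_e$ is an idempotent partial bijection, it is the identity on $A_e$, so $\sigma_e\rho(a)\sigma_e=\rho(\alpha_e(a))=\rho(a)$. Multiplying by $\sigma_e$ on the left or right and using $\sigma_e^2=\sigma_e$ gives both identities.

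The trade-off is this. The paper's argument never uses condition (3) or the self-adjointness of $\sigma_e$, so it holds for any pair satisfying only conditions (1) and (2). Yours does not use condition (2) in part (1) and avoids the implicit observation that $\alpha_e=\mathrm{id}_{A_e}$. The cost is that you need condition (3) and $\sigma_e^*=\sigma_e$.
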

\begin{proof}
Since $\alpha_e(a)=a$ and $(\rho, \sigma)$ is covariant, we have $\sigma(e)\rho(a)\sigma(e)=\rho(\alpha_e(a))=\rho(a)$. Since  $\sigma(e)^2=\sigma(e)$ we have 
$\rho(a)\sigma(e)=\sigma(e)\rho(a)\sigma(e)^2=\rho(a)$. 
Similarly $\sigma(e)\rho(a)=\rho(a)$.

Now $\rho(\alpha_{s}(a))\sigma(s)=\sigma(s)\rho(a)\sigma(s^*)\sigma(s)=\sigma(s)\rho(a)\sigma(s^*s)=\sigma(s)\rho(a)$. 
\end{proof}

 We consider the vector space of functions
\[
C_c(S,A)\coloneqq \{ f\colon S \to A \,:\, f(s) \in A_{ss^*} \text{ and } f(s) = 0 \text{ for all but finitely many } s \in S \}.
\]
(We use the notation $C_c$ to denote continuous compactly supported functions, but since the domain $S$ is discrete, these are actually finitely supported functions.) We equip $C_c(S,A)$ with pointwise addition, scalar multiplication, and multiplication and adjoint given by convolution and involution, respectively: for $f, g \in C_c(S,A)$ and $s \in S$,
\[
(f * g)(s) \coloneqq \sum_{\substack{t,u \in S, \\ s = tu}} \alpha_t\big(\alpha_{t^*}(f(t)) \, g(u) \big) \quad \text{ and } \quad f^*(s) \coloneqq \alpha_s\big(f(s^*)^*\big).
\]
For $s \in S$ and $a_s \in A_{ss^*}$, we define $a_s\delta_s\colon S \to A$ by
\[
a_s\delta_s(t) \coloneqq \begin{cases} a_s & \text{if } t = s \\ 0 & \text{if } t \ne s. \end{cases}
\]
Then for $s, t \in S$, $a_s \in A_{ss^*}$, and $b_t \in A_{tt^*}$, we have
\[
(a_s\delta_s) * (b_t\delta_t) = \alpha_s\big(\alpha_{s^*}(a_s) \, b_t\big) \delta_{st} \quad \text{ and } \quad (a_s\delta_s)^* = \alpha_{s^*}(a_s^*) \delta_{s^*}.
\]

A covariant representation $(\pi, \sigma)$ of $\alpha\colon S \curvearrowright A$ on a Hilbert space $H$ gives rise to a non-degenerate representation $\pi\times\sigma$ of $C_c(S,A)$,  called the \emph{integrated form of $(\pi, \sigma)$},  such that 
\begin{equation}\label{eq cov rep}
\pi\times\sigma \big(\sum a_s\delta_s\big)=\sum\pi(a_s)\sigma_s.
\end{equation}
 
Let $s, t \in S$ with $s \le t$ and $a \in A_{ss^*}$. Then $s^* \le t^*$ and so $A_{ss^*} \subseteq A_{tt^*}$. Now  $a\delta_s$ and $a\delta_t$ are both elements of $C_c(S,A)$ and it is natural to identify $a\delta_s$ with $a\delta_t$ because $a\delta_s-a\delta_t$ is in the kernel of the integrated form of every covariant representation.  To better understand these kernels, we  consider the ideals 
\begin{align*}
\JJ_\alpha&\coloneq \vecspan\{a\delta_s-a\delta_v:v,s\in S, v\le s, a\in A_{vv^*}\} \text{\ and }\\
\II_\alpha&\coloneq\vecspan\{a\delta_s-a\delta_r:s,r\in S, a\in I_{s^*,r^*}\}
\end{align*}
of $C_c(S,A)$. 
The relations generating $\JJ_\alpha$ were first introduced  in \cite[Lemma~4.5]{Sieben-JAMS-1997} and  $\II_\alpha$ was introduced in \cite[\S2]{BussExelMeyer2017} for an action of $S$ on $A$ by Hilbert modules. It follows from \cite[Proposition~2.9]{BussExelMeyer2017} that $\II_\alpha$ is contained in  the kernel of the integrated form of every covariant representation; because the notation in \cite{BussExelMeyer2017} differs from ours we prove this in our setting in  the next lemma. 

\begin{lemma}
\label{lem: I_alpha and cov reps}
    Let $\alpha\colon S \curvearrowright A$ be an action of an inverse semigroup $S$ on a C*-algebra $A$. Let $(\rho,\sigma)$ be a covariant representation on a Hilbert space $H$. Then \[\JJ_\alpha\subseteq \II_\alpha\subseteq \ker (\rho\times \sigma).\]
\end{lemma}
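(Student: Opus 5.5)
First, $\JJ_\alpha\subseteq\II_\alpha$ follows directly from the definitions. Take $v\le s$ and $a\in A_{vv^*}$. Then $v^*\le s^*$ and $v^*\le v^*$. So by definition of $I_{s^*,v^*}$ (using $v^*$ itself as the common lower bound, whose domain ideal is $A_{(v^*)^*v^*}=A_{vv^*}$), we get $a\in I_{s^*,v^*}$. Hence each generator $a\delta_s-a\delta_v$ of $\JJ_\alpha$ is a generator of $\II_\alpha$, taking $r=v$.

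For $\II_\alpha\subseteq\ker(\rho\times\sigma)$, fix $s,r\in S$ and $a\in I_{s^*,r^*}$. We must show $\rho(a)\sigma_s=\rho(a)\sigma_r$. Both sides are continuous and linear in $a$, and $I_{s^*,r^*}$ is the closed span of the ideals $A_{v^*v}$ with $v\le s^*,r^*$. So it suffices to treat $a\in A_{v^*v}$ for a single such $v$. Put $w=v^*$; then $w\le s$, $w\le r$, and $a\in A_{ww^*}$.

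Let $e=ww^*\in E(S)$. By Lemma~\ref{lem cov rep basics}\eqref{lem cov rep basics item 1}, $\rho(a)=\rho(a)\sigma_e$, and therefore
\[
\rho(a)\sigma_s=\rho(a)\sigma_{ww^*}\sigma_s=\rho(a)\sigma_{ww^*s}.
\]
Since $w\le s$ means $w=ww^*s$, this equals $\rho(a)\sigma_w$. The same computation with $r$ gives $\rho(a)\sigma_r=\rho(a)\sigma_w$. Hence $\rho\times\sigma(a\delta_s-a\delta_r)=0$, and the inclusion follows by linearity.

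The only delicate point is the reduction from $I_{s^*,r^*}$ to a single $A_{v^*v}$, which needs the linearity and continuity of $a\mapsto\rho(a)\sigma_s$. The rest is the idempotent absorption in Lemma~\ref{lem cov rep basics}, together with keeping track of stars when translating $v\le s^*$ into $v^*\le s$.
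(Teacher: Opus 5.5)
Your proof is correct and follows essentially the same route as the paper: both arguments reduce, by linearity and continuity, to a single element of some $A_{v^*v}$ with $v\le s^*,r^*$, and then absorb the idempotent $v^*v$ to turn both $\sigma_s$ and $\sigma_r$ into $\sigma_{v^*}$. The paper absorbs the idempotent through the covariance identity $\rho(a)=\sigma(v^*v)\rho(a)\sigma(v^*v)$, whereas you cite Lemma~\ref{lem cov rep basics}; for the first inclusion you use only $A_{vv^*}\subseteq I_{s^*,v^*}$, which suffices, while the paper states the equality $I_{s^*,v^*}=A_{vv^*}$.
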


\begin{proof}
   If $v,s\in S$ such that $v\le s$ , then  $ I_{s^*,v^*}=A_{vv^*}$. Thus $\JJ_\alpha\subseteq \II_\alpha$.

Next, fix a spanning element of $\II_\alpha$, that is, $a\delta_s-a\delta_r$  where $s,r\in S$ and $a\in I_{s^*,r^*}$.  By definition of $I_{s^*,r^*}$, we have  $a=\lim_{n\to\infty}b_n$, where each $b_n$ is a finite sum  $b_n=\sum_{u\in F_n} a_{u}$ where $u\le s^*,r^*$ and $a_{u}\in A_{u^*u}$.
Now 
\begin{align*}\rho\times\sigma(a\delta_s-a\delta_r)&=\rho(\lim_{n\to\infty}b_n)(\sigma(s)-\sigma(r))=\lim_{n\to\infty}\rho(b_n)(\sigma(s)-\sigma(r))
\end{align*}
where each \[\rho(b_n)(\sigma(s)-\sigma(r))=\sum_{u\in F_n} \rho(a_{u})(\sigma(s)-\sigma(r)).\]
Now we consider the summands $\rho(a_{u})(\sigma(s)-\sigma(r))$ where $u\in F_n$ and $u\leq s^*, r^*$.
 Recall that $u\le s^*,r^*$ if and only if $u^*\le s,r$.  Thus $u^*us=u^*=u^*ur$ and using the properties of a covariant representation we obtain
    \begin{align*}
        \rho(a_{u})(\sigma(s)-\sigma(r))&=\rho(\alpha_{u^*u}(a_{u}))(\sigma(s)-\sigma(r))
        \\
        &=\sigma(u^*u)\rho(a_{u})\sigma(u^*u)(\sigma(s)-\sigma(r))
        \\
        &=\sigma(u^*u)\rho(a_{u})(\sigma(u^*us)-\sigma(u^*ur))
        \\
        &=\sigma(u^*u)\rho(a_{u})(\sigma(u^*)-\sigma(u^*))
        =0
    \end{align*}
Thus each $\rho(b_n)(\sigma(s)-\sigma(r))=0$ and hence $\rho\times\sigma(a\delta_s-a\delta_r)=0$. 
It follows that $\II_\alpha\subseteq \ker (\rho\times\sigma)$.
\end{proof}

Indeed, we will see in \Cref{prop: reduced norm is a norm} below that $\II_\alpha=\cap\{\ker(\rho\times \sigma): (\rho,\sigma)\text{  is covariant}\}$.

\begin{defn}\label{defn: algebraic crossed product}
    Let $\alpha\colon S \curvearrowright A$ be an action of an inverse semigroup $S$ on a C*-algebra $A$. The \emph{algebraic crossed product} of $(A,S,\alpha)$ is  \[A\rtimes_\alg S\coloneq C_c(S,A)/_{\II_\alpha}.\]
\end{defn}

By \Cref{lem: I_alpha and cov reps}, the integrated form of every covariant representation factors through the algebraic crossed product.  

\begin{defn}
    Let $\alpha\colon S \curvearrowright A$ be an action of an inverse semigroup $S$ on a C*-algebra $A$.  For $a\in A\rtimes_\alg S$ define 
    \[\|a\|_\full=\sup\{\|\rho\times\sigma(a)\|: \rho\times\sigma \text{ is a covariant representation\}}. \]
    The \emph{maximal crossed product} $A\rtimes_\full S$ of $\alpha\colon S \curvearrowright A$ is the  C*-algebraic (hence Hausdorff) completion of $A\rtimes_\alg S$  with respect to $\|\cdot\|_\full$. 
\end{defn}

For every covariant representation  $(\rho,\sigma)$ on a Hilbert space $H$ there exists a representation $\rho\times \sigma\colon A\rtimes_\full S \to B(H)$ characterised by the formula  at \eqref{eq cov rep}. 
Furthermore, by \cite[Theorem~9.7]{Exel2008}, 
$(\rho,\sigma)\mapsto\rho\times\sigma$ is a bijection between the covariant representations of $\alpha\colon S \curvearrowright A$   
and the non-degenerate representations 
of  $A\rtimes_\full S$.

If $S$ is unital, 
then $A$ embeds in $C_c(S,A)$ and $A\rtimes_\alg S$ via $a\mapsto a\delta_1$. If $S$ is non-unital, then $A$ does not embed in $A\rtimes_\alg S$, but  $A$ still  embeds in $A\rtimes_\full S$. As noted in \cite{Exel2008,BussExelMeyer2017,BussMartinez2023}, for example,  we can adjoin an identity to $S$ without changing the C*-algebra. We now show how this works directly.

\begin{lemma}\label{lem A embeds}
    Let $\alpha\colon S \curvearrowright A$ be an action of an inverse semigroup $S$ on a C*-algebra $A$. Let $\tilde{S}\coloneq S\cup\{1\}$ and extend $\alpha$ to $\tilde\alpha$ by setting $\tilde\alpha_1=\id$.
    Then  $A\rtimes_\full S$ and $ A\rtimes_\full \tilde{S}$ are isomorphic and the map $a_s\mapsto a_s\delta_{s^*s}$, where $s\in S$ and  $a_s\in A_{ss^*}$, extends to an embedding of $A$ in $A\rtimes_\full S$.
\end{lemma}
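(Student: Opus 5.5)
Both claims follow from identifying covariant representations of $S$ and of $\tilde S$, together with the fact that integrated forms are determined by their values on the elements $a_s\delta_s$. Note first that $\tilde\alpha$ is an action of $\tilde S$ on $A$: $\tilde\alpha_1=\id$ has domain $A$, which is an ideal, and the homomorphism property is clear since $1$ acts as the identity.

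The central step is a bijection between covariant representations. Given a covariant representation $(\rho,\sigma)$ of $\alpha$ on $H$, set $\tilde\sigma_1=I$ and $\tilde\sigma|_S=\sigma$. Conditions (1) and (2) of \Cref{defn covariant pair} for $\tilde S$ are immediate. For (3) with $e=1$ we need $\rho(A)H=H$, which holds because $\rho$ is non-degenerate. Conversely, restricting a covariant representation of $\tilde\alpha$ to $S$ gives a covariant representation of $\alpha$. Moreover, any covariant $(\rho,\tilde\sigma)$ for $\tilde S$ must satisfy $\tilde\sigma_1=I$, since (3) gives $\tilde\sigma_1(H)=\rho(A)H=H$ and $\tilde\sigma_1$ is a projection. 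So the two sets of covariant representations correspond bijectively.

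Next, use the inclusion $C_c(S,A)\hookrightarrow C_c(\tilde S,A)$, which is a $*$-homomorphism because $S$ is a subsemigroup. It descends to a $*$-homomorphism $\iota\colon A\rtimes_\alg S\to A\rtimes_\alg\tilde S$, since the generators of $\II_\alpha$ lie in $\II_{\tilde\alpha}$: the ideal $I_{s,t}$ computed in $\tilde S$ contains the one computed in $S$. By the bijection above, $\|\iota(f)\|_\full=\|f\|_\full$ for all $f$, so $\iota$ extends to an isometric homomorphism $A\rtimes_\full S\to A\rtimes_\full\tilde S$.

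The main obstacle is surjectivity, that is, showing $a\delta_1$ lies in the image for every $a\in A$. By condition (2) of \Cref{defn: C*-algebra action} and continuity of $a\mapsto a\delta_1$ (which is contractive, since $\|\rho\times\tilde\sigma(a\delta_1)\|=\|\rho(a)\|\le\|a\|$), it suffices to treat $a\in A_{s^*s}$. For such $a$, the element $e=s^*s$ satisfies $e\le 1$ in $\tilde S$, so $a\delta_1-a\delta_e\in\JJ_{\tilde\alpha}\subseteq\II_{\tilde\alpha}$ by \Cref{lem: I_alpha and cov reps}. Hence $a\delta_1=\iota(a\delta_{e})$ in $A\rtimes_\alg\tilde S$. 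Since the image of $\iota$ is closed and contains a dense subset of $\{a\delta_1 : a\in A\}$, it contains all $a\delta_1$. Because $a_t\delta_t=(a_t\delta_1)*(b\delta_t)$-type factorizations are unnecessary here (every $a_t\delta_t$ with $t\in S$ is already in the image), this gives surjectivity, and $\iota$ is an isomorphism.

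Finally, the embedding of $A$. The map $a\mapsto a\delta_1$ is a $*$-homomorphism $A\to A\rtimes_\full\tilde S$. It is injective: take $\pi$ a faithful non-degenerate representation of $A$ and $\pi\rtimes$-type covariant representations coming from the paper's family; more simply, the universal representation of $A\rtimes_\full\tilde S$ contains covariant pairs $(\rho,\tilde\sigma)$ with $\rho$ faithful. This exists because \Cref{prop: reduced norm is a norm} will supply such pairs, or because the direct sum of all covariant pairs restricted to $A$ is isometric. I flag this injectivity as the one point needing an existence input for a covariant representation with faithful $\rho$. Composing with $\iota^{-1}$, the image of $a_s\in A_{s^*s}$ is $a_s\delta_{s^*s}$, as computed above; the subscripts as stated should be read with $a_s\in A_{s^*s}$ so that $a_s\delta_{s^*s}\in C_c(S,A)$. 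The map is therefore the asserted embedding.
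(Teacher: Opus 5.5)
Your proposal is correct and follows essentially the paper's route. Both arguments build an isometric comparison map $A\rtimes_\full S\to A\rtimes_\full\tilde S$ from restriction of covariant representations. Both get surjectivity from density of $\vecspan\{A_{s^*s}:s\in S\}$ together with $a\delta_1\equiv a\delta_e$ for $a\in A_e$; the paper uses the equivalent operator identity $\rho(b)=\rho(b)\sigma(e)$. Both then compose with $a\mapsto a\delta_1$. Your explicit extension $\tilde\sigma_1=I$ supplies the step behind the paper's terse claim that $\rho\times\sigma|_S$ is isometric.

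The injectivity of $a\mapsto a\delta_1$ that you flag is also only asserted in the paper. It follows from the regular representations of $\tilde\alpha$: since $\sigma_\pi(1)=I$ and $\rho_\pi$ acts as $\pi$ on functions supported at $[1,\pi]$, one gets $\|a\delta_1\|_\full\ge\sup_{\pi\in\hat A}\|\pi(a)\|=\|a\|$.
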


\begin{proof}
Let $\Psi\colon A\rtimes_\full \tilde{S}\to B(H)$ be a faithful representation. By \cite[Theorem~9.7]{Exel2008} there exists a covariant representation $(\rho,\sigma)$ of $\tilde{\alpha}\colon \tilde{S}\curvearrowright A$ such that $\Psi=\rho\times \sigma$. Then $(\rho, \sigma|_S)$ is a covariant representation of $\alpha\colon S\curvearrowright A$ and $\rho\times\sigma|_S$ is isometric because $\rho\times\sigma$ is. 
Thus $\Phi\coloneq \Psi^{-1}\circ\rho\times \sigma|_S:A\rtimes_\full S\to A\rtimes_\full \tilde{S}$ is isometric and it suffices to show that it is onto. For this we show that the ranges of $\rho\times\sigma|_S$ and $\rho\times\sigma$ coincide. 
Since $\rho\times\sigma(A\rtimes_\full \tilde S)$ is generated by elements $\rho(a)\tilde\sigma(s)$ where $s\in\tilde S$ and $a\in A_{s^*s}$, it suffices to show that $\rho(a)\tilde\sigma(1)=\rho(a)\in \rho\times\sigma(A\rtimes_\full S)$ for $a\in A$.

Fix $a\in A$. Since the linear span of the ideals $A_{s^*s}$ is dense in $A$, $a$ is the limit of finite sums of the form $\sum_{s\in F_n} b_s$ where $b_s\in A_{ss^*}$.
By \Cref{lem cov rep basics} we have $\rho(b_s)=\rho(b_s)\sigma(ss^*)$ and thus
\[\rho(a)=\lim_{n\to\infty} \rho\Big(\sum_{s\in F_n} b_s\Big)=\lim_{n\to\infty} \sum_{s\in F_n} \rho(b_s)=\lim_{n\to\infty} \sum_{s\in F_n} \rho(b_s)\sigma(ss^*) \in \rho\times\sigma(A\rtimes_\full S).
\]
Thus $\Phi$ is an isomorphism as claimed, and for   $s\in S$ and $a_s\in A_{ss^*}$ we have \[\Phi(a_s\delta_s)=\Psi^{-1}\circ(\rho\times \sigma|_S)(a_s\delta_s)=\Psi^{-1}(\rho(a_s)\sigma(s))=a_s\delta_s.\]

Finally, the map $\iota\colon a\mapsto a\delta_1$ extends to an embedding  $A$ in  $A\rtimes_\full \tilde{S}$. Now $\Phi^{-1}\circ\iota$ is an embedding. Using \Cref{lem cov rep basics} again we have  $\Phi^{-1}\circ \iota(a_s)=\Phi^{-1}(a_s\delta_1)=\Phi^{-1}(a_s\delta_{ss^*})=a_s\delta_{ss^*}$
\end{proof}

\subsection{An induced action of the semigroup on the spectrum}
We will show that an action of an inverse semigroup $S$ on a C*-algebra $A$ induces an action on the spectrum $\hat{A}$.

\begin{defn} Let $X$ be a topological space. 
    An \emph{action} $\beta\colon S\curvearrowright X$ is an inverse semigroup homomorphism $\beta:S\to \II(X)$ such that 
    \begin{enumerate}
        \item for each $s\in S$, $\beta_s$ is a homeomorphism and its domain $U_{s^*s}$ is an open set of $X$; and
        \item $\cup_{s \in S} \, U_{s^*s}=X$. 
    \end{enumerate}
\end{defn}

\begin{prop}\label{prop: induced action on spectrum}
Let $\alpha\colon S \curvearrowright A$ be an action of an inverse semigroup $S$ on a C*-algebra $A$.
For  $s\in S$,   let
 \[U_{s^*s}\coloneqq\{\pi\in \hat{A}: \pi|_{A_{s^*s}}\neq 0\}\quad 
 \text{and}\quad   \beta_s(\pi)\coloneqq\overline{\pi\circ\alpha_{s^*}}.\]
Then  each 
 $\beta_s\colon U_{s^*s}\to U_{ss^*}$ 
is a homeomorphism, and  $\beta\colon S\curvearrowright \hat{A}$ is an action. 
\end{prop}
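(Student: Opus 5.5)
The plan is to build everything from the ideal-spectrum homeomorphism recalled in the preliminaries. Fix $s\in S$.

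\textbf{Step 1: $\beta_s$ is a well-defined homeomorphism.} By the preliminaries, $U_{s^*s}$ is open in $\hat A$ and $\pi\mapsto\pi|_{A_{s^*s}}$ is a homeomorphism $U_{s^*s}\to\widehat{A_{s^*s}}$, with inverse $\tau\mapsto\bar\tau$. The isomorphism $\alpha_{s^*}\colon A_{ss^*}\to A_{s^*s}$ induces a homeomorphism $\widehat{A_{s^*s}}\to\widehat{A_{ss^*}}$, $\tau\mapsto\tau\circ\alpha_{s^*}$; it preserves irreducibility and unitary equivalence, and it is a homeomorphism because isomorphisms carry primitive ideals to primitive ideals and so preserve the hull-kernel topology. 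The map $\beta_s$ is the composite
\[U_{s^*s}\to\widehat{A_{s^*s}}\to\widehat{A_{ss^*}}\to U_{ss^*},\]
so it is a homeomorphism. Its inverse is $\beta_{s^*}$, since $\overline{(\pi\circ\alpha_{s^*})}|_{A_{ss^*}}\circ\alpha_s=\pi|_{A_{s^*s}}$. In what follows I read $\pi\circ\alpha_{s^*}$ as $\pi|_{A_{s^*s}}\circ\alpha_{s^*}$.

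\textbf{Step 2: the domains cover $\hat A$.} Let $\pi\in\hat A$. If $\pi$ vanished on every $A_{s^*s}$, it would vanish on their dense span, hence on $A$, which is impossible. So $\pi\in U_{s^*s}$ for some $s$.

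\textbf{Step 3: $\beta$ is a homomorphism into $\II(\hat A)$.} This is the main obstacle. I must show $\beta_{st}=\beta_s\circ\beta_t$, including equality of domains. For idempotents $e,f$ I first need $A_eA_f=A_e\cap A_f=A_{ef}$. Since $\alpha$ is a homomorphism into $\II(A)$, $\alpha_{ef}=\alpha_e\circ\alpha_f$ is the identity on $A_e\cap A_f$, and intersections of ideals equal their products. Because irreducible representations have prime kernels, this gives
\[U_e\cap U_f=U_{ef}.\]
Next I need $\beta_t(U_{s^*s}\cap U_{tt^*})=U_{(st)^*(st)}$ pulled back appropriately. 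For this I use $\alpha_{t^*}(A_{s^*s}\cap A_{tt^*})=A_{t^*s^*st}$, which comes from the homomorphism property: the domain of $\alpha_s\circ\alpha_t$ is the domain of $\alpha_{st}$. Together with the fact that $\beta_{t}$ is compatible with the restriction identification, this shows that $\pi\in\dom(\beta_{st})$ iff $\pi\in U_{t^*t}$ and $\beta_t(\pi)\in U_{s^*s}$.

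On the common domain, take $\pi$ with nonzero restriction to $I=A_{(st)^*st}$. Both $\beta_{st}(\pi)$ and $\beta_s(\beta_t(\pi))$ are irreducible representations that are nonzero on the ideal $\alpha_{st}(I)$. There they agree with $\pi\circ\alpha_{(st)^*}=\pi\circ\alpha_{t^*}\circ\alpha_{s^*}$, using uniqueness of extensions from an ideal and the fact that extensions restrict correctly. By the uniqueness in the restriction homeomorphism, they are equal.

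Compatibility with inverses follows from Step 1. The remaining bookkeeping, namely checking that restriction to smaller ideals commutes with extension, is routine but fiddly. It again relies only on the uniqueness of extensions from ideals on which the representation is non-degenerate, which holds because irreducible representations nonzero on an ideal are non-degenerate there.
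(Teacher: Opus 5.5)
Your proposal is correct and follows essentially the same route as the paper. Both factor $\beta_s$ as restriction to $A_{s^*s}$, then composition with $\alpha_{s^*}$, then unique extension from $A_{ss^*}$; both match the domains of $\beta_s\circ\beta_t$ and $\beta_{st}$ via $\alpha_{t^*}(A_{tt^*}\cap A_{s^*s})=A_{(st)^*st}$, and both get $\beta_s\circ\beta_t=\beta_{st}$ from uniqueness of extensions from ideals. The differences are cosmetic: you cite the general fact that an isomorphism induces a homeomorphism of spectra, where the paper checks continuity of $\tau\mapsto\tau\circ\alpha_{s^*}$ directly on basic open sets, and you state explicitly (via prime kernels) the identity $U_e\cap U_f=U_{ef}$ that the paper uses tacitly.
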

    \begin{proof} Let $s\in S$. 
        Then $U_{s^*s}$ is an open subset of $\hat{A}$ homeomorphic to $\hat A_{s^*s}$.
        Since $\alpha$ is an action, $\vecspan\{A_{s^*s}:s\in S\}$ is dense in $A$, which implies that $\cup_{s \in S} \, U_{s^*s}=\hat{A}$. 

        Let $\pi\in U_{s^*s}$.  Since $\alpha_{s^*}\colon A_{ss^*}\to A_{s^*s}$ is an isomorphism, $0\neq \pi\circ \alpha_{s^*}$ is an irreducible representation of $A_{s^*s}$, and its unique extension $\overline{\pi\circ \alpha_{s^*}}$ to $A$ is irreducible.
        Thus  $\beta_s(\pi )\in U_{ss^*}$ and  is well-defined. 

        Notice that $\overline{\pi\circ\alpha_{s^*}}\circ \alpha_s=\pi\circ\alpha_{s^*}\circ\alpha_s$. Thus  
        \[\beta_{s^*}\circ \beta_s(\pi)=\overline{\overline{\pi\circ\alpha_{s^*}}\circ \alpha_s}=\overline{\pi\circ\alpha_{s^*}\circ\alpha_s}=\overline{\pi|_{A_{ss^*}}}=\pi.\]
        Similarly, $\beta_s\circ \beta_{s^*}=\id$, and hence $\beta_s$ is a bijection with inverse $\beta_{s^*}$.

        To see that $\beta_s$ is continuous, let $h_s\colon U_{s^*s}\to\hat A_{s^*s}$ be the homeomorphism 
        $\pi\mapsto \pi|_{A_{s^*s}}$. Notice that $\beta_s(\pi)=h_{s^*}\inv\big(h_s(\pi)\circ \alpha_{s^*} \big)$. Thus it suffices to show that $\rho\mapsto \rho\circ\alpha_{s^*}$ is continuous from $\hat A_{s^*s}$ to $\hat A_{ss^*}$. Suppose that $\rho_n\to \rho$ in $\hat A_{s^*s}$ and let $V$ be a neighbourhood of $\rho\circ\alpha_{s^*}$.  Say $V=\{\pi\in\hat A_{ss^*}:\pi|_J\neq 0\}$ where $J$ is an ideal in $A_{ss^*}$. Then $\alpha_{s^*}(J)$ is an ideal of $A_{s^*s}$ and $V'=\{\pi\in\hat A_{s^*s}:\pi|_{\alpha_{s^*}(J)}\neq 0\}$ is a neighbourhood of $\rho$. Thus $\rho_n\in V'$ eventually, and then $\rho_n \circ\alpha_{s^*}\in V$ eventually. Thus $\rho\mapsto \rho\circ\alpha_{s^*}$ is continuous, and hence so is $\beta_s$. By symmetry, $\beta_s\inv=\beta_{s^*}$ is also continuous, and hence $\beta_s$ is a homeomorphism.

        To see that $\beta_s\circ\beta_t=\beta_{st}$, we first check that the domains match up.
        Since $\alpha_s\circ\alpha_t=\alpha_{st}$,  we have $\alpha_{t}\inv (A_{tt^*}\cap A_{s^*s})=A_{(st)^*st}$. Thus
       \begin{align*}
            \beta_t^{-1}(U_{tt^*}\cap U_{s^*s})
            &=\{\pi\in\hat{A}:\pi\circ\alpha_{t^*}|_{A_{tt^*}\cap A_{s^*s}}\ne 0\}
             =\{\pi\in\hat{A}:\pi|_{\alpha_{t^*}(A_{tt^*}\cap A_{s^*s})}\ne 0\}\\
             &=\{\pi\in\hat{A}:\pi|_{A_{(st)^*st}}\ne 0\}
             =U_{(st)^*st}.
        \end{align*}
        Finally, \[\beta_s(\beta_t(\pi))=\overline{\beta_t(\pi)\circ\alpha_{s^*}}=\overline{\overline{(\pi\circ\alpha_{t^*})}\circ\alpha_{s^*}}=\overline{\pi\circ\alpha_{t^*s^*}}=\overline{\pi\circ\alpha_{(st)^*}}=\beta_{st}(\pi),\] as needed. 
        Thus $\beta$ is an action. 
        \end{proof}

\begin{notation} Let $I$ be an ideal of a C*-algebra $A$.
Throughout this paper, we 
identify the open subset $U_I\coloneqq \{\pi\in\hat A: \pi|_{I}\ne 0\}$ of $\hat A$ with $\hat I$, and we will write $\hat I$ for $U_I$.  In particular, when  $\alpha\colon S \curvearrowright A$ is an action of a semigroup $S$ on $A$ and $\beta$ is the induced action on $\hat A$ of \Cref{prop: induced action on spectrum}, we write $\beta_s\colon \hat{A}_{s^*s}\to \hat{A}_{ss^*}$ instead of $\beta_s\colon U_{s^*s}\to U_{ss^*}$.
We do this to highlight the connection between the open sets of $\hat{A}$ and the ideals coming from the original action $\alpha$.
\end{notation}

\section{A family of regular representations}\label{sec reg reps}

Let $\alpha\colon S \curvearrowright A$ be an action of an inverse semigroup $S$ on a C*-algebra $A$ and let $\beta\colon S \curvearrowright \hat A$ be the induced action of $S$ on the spectrum $\hat A$ of $A$ from \cref{prop: induced action on spectrum}.   Using $\beta$, we now construct a groupoid of germs $G(\hat{A},S,\beta)$ with unit space homeomorphic to (the possibly non-Hausdorff) $\hat A$. 

Let \[\Omega\coloneqq\{(s,\pi):s\in S, \pi\in \hat{A}_{s^*s}\}.\] We define a relation $\sim$ on $\Omega$ by $(s,\pi)\sim (t,\eta)$ if and only if $\pi=\eta$  and there exists $e\in E(S)$ such that $\pi\in \hat{A}_e$ and $se=te$. It is straightforward to see $\sim$ is an equivalence relation. For $(s,\pi)\in \Omega$, we write $[s,\pi]$ for the equivalence class of $(s,\pi)$; this class is called  a \emph{germ} in the literature.
We will often use the following characterisation in our calculations.

\begin{lemma}\label{lem: equivalence in omega}
\begin{enumerate}
\item Let $(s,\pi), (t, \eta)\in \Omega$. Then $(s,\pi)\sim (t, \eta)$ if and only if $\pi=\eta\in  \hat{I}_{s,t}$. 
\item Let $e,e'\in E(S)$ and $(e,\pi), (e',\pi)\in\Omega$. Then $(e,\pi)\sim (e',\pi)$.
\end{enumerate}
\end{lemma}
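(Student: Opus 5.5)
For part (1), I will prove the two implications separately, working directly from the definition of $I_{s,t}$ and the identification of $\hat I$ with the open set $\{\pi\in\hat A:\pi|_I\neq0\}$.

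\emph{Forward direction.} Suppose $(s,\pi)\sim(t,\eta)$. Then $\pi=\eta$, and there is $e\in E(S)$ with $\pi\in\hat A_e$ and $se=te$. Put $v\coloneqq se=te$. Then $v\le s$ and $v\le t$, since for instance $ss^*v=ss^*se=se=v$.

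Next I compare domains. We have $v^*v=es^*se=s^*se$, so $A_{v^*v}=A_{s^*s}\cap A_e$, because the domain of $\alpha_{s^*s}\alpha_e$ is $A_{s^*s}\cap A_e$. Both $\pi\in\hat A_{s^*s}$ and $\pi\in\hat A_e$ hold. The intersection of two ideals is their product, so $\pi|_{A_{s^*s}A_e}\neq0$, since $\pi$ is nonzero on each ideal and irreducible representations are prime. Hence $\pi\in\hat A_{v^*v}$. Finally $A_{v^*v}\subseteq I_{s,t}$, so $\pi\in\hat I_{s,t}$.

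\emph{Converse.} Suppose $\pi\in\hat I_{s,t}$. By definition, $I_{s,t}$ is the closed span of the ideals $A_{v^*v}$ over $v\le s,t$. If $\pi$ vanished on every such $A_{v^*v}$, it would vanish on $I_{s,t}$. So there is some $v\le s,t$ with $\pi\in\hat A_{v^*v}$.

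Take $e\coloneqq v^*v$. Then $se=sv^*v=v$, using $v\le s\iff v=sv^*v$. Likewise $te=v$, so $se=te$, and $\pi\in\hat A_e$. Since $\pi\in\hat A_{s^*s}$ and $\pi\in\hat A_{t^*t}$ (both follow from $I_{s,t}\subseteq A_{s^*s}\cap A_{t^*t}$ by \Cref{lem: I_st properties}), both pairs lie in $\Omega$. Therefore $(s,\pi)\sim(t,\pi)$.

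For part (2), I apply (1). It suffices to show $\pi\in\hat I_{e,e'}$. Set $f\coloneqq ee'$, so $f\le e$ and $f\le e'$. Then $A_{f}=A_e\cap A_{e'}$, and $\pi$ is nonzero on this ideal by the same primeness argument as above. Hence $\pi\in\hat A_f\subseteq\hat I_{e,e'}$. The same conclusion also follows directly from the definition of $\sim$ with $e''=ee'$, since $e\,ee'=e'\,ee'$.

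The only genuinely non-formal step is the fact that an irreducible representation nonzero on two ideals is nonzero on their intersection. I would cite primeness of kernels of irreducible representations for this, in the form $IJ=I\cap J\not\subseteq\ker\pi$.
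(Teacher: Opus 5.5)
Your proof is correct and follows the paper's argument essentially verbatim: $v=se$ in the forward direction, $e=v^*v$ for the converse, and $e(ee')=e'(ee')$ for part (2). Your primeness argument simply makes explicit the identity $\hat A_e\cap\hat A_{s^*s}=\hat A_{es^*s}$, which the paper uses without comment.

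There is one slip to fix. The computation $ss^*v=v$ only shows $vv^*\le ss^*$, not $v\le s$; in a group it holds for every $v$. The correct check is $vv^*s=ses^*s=ss^*se=se=v$ (equivalently $sv^*v=v$), using that idempotents commute, and the same computation with $t$ gives $v\le t$.
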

\begin{proof}
    First, suppose that $(s,\pi)\sim (t,\eta)$. Then $\pi=\eta$ and there exists $e\in E(S)$ such that $\pi\in \hat{A}_e$ and $se=te$. Then $se\le s,t$ and so $A_{es^*se}\subseteq I_{s,t}$. Since projections commute,  $A_{es^*se}=A_{es^*s}=A_{e}\cap A_{s^*s}$. Thus $\pi\in\hat{A}_{es^*se}\subseteq \hat{I}_{s,t}$.

    Second, suppose that $\pi=\eta$ and that $\pi\in \hat{I}_{s,t}$. 
     Then $I_{s,t}\neq \{0\}$ and $\pi\neq 0$, and so by definition of $I_{s,t}$ there must exist some $v\le s,t$ such that $\pi|_{A_{v^*v}}\neq 0$.  
    Now  $\pi\in \hat{A}_{v^*v}$ and $v=vv^*s=vv^*t$. We have
    \[s(v^*v)=s(s^*vv^*)(vv^*s)=ss^*vv^*s=vv^*ss^*s=vv^*s=v.\] Similarly, $tv^*v=v$. Thus $sv^*v=tv^*v$ and $\pi=\eta$, whence  $(s,\pi)\sim (t,\pi)$.

    Finally, if $(e,\pi)$ and $(e',\pi)\in \Omega$, then $\pi\in \hat A_e\cap \hat A_{e'}=\hat A_{ee'}$ and $e(ee')=e'(ee')$, whence $(e,\pi)\sim(e',\pi)$. 
\end{proof}

  We now consider the  set 
\[G(\hat{A},S,\beta)\coloneqq \Omega/\!\!\sim=\{[s,\pi]:s\in S, \pi\in \hat{A}_{s^*s}\}\]
of germs and equip it with a groupoid structure. Here $\hat A$ may not be Hausdorff, and in the usual constructions, for example in \cite[Theorem~3.3.2]{Paterson-groupoid-book} and \cite[Proposition~4.11]{Exel2008}, the inverse semigroup acts on a locally compact, Hausdorff space. 

To equip $G(\hat{A},S,\beta)$ with a groupoid structure, we define the set of composable pairs by
\[G(\hat{A},S,\beta)^{(2)}\coloneqq\{([s,\pi],[t,\eta])\in G(\hat{A},S,\beta)^{2}:\pi=\beta_t(\eta)\}
,\] 
and multiplication and inversion   by 
\[[s,\pi]\cdot[t,\eta]=[st,\eta]\quad\text{and}\quad [s,\pi]^{-1}=[s^*,\beta_s(\pi)].\]
It is straightforward to show that the operations are well-defined and give a groupoid structure. 
For these algebraic arguments it does not matter that $\hat A$ may not be Hausdorff, so see \cite[Theorem~3.3.2]{Paterson-groupoid-book}  or \cite[Proposition~4.11]{Exel2008} for this. In  \Cref{prop: groupoid of germs} below we carefully check that the usual construction  still gives  a locally compact topology  with respect to which $G(\hat{A},S,\beta)$ is a topological groupoid. The unit space is
 \begin{align*}
    G(\hat{A},S,\beta)^{(0)}&=\{[e,\pi]: e\in E(S), \pi\in \hat{A}_e\}.
 \end{align*}
 Note that  $[e,\pi]=[e',\pi]$ for all $e,e'\in E(S)$ by \Cref{lem: equivalence in omega}. 
 For $\pi \in \hat{A}_e$, we write  \[G_\pi \coloneqq G(\hat{A},S,\beta)_{[e,\pi]}=\{[t,\pi]\colon \pi \in \hat{A}_{t^*t}\}\]
for the fibre over $[e,\pi]$.

We say that a topological space is \emph{locally compact} if every point has a neighbourhood basis consisting of compact neighbourhoods. If $A$ is a C*-algebra, then $\hat A$ is locally compact in this sense by \cite[Corollary~3.3.8]{Dixmier}.

\begin{prop}\label{prop: groupoid of germs} Let $\alpha\colon S \curvearrowright A$ be an action of an inverse semigroup $S$ on a C*-algebra $A$, and let $\beta\colon S \curvearrowright \hat A$ be the induced action of $S$ on the spectrum $\hat A$ of $A$ from \Cref{prop: induced action on spectrum}. 
For $s\in S$ and a subset $U\subseteq \hat{A}_{s^*s}$, let $\Theta(s,U)\coloneqq \{[s,\pi]:\pi\in U\}$.
Then \begin{equation}\label{eq basis}\{\Theta(s,U):s\in S,\ U\text{ is open in } \hat{A}_{s^*s} \}\end{equation} is a basis for a locally compact topology on $G(\hat{A},S,\beta)$. With respect to this topology, $G(\hat{A},S,\beta)$ is a locally compact, \'{e}tale, topological groupoid  with  unit space homeomorphic to $\hat{A}$.
\end{prop}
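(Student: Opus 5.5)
I will follow the standard groupoid-of-germs argument (as in \cite[Proposition~4.11]{Exel2008}), checking at each step that only local compactness of $\hat A$ is used and never the Hausdorff property.

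\emph{Basis.} The sets $\Theta(s,U)$ cover $G(\hat A,S,\beta)$, since $[s,\pi]\in\Theta(s,\hat A_{s^*s})$. For the intersection property, suppose $[r,\pi]\in\Theta(s,U)\cap\Theta(t,V)$. Then $[r,\pi]=[s,\pi]=[t,\pi]$, so by \Cref{lem: equivalence in omega} we have $\pi\in \hat I_{s,t}$. Let $W\coloneqq U\cap V\cap\hat I_{s,t}$; this is open and lies in $\hat A_{s^*s}$. For every $\eta\in W$, \Cref{lem: equivalence in omega} gives $[s,\eta]=[t,\eta]$. Hence $[r,\pi]\in\Theta(s,W)\subseteq\Theta(s,U)\cap\Theta(t,V)$, so the sets form a basis.

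\emph{Local homeomorphisms.} Next I show that for each $s$ the map $\theta_s\colon\hat A_{s^*s}\to\Theta(s,\hat A_{s^*s})$, $\pi\mapsto[s,\pi]$, is a homeomorphism onto an open set.
\begin{itemize}
\item It is a bijection, because germs with the same $s$ and different $\pi$ are different.
\item It is open by the definition of the basis.
\item It is continuous: for a basic set $\Theta(t,V)$, its preimage is $\{\pi\in\hat A_{s^*s}:[s,\pi]\in\Theta(t,V)\}=V\cap\hat I_{s,t}$, again by \Cref{lem: equivalence in omega}, and this is open.
\end{itemize}
Local compactness of $G$ now follows: a point $[s,\pi]$ has the open neighbourhood $\Theta(s,\hat A_{s^*s})$, homeomorphic to the open set $\hat A_{s^*s}$ of the locally compact space $\hat A$ \cite[Corollary~3.3.8]{Dixmier}. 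Its compact neighbourhood basis is transported by $\theta_s$. Here I need that an open subset of a locally compact space, in the "neighbourhood basis of compact sets" sense, is itself locally compact; this holds without Hausdorffness, since compact neighbourhoods inside the open set are still neighbourhoods in the whole space.

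\emph{Unit space.} The map $\pi\mapsto[e,\pi]$ is well defined independently of $e$ by \Cref{lem: equivalence in omega}. It is a bijection from $\hat A$ onto $G^{(0)}$. It restricts to $\theta_e$ on each open set $\hat A_e$, which cover $\hat A$, so it is a homeomorphism.

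\emph{Groupoid structure.} I will check the following.
\begin{itemize}
\item The source map $[s,\pi]\mapsto\pi$ is $\theta_s^{-1}$ on $\Theta(s,\hat A_{s^*s})$, hence a local homeomorphism.
\item The range map is $\beta_s\circ\theta_s^{-1}$ locally, hence also a local homeomorphism by \Cref{prop: induced action on spectrum}.
\item Inversion maps $\Theta(s,U)$ onto $\Theta(s^*,\beta_s(U))$, so it is continuous (indeed open).
\item Multiplication: suppose $([s,\pi],[t,\eta])$ is composable and $[st,\eta]\in\Theta(r,W)$. Shrinking as in the basis step, I may assume $[st,\eta]\in\Theta(st,W')$ with $W'\subseteq W\cap\hat I_{st,r}$. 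Then $\Theta(s,\beta_t(W'\cap\hat A_{t^*t}))\times\Theta(t,W'\cap\hat A_{(st)^*st})$, intersected with $G^{(2)}$, is a neighbourhood of the pair that multiplication maps into $\Theta(r,W)$.
\end{itemize}
Together these show $G(\hat A,S,\beta)$ is an étale topological groupoid.

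I expect the main obstacle to be the local compactness claim without Hausdorffness. I will handle it carefully using the paper's definition of locally compact, which asks for a compact neighbourhood basis at every point, and transporting that basis through the homeomorphisms $\theta_s$. The algebraic domain bookkeeping for multiplication is routine by comparison.
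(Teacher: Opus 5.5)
Your proposal is correct and follows essentially the same route as the paper: the same shrinking to $U\cap V\cap\hat I_{s,t}$ for the basis, the same transport of compact neighbourhoods $B$ of $\pi$ to the sets $\Theta(s,B)$ (your continuity of $\theta_s$ is exactly the paper's finite-subcover argument), and the same basic neighbourhoods $\Theta(s,\beta_t(U))\times\Theta(t,U)$ and $\Theta(s,\beta_{s^*}(U))$ for multiplication and inversion. The only differences are minor. You obtain étaleness directly, by exhibiting source and range locally as $\theta_s^{-1}$ and $\beta_s\circ\theta_s^{-1}$, whereas the paper shows that $G^{(0)}$ and $r$ are open and then cites a lemma. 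You also prove explicitly the homeomorphism $\hat A\cong G^{(0)}$, which the paper only asserts.
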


\begin{proof} As mentioned above, that $G(\hat{A},S,\beta)$ is a groupoid follows as in \cite[Theorem~3.3.2]{Paterson-groupoid-book}  or \cite[Proposition~4.11]{Exel2008} even though $\hat A$ may not be Hausdorff.  Next, we check the topological minutia.

We start by showing that \eqref{eq basis} 
is a basis for a topology.
Since $\beta$ is an action we have $\cup_{s\in S}\hat{A}_{s^*s}=\hat{A}$, and hence $\cup_{s\in S}\Theta(s,\hat{A}_{s^*s})=G(\hat{A},S,\beta)$.
Let  $s,t\in S$, and let $U\subset\hat{A}_{s^*s}$ and $V\subset \hat{A}_{t^*t}$ be open subsets such that $\Theta(s,U)\cap \Theta(t,V)\ne \emptyset$. 
Then there exist $\pi\in U,\eta\in V$ such that $[s,\pi]=[t,\eta]\in \Theta(s,U)\cap \Theta(t,V)$. By \Cref{lem: equivalence in omega},  $\pi=\eta$ and $\pi\in \hat{I}_{s,t}$. Let $W=U\cap V\cap \hat{I}_{s,t}$. Then for all $\xi\in W$ we have $[s,\xi]=[t,\xi]$. Thus $\Theta(s,W)\subseteq \Theta(s,U)\cap \Theta(t,V)$.  Therefore $\{\Theta(s,U):s\in S, U\text{ is open in }  \hat{A}_{s^*s} \}$ is a basis.

To see $G(\hat{A},S,\beta)$ is locally compact in this topology, fix $[s,\pi]\in G(\hat{A},S,\beta)$. Since $\hat{A}$ is locally compact there exists a neighbourhood basis $\BB_\pi$ of $\pi$ consisting of compact neighbourhoods of $\pi$.  Since $\hat{A}_{s^*s}$ is open, we may assume that $B\subseteq \hat{A}_{s^*s}$ for all $B\in \BB_\pi$. 
We claim that \[\{\Theta(s,B):B\in\BB_\pi\}\] forms a neighbourhood basis consisting of compact neighbourhoods of $[s,\pi]$. If $X$ is an open neighbourhood of $[s,\pi]$, then there exists open subset $U$ of $A_{s^*s}$ such that $[s,\pi]\in\Theta(s,U)\subseteq X$. Since $\BB_\pi$ is a neighbourhood basis of $\pi$, there exists a compact neighbourhood $B\in\BB_\pi$ such that $\pi\in B\subseteq U$. Thus $[s,\pi]\in\Theta(s, B)\subseteq \Theta(s,U)\subseteq X$. This shows $\{\Theta(s,B):B\in\BB_\pi\}$ forms a neighbourhood basis; it remains to show that each $\Theta(s,B)$ is  compact.

It suffices to consider an open cover of $\Theta(s,B)$ by a union  of basic sets $\{\Theta(t_i,U_i)\}_{i\in\II}$. Since $B\subseteq \hat A_{s^*s}$ and 
$\Theta(t_i,U_i)\cap \Theta(s,\hat{A}_{s^*s})
=
\Theta(s,U_i\cap \hat{I}_{s,t_i})$ by \Cref{lem: equivalence in omega},  the collection $\{U_i\cap \hat{I}_{s,t_i}:i\in\II\}$ covers $B$. 
Since $B$ is compact there exists a finite $F\subset \II$ such that  $\{U_i\cap \hat{I}_{s,t_i}:i\in F \}$ covers $B$. Then $\{\Theta(t_i,U_i):i\in F \}$ is a finite open subcover of $\Theta(s, B)$. Thus every point in $G(\hat{A},S,\beta)$ has a neighbourhood basis of compact sets and $G(\hat{A},S,\beta)$ is locally compact.

Next we show that multiplication is continuous. Suppose that  \[([s_\lambda,\beta_{t_\lambda}(\eta_\lambda)],[t_\lambda,\eta_\lambda] )\to ([s,\beta_{t}(\eta)],[t,\eta] )\] in $G(\hat{A},S,\beta)^{(2)}$. Fix a basic neighbourhood $\Theta(st, U)$ of $[st,\eta]=[s,\beta_{t}(\eta)][t,\eta]$ with $U\subseteq \hat{A}_{t^*s^*st}$. Then $U\subseteq \hat{A}_{t^*t}$ and $\Theta(t, U)$ is a neighbourhood of $[t, \eta]$. Since  $[t_\lambda,\eta_\lambda]\to [t, \eta]$ in $G(\hat{A},S,\beta)$, we have $[t_\lambda,\eta_\lambda]\in \Theta(t, U)$ eventually, that is, $[t_\lambda, \eta_\lambda]=[t, \eta_\lambda]$ and $\eta_\lambda\in U$ eventually.  
We have $\beta_t(U)\subseteq\beta_t(\hat{A}_{(st)^*st})\subseteq \hat{A}_{s^*s}$, and so 
$[s_\lambda,\beta_{t_\lambda}(\eta_\lambda)]=[s_\lambda,\beta_{t}(\eta_\lambda)]\in \Theta(s,\beta_t(U))$ 
eventually, 
that is, $[s_\lambda, \beta_t(\eta_\lambda)]=[s, \beta_t(\eta_\lambda)]$ eventually. 
Thus eventually we have
\[
[s_\lambda, \beta_{t_\lambda}(\eta_\lambda)][t_\lambda, \eta_\lambda]=[s, \beta_t(\eta_\lambda)][t, \eta_\lambda]=[st,\eta_\lambda]\in \Theta(st, U),
\]
as needed.
Thus multiplication is continuous.

To show that inversion is continuous, suppose that $[s_\lambda,\pi_\lambda] \to [s,\pi]$. Let $\Theta(s^*,U)$ be a basic neighbourhood of $[s,\pi]^{-1}=[s^*,\beta_{s}(\pi)]$; we may assume that   $U\subseteq \hat{A}_{ss^*}$.  Then $[s,\pi]\in \Theta(s,\beta_{s^*}(U))$. Thus $[s_\lambda,\pi_\lambda]$ is eventually in $\Theta(s,\beta_{s^*}(U))$, that is,  $[s_\lambda,\pi_\lambda]=[s, \pi_\lambda]$ and $\pi_\lambda\in \beta_{s^*}(U)$ eventually. Thus eventually $\beta_{s_\lambda}(\pi_\lambda)=\beta_s(\pi_\lambda)\in \beta_s(\beta_{s^*}(U))=U$ and $[s_\lambda,\pi_\lambda]^{-1}=[s ,\pi_\lambda]^{-1}=[s^*,\beta_{s}(\pi_\lambda)]\in \Theta(s^*,U)$. Now inversion is continuous.  Thus $G(\hat{A},S,\beta)$ is a topological groupoid.

Since multiplication and inversion are continuous, the range map $r\colon x\mapsto xx^{-1}$ is continuous. 
To see that $r$ is a local homeomorphism, it suffices to show that the unit space is open and $r$ is open (see \cite[Lemma~1.25]{Williams-groupoid-book}, the proof of which does not use that the groupoid or its unit space is Hausdorff).  Since $\hat{A}=\cup_{s\in S}\hat{A}_{s^*s}$ we see that $G(\hat{A},S,\beta)^{(0)}=\cup_{s\in S}\Theta(s,\hat{A}_{s^*s})$ is open.
Furthermore, $r(\Theta(s, U))=\Theta(ss^*,\beta_s(U))$. Since $\{\Theta(s,U):s\in S, U\text{ is open in } \hat{A}_{s^*s} \}$ is a basis for the topology, it follows that $r$ is open. Thus $G(\hat{A},S,\beta)$ is \'etale. 
\end{proof}

\begin{remark}
   We note that $G(\hat{A},S,\beta)$ is isomorphic to, for example,  the ``dual groupoid'' $\hat{A}\rtimes S$ from \cite[Definition~2.14]{KwasniewskiMeyer2021} which is used in the characterisation of simplicity of the essential crossed product $A\rtimes_\ess S$ in \cite[Corollary~6.15]{KwasniewskiMeyer2021}.  A related groupoid is the transformation groupoid  $\Prim(A)\rtimes S$ of \cite[Theorem~6.5]{BussExelMeyer2017} which detects when a certain weak conditional expectation  $E\colon A\rtimes_\red S\to A''$ takes values in $A$.
\end{remark}

Let $\pi\in\hat A$, and write $\pi\colon A\to B(H_\pi)$. 
We now build a covariant representation $(\rho_\pi, \sigma_\pi)$  on $\ell^2(G_\pi,H_\pi)$, 
the Hilbert space of square-integrable functions on the fibre $G_\pi$ of 
$G(S,\beta,\hat A)$ with values in $H_\pi$.

\begin{notation}\label{notation: point mass}
Let $\pi\in \hat{A}$ and let $h\in H_\pi$. We write $\chi^h_{[s,\pi]}$ for the function in $\ell^2(G_\pi,H_\pi)$ such that $\chi^h_{[s,\pi]}([t,\pi])=h$ if $[s,\pi]=[t,\pi]$ and is $0$ otherwise. 
\end{notation}

\begin{thm}\label{thm: rep induced from hat A}
Let $\alpha\colon S \curvearrowright A$ be an action of an inverse semigroup $S$ on a C*-algebra $A$, and let $\beta\colon S \curvearrowright \hat A$ be the induced action of $S$ on the spectrum $\hat A$ of $A$ from \Cref{prop: induced action on spectrum}.  Let $\pi\colon A\to B(H_\pi)$ be an  irreducible representation. Let $\xi\in\ell^2(G_\pi, H_\pi)$ and define 
$\rho_\pi\colon A\to B(\ell^2(G_\pi, H_\pi))$ and $\sigma_\pi\colon S\to B(\ell^2(G_\pi, H_\pi))$ by 
\begin{align*}
(\rho_\pi(a)\xi)([t,\pi])&=\beta_t(\pi)(a)\big(\xi([t,\pi])\big)\text{\ and }\\
(\sigma_\pi(s)\xi)([t,\pi])&=\begin{cases}
\xi([s^*t,\pi])&\text{if $\beta_t(\pi) \in \hat{A}_{ss^*}$ }\\ 0&\text{otherwise.}
\end{cases}
\end{align*}
Then  $(\rho_\pi,\sigma_\pi)$ is a covariant representation of $\alpha\colon S \curvearrowright A$.
\end{thm}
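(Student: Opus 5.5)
Here $\rho_\pi$ is a direct sum of irreducible representations and $\sigma_\pi$ acts by partial translations on the fibre $G_\pi$, so the proof is a direct verification of the conditions in the definition of a covariant representation. I first check that both maps are well defined. For $\rho_\pi$: if $[t,\pi]=[t',\pi]$ then $\pi\in\hat I_{t,t'}$ by \Cref{lem: equivalence in omega}, and $\beta_t(\pi)=\beta_{t'}(\pi)$ because $\alpha_{t^*}$ and $\alpha_{t'^*}$ agree on the relevant ideal by \Cref{lem: I_st properties}. Equivalently, $\beta_t(\pi)$ is the range of the germ $[t,\pi]$. Thus $\rho_\pi=\bigoplus_{\gamma\in G_\pi}r(\gamma)$ is a direct sum of irreducible representations. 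It is therefore a bounded $*$-representation, and it is non-degenerate because each summand is.

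For $\sigma_\pi$: if $\beta_t(\pi)\in\hat A_{ss^*}$ then $[s^*,\beta_t(\pi)][t,\pi]=[s^*t,\pi]$ is a well-defined germ. The condition depends only on $r([t,\pi])$, so it is independent of the representative. Hence $\sigma_\pi(s)$ is the partial isometry induced by the bijection $\gamma\mapsto[s^*,r(\gamma)]\gamma$. This bijection maps $\{\gamma\in G_\pi: r(\gamma)\in\hat A_{ss^*}\}$ onto $\{\gamma\in G_\pi: r(\gamma)\in\hat A_{s^*s}\}$, with inverse given by left multiplication by the germ of $s$. From this description, $\sigma_\pi(s)^*=\sigma_\pi(s^*)$ follows by a direct inner-product check on point masses $\chi^h_{[t,\pi]}$.

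For multiplicativity I compare $\sigma_\pi(s)\sigma_\pi(u)\xi$ and $\sigma_\pi(su)\xi$ at $[t,\pi]$. The composite is nonzero iff $\beta_t(\pi)\in\hat A_{ss^*}$ and $\beta_{s^*t}(\pi)\in\hat A_{uu^*}$, and in that case its value is $\xi([u^*s^*t,\pi])$. I then use the domain computation from the proof of \Cref{prop: induced action on spectrum}, namely $\beta_{s}^{-1}(\hat A_{s^*s}\cap\hat A_{uu^*})=\hat A_{(su)(su)^*}$ (with $s^*$ in place of $t$). This shows the two support conditions together are equivalent to $\beta_t(\pi)\in\hat A_{(su)(su)^*}$. \textbf{This matching of domains is the main bookkeeping obstacle;} once it is done, $\sigma_\pi$ is a $*$-homomorphism of $S$.

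Covariance (2): for $a\in A_{s^*s}$, evaluate both sides at $[t,\pi]$. If $\beta_t(\pi)\notin\hat A_{ss^*}$, both sides vanish. Indeed, the right side is killed by $\sigma_\pi(s)$, and on the left $\beta_t(\pi)(\alpha_s(a))=0$ because $\alpha_s(a)\in A_{ss^*}$. Otherwise the right side equals $\beta_{s^*t}(\pi)(a)\xi([t,\pi])$, using $\sigma_\pi(s^*)$ followed by $\sigma_\pi(s)$ and $ss^*t$-germ equal to the $t$-germ there. Since $\beta_{s^*t}(\pi)=\beta_{s^*}(\beta_t(\pi))=\overline{\beta_t(\pi)\circ\alpha_s}$, this equals $\beta_t(\pi)(\alpha_s(a))\xi([t,\pi])$, which is the left side.

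Condition (3): by the multiplicativity already proved, $\sigma_\pi(e)$ is the projection onto functions supported on $\{[t,\pi]:\beta_t(\pi)\in\hat A_e\}$. On such a summand $\beta_t(\pi)|_{A_e}\neq0$, and an irreducible representation restricted to a nonzero ideal is non-degenerate, so $\beta_t(\pi)(A_e)H_\pi$ is dense in $H_\pi$. On the other summands $A_e$ acts as zero. Hence $\overline{\rho_\pi(A_e)\ell^2}=\sigma_\pi(e)\ell^2$, and closedness of the left side gives equality.
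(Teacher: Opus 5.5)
Your proposal is correct and follows essentially the paper's route. It is a direct check of each axiom, using the same domain matching for $\sigma_\pi(s)\sigma_\pi(u)=\sigma_\pi(su)$ and the same pointwise covariance computation; viewing $\rho_\pi$ as $\bigoplus_{\gamma\in G_\pi} r(\gamma)$ and $\sigma_\pi(s)$ as the partial isometry of a partial bijection of $G_\pi$ just repackages the paper's explicit sum and approximate-identity calculations, and you also make explicit that $\beta_t(\pi)$ depends only on the germ $[t,\pi]$, which the paper leaves implicit. One slip: in the multiplicativity step the set should be $\beta_{s^*}^{-1}(\hat A_{s^*s}\cap\hat A_{uu^*})=\beta_s(\hat A_{s^*s}\cap\hat A_{uu^*})=\hat A_{(su)(su)^*}$, which is what substituting $s^*$ for $t$ in the domain computation you cite actually gives, not $\beta_s^{-1}(\hat A_{s^*s}\cap\hat A_{uu^*})$.
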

\begin{proof}
If $\pi\in \hat A_{t^*t}$ and $\beta_t(\pi)\in \hat A_{ss^*}$, then $[s^*, \beta_t(\pi)]$ and $[t,\pi]$ are composable in the groupoid of \Cref{prop: groupoid of germs} with product $[s^*t,\pi]$; thus $\pi\in\hat A_{(s^*t)^*(s^*t)}$. It is straightforward to check that $\rho_\pi(a)\xi$ and $\sigma_\pi(s)\xi$ are in $\ell^2(G_\pi, H_\pi)$.

We check  that $\rho_\pi$ is a non-degenerate representation. Let $a, b\in A$ and $\xi, \zeta\in \ell^2(G_\pi, H_\pi)$. Since $\beta_t(\pi)$ is a homomorphism we have
\begin{align*}
(\rho_\pi(ab)\xi)([t,\pi])
&=\beta_t(\pi)(ab)\big(\xi([t,\pi])\big)
=\big(\beta_t(\pi)(a)\beta_t(\pi)(b)\big)\big(\xi([t,\pi])\big)\\
&=\beta_t(\pi)(a)\big((\rho_\pi(b)\xi)([t,\pi]) \big)
=\rho_\pi(a)\big(\rho_\pi(b)\xi \big) ([t,\pi])\\
&=((\rho_\pi(a) \rho_\pi(b))\xi) ([t,\pi]),
\end{align*}
whence $\rho_\pi(ab)=\rho_\pi(a) \rho_\pi(b)$. Further, 
\begin{align*}
    (\rho_\pi(a)\xi\mid\zeta)
    &=\sum_{[t,\pi]\in G_\pi} \big(\beta_t(\pi)(a)(\xi([t,\pi]))\mid \zeta([t,\pi]) \big)\\
    &=\sum_{[t,\pi]\in G_\pi} \big(\xi([t,\pi])\mid \beta_t(\pi)(a)^*(\zeta([t,\pi])) \big)\\
     &=\sum_{[t,\pi]\in G_\pi} \big(\xi([t,\pi])\mid \beta_t(\pi)(a^*)(\zeta([t,\pi])) \big)\\
     &=(\xi\mid \rho_\pi(a^*)\zeta),
\end{align*}
whence $\rho_\pi(a)^*=\rho_\pi(a^*)$.

To see that $\rho_\pi$ is non-degenerate, let  $[s,\pi]\in G_\pi$ and $h\in H_\pi$. It suffices to approximate the function $\chi_{[s,\pi]}^h$ of \Cref{notation: point mass}  by functions in $\vecspan\{\rho_\pi(a)\xi:a\in A, \xi\in \ell^2(G_\pi,H_\pi)\}$.  
Since  $[s,\pi]\in G_\pi$, we have  $0\neq\beta_s(\pi)\in \hat A$ and hence $\beta_s(\pi)$ is non-degenerate. Let $\{i_\lambda\}$ be an approximate identity for $A$; then $\beta_s(\pi)(i_\lambda)h\to h$.
Now
\begin{align*}
\|\rho_\pi(i_\lambda)\chi_{[s,\pi]}^{h}- \chi_{[s,\pi]}^h \|^2
&=\sum_{[t,\pi]\in G_\pi} \|\rho_\pi(i_\lambda)\chi_{[s,\pi]}^{h})([t,\pi])- \chi_{[s,\pi]}^h([t,\pi])\|^2\\
&=\sum_{[t,\pi]=[s,\pi]}\|\beta_{t}(\pi)(i_\lambda)(\chi_{[s,\pi]}^{h}([t,h]))-\chi_{[s,\pi]}^h([t,\pi])\|^2\\
&=\|\beta_{s}(\pi)(i_\lambda)h-h\|^2\to 0.
\end{align*}
Thus $\rho_\pi$ is non-degenerate. 

We will now check the three items in \Cref{defn covariant pair}. 
Fix $r,s,t \in S$ and $\xi, \zeta \in \ell^2(G_\pi, H_\pi)$.
To see that $\sigma_\pi(r)\sigma_\pi(s)=\sigma_\pi(rs)$ we compute
\begin{align*}
(\sigma_\pi(r)(\sigma_\pi(s)\xi))([t,\pi])&=\begin{cases}
(\sigma_\pi(s)\xi)([r^*t,\pi])&\text{if $\beta_t(\pi) \in \hat{A}_{rr^*}$ }\\ 0&\text{otherwise}
\end{cases}\\
&=\begin{cases}
\xi([s^*r^*t,\pi])&\text{if $\beta_t(\pi) \in \hat{A}_{rr^*}$ and $\beta_{r^*t}(\pi) \in \hat{A}_{ss^*}$ }\\ 0&\text{otherwise}
\end{cases}
\\
(\sigma_\pi(rs)\xi)([t,\pi])&=\begin{cases}
\xi([s^*r^*t,\pi])&\text{if $\beta_t(\pi) \in \hat{A}_{rss^*r^*}$ }\\ 0&\text{otherwise.}
\end{cases}
\end{align*}
If $\beta_t(\pi) \in \hat{A}_{rr^*}$ and $\beta_{r^*t}(\pi) \in \hat{A}_{ss^*}$, then \[\beta_t(\pi)=\beta_r(\beta_{r^*t}(\pi)) \in \hat{A}_{rss^*r^*}.\]
If   $\beta_t(\pi) \in \hat{A}_{rss^*r^*}$, then  
\[\beta_{r^*t}(\pi)=\beta_{r^*}(\beta_t(\pi)) \in \hat{A}_{r^*rss^*rr^*}=\hat{A}_{r^*rss^*}\subseteq \hat{A}_{ss^*}.\]
Thus $\beta_t(\pi) \in \hat{A}_{rr^*}$ and $\beta_{r^*t}(\pi) \in \hat{A}_{ss^*}$ if and only if $\beta_t(\pi) \in \hat{A}_{rss^*r^*}$, and a glance at the formulas above gives
 $\sigma_\pi(r)\sigma_\pi(s)=\sigma_\pi(rs)$.

To see that $\sigma_\pi(s^*)=\sigma_\pi(s)^*$ we compute
\begin{align}
(\sigma_\pi(s)\xi\mid\zeta)&=\sum_{[t,\pi] \in G_\pi}(\sigma_\pi(s)(\xi)([t,\pi])\mid\zeta([t,\pi]))_{H_\pi}\notag\\
&=\sum_{\{[t,\pi] \in G_\pi: \beta_t(\pi) \in \hat{A}_{ss^*}\}}(\xi([s^*t,\pi])\mid\zeta([t,\pi]))_{H_\pi};\label{eq: sigma_pis} 
\\
(\xi\mid\sigma_\pi(s^*)\zeta)&=\sum_{[u,\pi] \in G_\pi}(\xi([u,\pi])\mid\sigma_\pi(s^*)(\zeta)([u,\pi]))_{H_\pi}\notag\\
&=\sum_{\{[u,\pi] \in G_\pi: \beta_u(\pi) \in \hat{A}_{s^*s}\}}(\xi([u,\pi])\mid\zeta([su,\pi]))_{H_\pi}.\label{eq: sigma_pis*}
\end{align}
To reconcile \Cref{eq: sigma_pis} and  \Cref{eq: sigma_pis*}, first let $t\in S$ such that $[t,\pi] \in G_\pi$ and $\beta_t(\pi) \in \hat{A}_{ss^*}$. By definition of $[t,\pi]$, we have $\pi\in\hat A_{t^*t}$. Let $u=s^*t$. Then $\pi\in \hat A_{u^*u}=\hat A_{t^*ss^*t}$ and $\beta_u(\pi)=\beta_{s^*t}(\pi)\in\hat A_{s^*tt^*s}\subseteq \hat A_{s^*s}$. To see  that $(su,\pi)\sim (t,\pi)$ we note that $su(u^*u)=ss^*t=t(u^*u)$ and $\pi\in \hat A_{u^*u}$; thus $[t,\pi]=[su,\pi]$.  This shows every summand in \Cref{eq: sigma_pis} equals a summand in \Cref{eq: sigma_pis*}.  Second, let $u\in S$ such that
$[u,\pi] \in G_\pi$ and $\beta_u(\pi) \in \hat{A}_{s^*s}$. Thus $\pi\in\hat A_{u^*u}$. Let $t=su$. Then $[t,\pi]\in G_\pi$ since $[su,\pi]\in G_\pi$. We have $\beta_t(\pi)=\beta_{su}(\pi)\in 
\hat A_{suu^*s}\subseteq \hat A_{ss^*}$. To see that $(u,\pi)\sim (s^*t,\pi)$ we note that $u(t^*t)=s^*su=s^*t(t^*t)$ and $\pi\in \hat A_{t^*t}$; thus $[u,\pi]=[s^*t,\pi]$.  This shows every summand in \Cref{eq: sigma_pis*} equals a summand in \Cref{eq: sigma_pis}, and we conclude that $\sigma_\pi(s^*)=\sigma_\pi(s)^*$.

Next we check the covariance. Let $a\in A_{s^*s}$.  Then
\begin{align*}
(\sigma_\pi(s)(\rho_\pi(a)(\sigma_\pi(s^*)\xi)))([t,\pi])&=\begin{cases}
(\rho_\pi(a)(\sigma_\pi(s^*)\xi))([s^*t,\pi]) &\text{if $\beta_{t}(\pi) \in \hat{A}_{ss^*}$}\\
0&\text{otherwise}
\end{cases}\\
&=\begin{cases}
\beta_{s^*t}(\pi)(a)(\sigma_\pi(s^*)\xi)([s^*t,\pi]) &\text{if $\beta_{t}(\pi) \in \hat{A}_{ss^*}$}\\
0&\text{otherwise}
\end{cases}\\
&=\begin{cases}
\beta_{s^*t}(\pi)(a)\xi([ss^*t,\pi]) &\text{if $\beta_{t}(\pi) \in \hat{A}_{ss^*}$ and $\beta_{s^*t}(\pi) \in \hat{A}_{s^*s}$}\\
0&\text{otherwise.}
\end{cases}
\end{align*}
If $\beta_{t}(\pi) \in \hat{A}_{ss^*}$, then $\beta_{s^*t}(\pi) \in \hat{A}_{s^*s}$. Further, $ss^*t(t^*ss^*t)=t(t^*ss^*t)$ and $\pi\in \hat A_{(s^*t)^*s^*t}$, giving $[ss^*t,\pi]=[t,\pi]$.
Thus 
\begin{align*}
(\sigma_\pi(s)(\rho_\pi(a)(\sigma_\pi(s^*)\xi)))([t,\pi])&=
\begin{cases}
\beta_{s^*t}(\pi)(a)\xi([t,\pi]) &\text{if $\beta_{t}(\pi) \in \hat{A}_{ss^*}$ }\\
0&\text{otherwise.}
\end{cases}
\end{align*}
On the other hand, if $\beta_t(\pi)\notin\hat A_{ss^*}$, then $\beta_t(\pi)(\alpha_s(a))=0$, whence 
\begin{align*}
\rho_\pi(\alpha_s(a))\xi([t,\pi])&=\beta_{t}(\pi)(\alpha_s(a))\xi([t,\pi])\\
&=\begin{cases}
\beta_{t}(\pi)(\alpha_s(a))\xi([t,\pi]) &\text{if $\beta_{t}(\pi) \in \hat{A}_{ss^*}$ }\\
0&\text{otherwise}
\end{cases}\\
&=\begin{cases}
\beta_{s^*t}(\pi)(a)\xi([t,\pi]) &\text{if $\beta_{t}(\pi) \in \hat{A}_{ss^*}$ }\\
0&\text{otherwise.}
\end{cases}
\end{align*}
Thus $\sigma_\pi(s)\rho_\pi(a)\sigma_\pi(s^*)=\rho_\pi(\alpha_s(a))$.

Finally, fix $e \in E(S)$; we will show that $\rho_\pi(A_e) \ell^2(G_\pi, H_\pi)=\sigma_\pi(e)\ell^2(G_\pi, H_\pi)$. Observe  that  if $\beta_t(\pi) \in \hat{A}_e$ then $\beta_t(\pi)=\beta_{et}(\pi)$ in which case $[t,\pi]=[et,\pi]$. Thus 
\begin{equation}\label{eq: ell2 functions}
    \sigma_\pi(e)\ell^2(G_\pi, H_\pi)=\{\xi \in \ell^2(G_\pi, H_\pi):\xi([t,\pi])=0 \text{ for all } t \in S \text{ such that } \beta_t(\pi)\not \in \hat{A}_e\}. 
\end{equation}
Let  $a \in A_e$ and $\xi \in \ell^2(G_\pi, H_\pi)$. 
Then
\begin{align*} \rho_\pi(a)\xi([t,\pi])
&=\beta_t(\pi)(a)\xi([t,\pi])
=\begin{cases}
\beta_t(\pi)(a)\xi([t,\pi])&\text{if $\beta_t(\pi) \in \hat{A}_e$}\\0&\text{otherwise,}
\end{cases}
\end{align*}
and \Cref{{eq: ell2 functions}} gives $\rho_\pi(a)\xi \in \sigma_\pi(e)\ell^2(G_\pi, H_\pi)$.
Thus $\rho_\pi(A_e) \ell^2(G_\pi, H_\pi)\subseteq \sigma_\pi(e)\ell^2(G_\pi, H_\pi)$.

For the reverse containment, it suffices to show that $\sigma_\pi(e)\xi\in \rho_\pi(A_e) \ell^2(G_\pi, H_\pi)$ for  $\xi\in \ell^2(G_\pi,H_\pi)$ with finite support $F$.  Note that $F\subseteq \{[t,\pi]:\beta_t(\pi)\in \hat A_e\}$ using  \Cref{eq: ell2 functions}.
Let $\{i_\lambda\}$ be an approximate identity for $A_e$.  For $[t,\pi]\in F$ we have $\pi\in\hat A$ and $\beta_t(\pi)|_{A_e}\neq 0$; thus $\beta_t(\pi)|_{A_e}$ is non-degenerate and $\beta_t(\pi)(i_\lambda)\to \id_{H_\pi}$ strongly. 
Now
\begin{align*}
\rho_\pi(i_\lambda)\xi([t,\pi])
&= \begin{cases}
\beta_t(\pi)(i_\lambda)\xi([t,\pi])&\text{if $\beta_t(\pi) \in \hat{A}_e$}\\
0&\text{otherwise}
\end{cases}\\
&\to\begin{cases}
\xi([t,\pi])&\text{if $\beta_t(\pi) \in \hat{A}_e$}\\
0&\text{otherwise}
\end{cases}\\
&=\xi([t,\pi]).
\end{align*}
Fix $\epsilon>0$. Since $F$ is finite there exists $\lambda_0$ such that
\[
\lambda>\lambda_0\Longrightarrow \|\beta_t(\pi)(i_\lambda)\xi([t,\pi])-\xi([t,\pi])\|^2<\frac{\epsilon^2}{|F|}\text{ for all $[t,\pi]\in F$.}
\]
Now for $\lambda>\lambda_0$ we have
\begin{align*}\|\rho_\pi(i_\lambda)\xi-\xi\|^2
&=\sum_{[t,\pi]\in G_\pi}\|\rho_\pi(i_\lambda)\xi([t,\pi])-\xi([t,\pi])\|^2\\
&=\sum_{[t,\pi]\in F}\|\rho_\pi(i_\lambda)\xi([t,\pi])-\xi([t,\pi])\|^2<\epsilon^2.
\end{align*}
 Since $\rho_\pi(A_e) \ell^2(G_\pi, H_\pi)$ is closed by \cite[Theorem~32.22 on page~268]{Hewitt-Ross-2}, we see  \[\rho_\pi(A_e) \ell^2(G_\pi, H_\pi)=\sigma_\pi(e)\ell^2(G_\pi, H_\pi).\qedhere\]
\end{proof}

\begin{prop}\label{prop: covariant reps are unitarily equivalent}
Let $\alpha\colon S \curvearrowright A$ be an action of an inverse semigroup $S$ on a C*-algebra $A$, and let $\beta\colon S \curvearrowright \hat A$ be the induced action of $S$ on the spectrum $\hat A$ of $A$ from \Cref{prop: induced action on spectrum}.
Let $[s, \pi]\in G_\pi$. For $\xi\in \ell^2(G_\pi,H_\pi)$ and $[t, \beta_s(\pi)]\in G_{\beta_s(\pi)}$ define
    \[U_{[s,\pi]}\xi([t,\beta_s(\pi)])
    =\xi([ts,\pi]).\] 
For $[r,\pi]\in G_\pi$ and $h\in H_\pi$, let  $\chi_{[r,\pi]}^h\in\ell^2(G_\pi,H_\pi)$ be the point mass of \Cref{notation: point mass}. Then $U_{[s,\pi]}(\chi^h_{[r,\pi]})=\chi^h_{[rs^*,\beta_s(\pi)]}$, and  \[U_{[s,\pi]}\colon \ell^2(G_\pi,H_\pi)\to \ell^2(G_{\beta_s(\pi)},H_{\pi})\] 
is a unitary operator 
that implements a unitary equivalence of $\rho_\pi\times\sigma_\pi$ and $\rho_{\beta_s(\pi)}\times \sigma_{\beta_s(\pi)}$.
\end{prop}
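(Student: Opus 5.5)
The plan is to realise $U_{[s,\pi]}$ as the map induced by right translation in the groupoid $G(\hat{A},S,\beta)$. Write $\eta\coloneqq\beta_s(\pi)$. The map
\[
R\colon G_\eta\to G_\pi,\qquad [t,\eta]\mapsto [t,\eta]\cdot[s,\pi]=[ts,\pi],
\]
is a bijection, because $([t,\eta],[s,\pi])$ is composable exactly when $\eta=\beta_s(\pi)$. Its inverse is right multiplication by $[s,\pi]^{-1}=[s^*,\eta]$, that is, $[u,\pi]\mapsto [us^*,\eta]$. Well-definedness of $R$ and of its inverse follows from the groupoid structure in \Cref{prop: groupoid of germs}. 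Then $U_{[s,\pi]}\xi=\xi\circ R$, and a composition operator by a bijection is unitary, since
\[
\sum_{[t,\eta]}\|\xi(R[t,\eta])\|^2=\sum_{[u,\pi]}\|\xi([u,\pi])\|^2 .
\]
Its inverse is composition with $R^{-1}$. The same description gives the point-mass formula immediately: $\chi^h_{[r,\pi]}\circ R$ is supported at $R^{-1}[r,\pi]=[rs^*,\eta]$, with value $h$ there. We also use that $H_\eta=H_\pi$. This holds because $\beta_s(\pi)=\overline{\pi\circ\alpha_{s^*}}$ acts on $H_\pi$; we take this concrete representative, as the statement implicitly does.

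The next step is to check the two intertwining relations $U\rho_\pi(a)=\rho_\eta(a)U$ and $U\sigma_\pi(r)=\sigma_\eta(r)U$. Integrated forms are then intertwined by \eqref{eq cov rep}.

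For $\rho$, evaluate both sides at $[t,\eta]$. The left side gives $\beta_{ts}(\pi)(a)\xi([ts,\pi])$. The right side gives $\beta_t(\eta)(a)\xi([ts,\pi])=\beta_t(\beta_s(\pi))(a)\xi([ts,\pi])$. These agree because $\beta$ is an action (\Cref{prop: induced action on spectrum}), provided we use the same concrete representatives. I will fix concrete representatives via the formula $\overline{\pi\circ\alpha_{(ts)^*}}$ and note that $\overline{\overline{\pi\circ\alpha_{s^*}}\circ\alpha_{t^*}}=\overline{\pi\circ\alpha_{s^*t^*}}$ holds literally as operators. This is the computation already carried out in the proof of \Cref{prop: induced action on spectrum}.

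For $\sigma$, evaluating at $[t,\eta]$ gives:
\begin{itemize}
\item on the left, $\xi([r^*ts,\pi])$ if $\beta_{ts}(\pi)\in\hat A_{rr^*}$, and $0$ otherwise;
\item on the right, $(U\xi)([r^*t,\eta])=\xi([r^*ts,\pi])$ if $\beta_t(\eta)\in\hat A_{rr^*}$, and $0$ otherwise.
\end{itemize}
The conditions coincide since $\beta_t(\eta)=\beta_{ts}(\pi)$.

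The main obstacle is bookkeeping rather than any idea. One point is ensuring well-definedness of $U$ on germs, i.e.\ that $[t,\eta]=[t',\eta]$ implies $[ts,\pi]=[t's,\pi]$. I would check this via \Cref{lem: equivalence in omega}: if $te=t'e$ with $\eta\in\hat A_e$, then $ts(s^*es)=t's(s^*es)$ and $\pi\in\hat A_{s^*es}$. The other point is the identification of the concrete Hilbert spaces and representatives of $\beta_t(\beta_s(\pi))$ versus $\beta_{ts}(\pi)$. If these differ by a unitary, one would instead conjugate fibrewise, but with the canonical extension representatives they agree exactly.
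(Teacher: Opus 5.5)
Your proposal is correct and follows essentially the same route as the paper. Like the paper, you realise $U_{[s,\pi]}$ as composition with right translation by $[s,\pi]$, a bijection $G_{\beta_s(\pi)}\to G_\pi$, read off the point-mass formula from it, and then check separately that $U_{[s,\pi]}$ intertwines $\rho_\pi$ with $\rho_{\beta_s(\pi)}$ (using $\beta_t\circ\beta_s=\beta_{ts}$) and $\sigma_\pi$ with $\sigma_{\beta_s(\pi)}$. The differences are minor: you obtain surjectivity from the explicit inverse (composition with $R^{-1}$) rather than from density of point masses in the range, and you spell out the well-definedness on germs and the choice of concrete representatives, which the paper leaves implicit.
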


\begin{proof} To see that $U_{[s,\pi]}$ is well-defined, we observe that 
$[t,\beta_s(\pi)]$ and $[s,\pi]$  are composable  in the groupoid $G(S,\beta,\hat A)$ of germs,  with $[t,\beta_s(\pi)][s,\pi]=[ts,\pi]\in G_\pi$;  in particular, $[ts,\pi]\in G_\pi$ as needed. Furthermore,  $[t,\beta_s(\pi)]\mapsto [t,\beta_s(\pi)][s,\pi]=[ts,\pi]$  is a bijection of $G_{\beta_s(\pi)}$ onto $G_\pi$.
Thus
\begin{align*}
\|U_{[s,\pi]}\xi\|^2
&=\sum_{[t,\beta_s(\pi)]\in G_{\beta_s(\pi)}}\|U_{[s,\pi]}\xi([t,\beta_s(\pi)])\|^2\\
&=\sum_{[t,\beta_s(\pi)]\in G_{\beta_s(\pi)}}\|\xi([ts,\pi])\|^2\\
&
    =\sum_{[u,\pi]\in G_{\pi}}\|\xi([u,\pi])\|^2
    =\|\xi\|^2,
\end{align*}
showing both that $U_{[s,\pi]}$ is well-defined and is an isometry.

We have  
\begin{align*}
    U_{[s,\pi]}(\chi^h_{[r,\pi]})([t,\beta_s(\pi)])
    &=\chi^h_{[r,\pi]}([ts,\pi])
= \begin{cases}
    h&\text{if $[r,\pi]=[t,\beta_s(\pi)][s,\pi]$}\\0&\text{otherwise}
\end{cases}\\
&=\chi^h_{[r,\pi][s,\pi]^{-1}}([t,\beta_s(\pi)])
=\chi^h_{[rs^*,\beta_s(\pi)]}([t,\beta_s(\pi)]).
\end{align*}  
Thus for every $h\in H_\pi$ and $[t,\beta_s(\pi)]$ we have   $U_{[s,\pi]}(\chi^h_{[ts,\pi]})=\chi^h_{[t,\beta_s(\pi)]}$.  Since the functions of the form $\chi^h_{[t,\beta_s(\pi)]}$ span a dense subset of $\ell^2(G_{\beta_s(\pi)}, H_\pi)$ it follows that  $U_{[s,\pi]}$ has dense range and is a unitary.

To see that $U_{[s,\pi]}$ implements the unitary equivalence of the integrated forms $\rho_\pi\times\sigma_\pi$ and $\rho_{\beta_s(\pi)}\times\sigma_{\beta_s(\pi)}$, it suffices to see that $U_{[s,\pi]}$ implements unitary equivalences of  $\rho_\pi$ and $\rho_{\beta_s(\pi)}$, and $\sigma_\pi$ and $\sigma_{\beta_s(\pi)}$.  
Let $a\in A$, $r\in S$ and compute for   $[t, \beta_s(\pi)]\in G_{\beta_s(\pi)}$:
\begin{align*}
U_{[s,\pi]}(\rho_\pi(a)\xi)([t, \beta_s(\pi)])
&=(\rho_\pi(a)\xi)([ts, \pi])\\
&=\beta_{ts}(\pi)(a)(\xi([ts, \pi]))\\
&=\beta_{t}(\beta_s(\pi))(a)\big(U_{[s,\pi]}\xi\big)([t, \beta_s(\pi)])\\
&=\rho_{\beta_s(\pi)}(a)\big( U_{[s,\pi]}\xi \big)([t, \beta_s(\pi)])
\end{align*}
and 
\begin{align*}
U_{[s,\pi]}(\sigma_\pi(r)\xi)([t,\beta_s(\pi)])
&=(\sigma_\pi(r)\xi)([ts,\pi])\\
&=
\begin{cases}
    \xi([r^*ts,\pi])&\text{if $\beta_{ts}(\pi)\in \hat A_{rr^*}$}\\
    0&\text{else}
\end{cases}\\
&=\begin{cases}
    (U_{[s,\pi]}\xi)([r^*t,\beta_s(\pi)])&\text{if $\beta_{t}(\beta_{s}(\pi))\in \hat A_{rr^*}$}\\
    0&\text{else}
\end{cases}\\
&=\sigma_{\beta_s(\pi)}(r)(U_{[s,\pi]}\xi)([t,\beta_s(\pi)]).
\end{align*}
Thus  $\rho_\pi\times\sigma_\pi$ and $\rho_{\beta_s(\pi)}\times \sigma_{\beta_s(\pi)}$ are unitarily equivalent. 
\end{proof}

We now show that taking the quotient of $A\rtimes_\alg S$ by the intersection of the kernels of the regular representations $\{\rho_\pi\times\sigma_\pi:\pi\in\hat A\}$ gives a norm on $A\rtimes_\alg S $. Parts of the following proof are adapted from \cite[Proposition~4.3]{BussExelMeyer2017}.

\begin{prop}\label{prop: reduced norm is a norm}
Let $\alpha\colon S \curvearrowright A$ be an action of an inverse semigroup $S$ on a C*-algebra $A$.
For $\pi\in \hat{A}$,  let $(\rho_\pi,\sigma_\pi)$ be the covariant representation of \Cref{thm: rep induced from hat A}.
For $a\in C_c(S,A)$ set 
\[\|a\|_\red\coloneqq\sup_{\pi\in \hat{A}}\|\rho_\pi\times \sigma_\pi(a)\|.\]
Then $\|\cdot\|_\red$ is a seminorm on $C_c(S,A)$ with kernel $\II_\alpha=\vecspan\{a\delta_r-a\delta_s: r,s\in S, a\in I_{s^*,r^*}\}$ and descends to a norm, also denoted $\|\cdot\|_\red$,  on $A\rtimes_\alg S$.
\end{prop}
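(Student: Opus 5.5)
That $\|\cdot\|_\red$ is a seminorm on $C_c(S,A)$ is immediate: each $\rho_\pi\times\sigma_\pi$ is a $*$-representation, so $\|\rho_\pi\times\sigma_\pi(a)\|\le\sum_s\|a(s)\|$ and the supremum is finite. By \Cref{lem: I_alpha and cov reps} applied to each $(\rho_\pi,\sigma_\pi)$, we get $\II_\alpha\subseteq\ker\|\cdot\|_\red$. The substance is the reverse inclusion. Once it is proved, $\|\cdot\|_\red$ descends to a norm on $A\rtimes_\alg S=C_c(S,A)/\II_\alpha$.

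So fix $f=\sum_{s\in F}a_s\delta_s$ with $F$ finite and $\|\rho_\pi\times\sigma_\pi(f)\|=0$ for every $\pi\in\hat A$. The aim is to show $f\in\II_\alpha$. The first step is to compute $\rho_\pi\times\sigma_\pi(f)$ on point masses. For $h\in H_\pi$ and a unit $[e,\pi]$ with $\pi\in\hat A_e$, the definitions give
\[
\big(\rho_\pi\times\sigma_\pi(f)\chi^h_{[e,\pi]}\big)([t,\pi])=\sum_{s\in F,\ [t,\pi]=[s,\pi]}\beta_s(\pi)(a_s)h .
\]
Here $\sigma_\pi(s)\chi^h_{[e,\pi]}$ is the point mass at $[se,\pi]=[s,\pi]$ when $\pi\in\hat A_{s^*s}$ and is $0$ otherwise. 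Evaluating at $[t,\pi]$ and using that $\beta_s(\pi)$ is determined by the germ, the condition $\rho_\pi\times\sigma_\pi(f)=0$ for all $\pi$ yields the following statement.
\begin{equation*}
\text{For every }\pi\in\hat A\text{ and every germ }\gamma\in G_\pi:\quad\sum_{s\in F,\ [s,\pi]=\gamma}\pi\big(\alpha_{s^*}(a_s)\big)=0
\end{equation*}
(after transporting by $\beta_{s^*}$). Equivalently, writing $b_s=\alpha_{s^*}(a_s)\in A_{s^*s}$, each $\pi$ annihilates the germ-grouped sums of the $b_s$.

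The second step reduces this pointwise vanishing on $\hat A$ to an algebraic statement in $A$. Partition $F$ according to the relation $s\approx t$ on the set of $\pi$ where $[s,\pi]=[t,\pi]$, i.e.\ $\pi\in\hat I_{s,t}$ by \Cref{lem: equivalence in omega}. I would localize using the ideals generated by intersections and complements: for each subset $T\subseteq F$, consider the ideal $J_T$ of elements living exactly where the germs of $T$ coincide and differ from those of $F\setminus T$. A cleaner approach, adapted from \cite[Proposition~4.3]{BussExelMeyer2017}, is induction on $|F|$.

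Pick $s_0\in F$. Let $I=\sum_{t\ne s_0}I_{s_0^*,t^*}$, an ideal of $A_{s_0s_0^*}$; note that $I_{s_0^*,t^*}$ corresponds, via $\alpha_{s_0}$, to the locus where $[s_0,\cdot]=[t,\cdot]$. Decompose $a_{s_0}$ modulo $I$. On the open set of $\pi$ outside $\widehat{\alpha_{s_0^*}(I)}$, the germ of $s_0$ is alone, so $\pi(b_{s_0})=0$ there. Since an element of a C*-algebra killed by every irreducible representation of a quotient lies in the kernel ideal, $b_{s_0}$ lies in $\alpha_{s_0^*}(I)$, i.e.\ $a_{s_0}\in I$. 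Then write $a_{s_0}=\sum_{t\ne s_0}c_t$ with $c_t\in I_{s_0^*,t^*}$, up to approximation.

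The third step replaces $a_{s_0}\delta_{s_0}$ by $\sum_t c_t\delta_t$. The difference lies in $\II_\alpha$, and the new element is supported on $F\setminus\{s_0\}$, so induction applies. The main obstacle is that $I$ is a sum of ideals, so $a_{s_0}$ is only a limit of such sums. Consequently $f$ is only shown to lie in the \emph{closure} of $\II_\alpha$, with the coefficients controlled in $\|\cdot\|_\infty$. I would handle this by noting that a sum of closed ideals in a C*-algebra is closed, hence $I=I_{s_0^*,t_1^*}+\dots+I_{s_0^*,t_k^*}$ exactly, so an exact decomposition exists. I would also check carefully that the induction hypothesis holds for the modified function; this needs the germ-vanishing condition to be preserved, which is true because the modified element still lies in $\bigcap_\pi\ker(\rho_\pi\times\sigma_\pi)$ by \Cref{lem: I_alpha and cov reps}.
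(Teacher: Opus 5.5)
Your proposal is correct and follows essentially the same route as the paper: induction on $|F|$, showing that the coefficient at a chosen $s_0$ lies in the finite (hence closed) sum $\sum_{t\ne s_0}I_{s_0^*,t^*}$, decomposing it exactly, and transferring it onto the other $\delta_t$ via generators of $\II_\alpha$. The differences are cosmetic. You test against $\chi^h_{[e,\pi]}$ and work with $\alpha_{s_0^*}(a_{s_0})$ and the ideals $I_{s_0,t}$, using $\alpha_{s_0}(I_{s_0,t})=I_{s_0^*,t^*}$ (true and easy, but you should verify it). The paper instead tests against $\chi^h_{[r^*,\pi]}$ at the unit and works with $a_r$ directly, and it splits into two cases that your argument handles in one stroke.
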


\begin{proof}
Temporarily set 
\[
\II\coloneqq\{b\in C_c(S,A): \rho_\pi\times\sigma_\pi(b)=0 \text{ for all $\pi\in\hat A$.}\}
\]
We have $\II_\alpha\subseteq \II$ by \Cref{lem: I_alpha and cov reps}.  
Every $a\in C_c(S,A)$ can be written $a=\sum_{s\in F_a}a_s\delta_s$ where $F_a$ is a finite subset of $S$ and $a_s\in A_{ss^*}$. We will show that $\II\subseteq\II_\alpha$ by induction on $|F_a|$ for $a\in \II$.

Let $a\in \II$ with $|F_a|=1$, that is, $a=a_r\delta_r$ and $F_a=\{r\}$. 
Fix $\pi\in \hat{A}_{rr^*}$, fix $h\in H_\pi$,  
and let $\chi_{[r^*,\pi]}^h\in\ell^2(G_\pi,H_\pi)$ be the point mass of \Cref{notation: point mass}.
For $e\in E(S)$ we have $\beta_e=\id$ and $[r^*e,\pi]=[r^*,\pi]$, and then
\[
0=\big(\rho_\pi\times\sigma_\pi(a_r\delta_r)\chi_{[r^*,\pi]}^h\big)([e,\pi])
=\beta_e(\pi)(a_r)\chi_{[r^*,\pi]}^h)([r^*e,\pi])
=\pi(a_r)h.
\]
Thus $\pi(a_r)h=0$ for all $\pi\in \hat{A}_{rr^*}$ and $h\in H_\pi$, and hence $a_r=0$. Thus $a_r\delta_r=0\in \II_\alpha$.

Let $n\geq 1$ and assume that if $b=\sum_{s\in F_b}b_s\delta_s\in \II$ with $|F_b|\leq n$, then $b\in \II_\alpha$. 
Let $a=\sum_{s\in F_a}a_s\delta_s\in\II$ with $|F_a|=n+1$. 
 Pick $r\in F_a$.
There are  two cases to consider: $\hat{A}_{rr^*}=\cup_{s\in F_a\setminus\{r\}}\hat{I}_{s^*,r^*}$ and $\hat{A}_{rr^*}\setminus\cup_{s\in F_a\setminus\{r\}}\hat{I}_{s^*,r^*}\neq \emptyset$ .

   First, suppose that $\hat{A}_{rr^*}=\cup_{s\in F_a\setminus\{r\}}\hat{I}_{s^*,r^*}$. Thus $A_{rr^*}=\overline{\sum_{s\in F_a\setminus\{r\}}I_{s^*,r^*}}$. Since the span of finitely many closed ideals is again a closed ideal by \cite[Theorem~3.1.7]{Murphy1990} we have $A_{rr^*}=\sum_{s\in F_a\setminus\{r\}}I_{s^*,r^*}$. Now there exist $c_s\in I_{s^*,r^*}$ such that 
    \begin{equation}\label{eq: back to here}
    a_r=\sum_{s\in F_a\setminus\{r\}}c_s.
    \end{equation}
We have 
\[a_r\delta_r-\sum_{s\in F_a\setminus\{r\}}c_s\delta_s=\sum_{s\in F_a\setminus\{r\}}c_s\delta_r-c_s\delta_s\in \II_\alpha.\] 
Since $\II_\alpha\subseteq \II$, we have 
 $a-\sum_{s\in F_a\setminus\{r\}}(c_s\delta_r-c_s\delta_s)\in \II$.  Moreover, 
    \begin{align*}
        a-\sum_{s\in F_a\setminus\{r\}}(c_s\delta_r-c_s\delta_s)
        &=\sum_{s\in F_a}a_s\delta_s-\sum_{s\in F_a\setminus\{r\}}(c_s\delta_r-c_s\delta_s)\\
        &=\sum_{s\in F_a\setminus\{r\}}(a_s+c_s)\delta_s
        +a_r\delta_r-\Big(\sum_{s\in F_a \setminus\{r\}}c_s\Big)\delta_r\\
        &=\sum_{s\in F_a\setminus\{r\}}(a_s+c_s)\delta_s.
    \end{align*}
    Now, by the  induction hypothesis, $a-\sum_{s\in F_a\setminus\{r\}}c_s\delta_r-c_s\delta_s\in\II_\alpha$, and then so  is $a$.

Second, suppose that $\hat{A}_{rr^*}\setminus\cup_{s\in F_a\setminus\{r\}}\hat{I}_{s^*,r^*}\neq \emptyset$.  Fix $\pi \in \hat{A}_{rr^*}\setminus\cup_{s\in F_a\setminus\{r\}}\hat{I}_{s^*,r^*}$ and $h\in H_\pi$. For $e\in E(S)$ we have 
\[
        0=(\rho_\pi\times\sigma_\pi(a)\chi_{[r^*,\pi]}^h)([e,\pi])
        =\sum_{\stackrel{s\in F_a}{[r^*,\pi]=[s^*e,\pi]}}\beta_e(\pi)(a_s)h
       =\pi(a_r)h.
\]
because $[r^*,\pi]=[s^*e,\pi]$ if and only if $\pi\in \hat I_{s^*, r^*}$ by \Cref{lem: equivalence in omega}.   
We conclude that $\pi(a_r)=0$ for all $\pi \in \hat{A}_{rr^*}\setminus\cup_{s\in F_a\setminus\{r\}}\hat{I}_{s^*,r^*}$.  Thus $a_r\in \langle I_{s^*,r^*}: s\in F_a\setminus\{r\}\rangle$ so that $a_r=\sum_{s\in F_a\setminus\{r\}}c_s$ for some $c_s\in I_{s^*, r^*}$ as at \Cref{eq: back to here}. Now repeat the argument above to get $a\in \II_\alpha$ as needed. 
\end{proof}

\begin{cor} Let $\alpha\colon S \curvearrowright A$ be an action of an inverse semigroup $S$ on a C*-algebra $A$. Then
    \[\II_\alpha=\cap\{\ker(\rho\rtimes\sigma): (\rho,\sigma) \text{ is a covariant representation}\}.\]
\end{cor}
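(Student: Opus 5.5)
The proof is a sandwich argument: two inclusions that have essentially already been established. Write $\mathcal K\coloneqq\cap\{\ker(\rho\times\sigma): (\rho,\sigma)\text{ is a covariant representation}\}$, where the kernels are taken in $C_c(S,A)$.

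For the inclusion $\II_\alpha\subseteq\mathcal K$, I invoke \Cref{lem: I_alpha and cov reps}, which says that $\II_\alpha\subseteq\ker(\rho\times\sigma)$ for every covariant representation $(\rho,\sigma)$. Intersecting over all covariant representations gives $\II_\alpha\subseteq\mathcal K$.

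For the reverse inclusion, I note that each pair $(\rho_\pi,\sigma_\pi)$ with $\pi\in\hat A$ is a covariant representation by \Cref{thm: rep induced from hat A}. So the intersection defining $\mathcal K$ runs over a larger family than the one indexed by $\hat A$, and hence
\[
\mathcal K\subseteq \cap_{\pi\in\hat A}\ker(\rho_\pi\times\sigma_\pi).
\]
By definition of $\|\cdot\|_\red$, the right-hand side is exactly the kernel of the seminorm $\|\cdot\|_\red$ on $C_c(S,A)$. \Cref{prop: reduced norm is a norm} identifies that kernel as $\II_\alpha$. Combining the two inclusions gives $\mathcal K=\II_\alpha$.

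There is no real obstacle, since the substantive work (the induction on the size of the support) was done in \Cref{prop: reduced norm is a norm}. The one point to check is that the kernels are consistently taken in $C_c(S,A)$ rather than in $A\rtimes_\alg S$ or $A\rtimes_\full S$. This is how both \Cref{lem: I_alpha and cov reps} and \Cref{prop: reduced norm is a norm} are phrased, so the argument is consistent.
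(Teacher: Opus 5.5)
Your proposal is correct and is the paper's argument: the chain $\II_\alpha\subseteq\bigcap\{\ker(\rho\times\sigma):(\rho,\sigma)\text{ covariant}\}\subseteq\bigcap_{\pi\in\hat A}\ker(\rho_\pi\times\sigma_\pi)=\II_\alpha$. It uses \Cref{lem: I_alpha and cov reps} for the first inclusion and \Cref{prop: reduced norm is a norm} for the final equality. Your check that every kernel is taken in $C_c(S,A)$ is the one consistency point needed, and it holds.
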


\begin{proof}
    Using first \Cref{lem: I_alpha and cov reps} and then \Cref{prop: reduced norm is a norm} we have
\begin{align*}
     \II_\alpha
     &
     \subseteq \cap\{\ker(\rho\rtimes\sigma): (\rho,\sigma) \text{ is a covariant representation}\}
     \\
     &\subseteq \cap\{\ker(\rho_\pi\times\sigma_\pi): \pi\in\hat A\}=\II_\alpha.\qedhere    
\end{align*}
\end{proof}

Let $X$ be a topological space and $V\subseteq X$. We say that  $V$  is  \emph{nowhere dense} if its closure has empty interior and say $V$ is \emph{meagre} if it is the countable union of nowhere dense subsets.  Also,  a subset  is \emph{comeagre} if its complement is meagre.

Let $a\in A\rtimes_\alg S$.
Motivated by \cite[Definition~4.9]{KwasniewskiMeyer2021} we define 
\begin{equation}\label{eq ess norm}\|a\|_\ess\coloneqq \inf\Big\{\sup_{\pi\in V}\|\rho_\pi\times \sigma_\pi(a)\|\colon  
V\subseteq \hat{A}  \text{ is comeagre}\Big\}.
\end{equation}
In general,  $\|\cdot\|_\ess$ is only a seminorm on $A\rtimes_\alg S$; for example \cite[Example~2.25]{ACaHMT} shows that $\|\cdot\|_\red$ and $\|\cdot \|_{\ess}$ may not coincide. We set $\mathcal{N}_{\ess}\coloneqq\{a\in A\rtimes_\alg S:\|a\|_{\ess}=0\}$. 

\begin{defn}\label{defn our crossed products}
    Let $\alpha\colon S \curvearrowright A$ be an action of an  inverse semigroup $S$ on a C*-algebra $A$. Define $\Red(A,S,\alpha)$ to be the C*-algebraic (hence Hausdorff) completion of $A\rtimes_\alg S$ with respect to the norm $\|\cdot\|_r$ of \Cref{prop: reduced norm is a norm} and define $\Ess(A,S,\alpha)$  to be the C*-algebraic completion of $(A\rtimes_\alg S)/\mathcal{N}_\ess$ with respect to the seminorm $\|\cdot\|_\ess$ of \Cref{eq ess norm}. 
\end{defn}
Since $\|a\|_r\ge\|a\|_\ess$ we see that $\Ess(A,S,\alpha)$ is  a quotient of $\Red(A,S,\alpha)$.
In the next section we will prove that $\Red(A,S,\alpha)$  is isomorphic to the reduced   crossed product in \cite{BussExelMeyer2017}, and,  if $S$ is quasi-countable, then $\Ess(A,S,\alpha)$ is isomorphic to the essential crossed product in \cite{KwasniewskiMeyer2021}.

\section{Reconciliation of the C*-algebras}\label{sec reconcile}
Let $\alpha\colon S \curvearrowright A$ be an action of an  inverse semigroup $S$ on a C*-algebra $A$. 
In this section we will show that the C*-algebras $\Red(A,S,\alpha)$ and $\Ess(A,S,\alpha)$ from  \Cref{defn our crossed products} are isomorphic to, respectively,   the  reduced  crossed product $A\rtimes_\red S$ from \cite[Definition~4.1]{BussExelMeyer2017} and, when $S$ is quasi-countable, the essential crossed product $A\rtimes_\ess S$ from \cite[Definition~4.4]{KwasniewskiMeyer2021}.
Since \cite{BussExelMeyer2017} and \cite{KwasniewskiMeyer2021} assume that $S$ is unital, we do so as well; since we are only interested in isomorphism classes, by \Cref{lem A embeds} this assumption results in no loss of generality. We denote the identity in $S$ by $1$ and note that $A$ embeds in $A\rtimes_\full  S$ via $a\mapsto a\delta_1$. We also use \cref{lem: equivalence in omega} to rewrite $[e,\pi]=[1,\pi]$ for all $[e,\pi]\in G(\hat{A},S,\beta)^{(0)}$.
We start with the background needed to understand the definitions of  $A\rtimes_\red S$ and $A\rtimes_\ess S$.

Let $H$ be a Hilbert space and $M\subseteq B(H)$. We denote by $M'$ the commutant \[M'=\{T\in B(H): TS=ST\text{\ for all\ }S\in M\}.\] Let $A$ be a C*-algebra. We write $A''$ for the enveloping von Neumann algebra of $A$. Thus $A''$ is the bicommutant of the image $\pi_u(A)$ of $A$ under the universal representation $\pi_u$ of $A$. 
The \emph{local multiplier algebra} $\Mloc(A)$ of $A$ is
the direct limit 
$
\varinjlim \MM(J)
$
of the multiplier algebras $\MM(J)$ over all essential ideals $J \subseteq A$; see \cite{Ara-Mathieu}.

Let $I$ be an ideal of $A$. Then the \emph{complement of $I$ in $A$} is the ideal
$
I^\perp \coloneqq \{ x \in A : xI = Ix = \{0\}\}
$. The   ideal $I \oplus I^\perp$ of $A$ is always essential.

\begin{thm}\label{thm ER and EL} Let $\alpha\colon S \curvearrowright A$ be an action of a unital inverse semigroup $S$ on a C*-algebra $A$. 
    \begin{enumerate}
        \item\label{item: ER existence and properties} \textup{(See \cite[Lemma~4.5]{BussExelMeyer2017} or
        \cite[Proposition~3.16]{KwasniewskiMeyer2021})} For $s\in S$, write $1_s$ for the identity of $I_{s,1}''$ where $I_{s,1}''$ is viewed as an ideal of $A''$.   There exists a completely positive, linear and  contractive map \[\ER\colon A \rtimes_\full S \to A''\] such that $\ER(a_s\delta_s) = a_s 1_s$ for all $s \in S$ and $a_s \in A_{ss^*}$.
        \item\label{item: EL existence and properties} 
        \textup{(See \cite[Proposition~4.3]{KwasniewskiMeyer2021})}
        For $s\in S$, define $i_s \in \MM(I_{s,1}\oplus I_{s,1}^\perp)$ by $i_s(a\oplus b) \coloneqq a$ for $a \in I_{s,1}$ and $b \in I_{s,1}^\perp$.  There exists a completely positive, linear and contractive map \[\EL\colon A \rtimes_\full S \to \Mloc(A)\] such that $\EL(a_s \delta_s) = a_s i_s$ for all $s \in S$ and $a_s \in A_{ss^*}$.
    \end{enumerate}
\end{thm}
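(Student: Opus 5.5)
The statement collects two results from the literature, and I would prove both by the same three steps. First, define the map on $C_c(S,A)$ by the stated formula. Second, check that it kills $\II_\alpha$, so that it descends to $A\rtimes_\alg S$. Third, show it is completely positive and contractive for $\|\cdot\|_\full$, so that it extends to $A\rtimes_\full S$. The third step is where the work lies.

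\emph{Definition and vanishing on $\II_\alpha$.} For part (\ref{item: ER existence and properties}) put $E_0(\sum a_s\delta_s)=\sum a_s1_s\in A''$. Each $1_s$ is a central projection of $A''$, namely the support projection of the ideal $I_{s,1}''$. Take a spanning element $a\delta_s-a\delta_r$ of $\II_\alpha$ with $a\in I_{s^*,r^*}$. It suffices to show $a1_s=a1_r$, and for this it is enough that $a1_s=a$ whenever $a\in I_{s^*,r^*}$. That holds because $I_{s^*,r^*}\subseteq I_{s,1}$: if $v\le s^*,r^*$ then $v^*v\le s^*s$, and $A_{v^*v}$ lies in an ideal on which $\alpha_s$ agrees with $\alpha_1$ (use \Cref{lem: I_st properties}). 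For part (\ref{item: EL existence and properties}) the same computation works with $i_s$ in place of $1_s$, since $i_s$ acts as the identity on $I_{s,1}$ inside $\Mloc(A)$.

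\emph{Complete positivity and contractivity for $\ER$.} I would realise $\ER$ as a compression of a covariant representation, in the spirit of the $\ell^2(G_\pi,H_\pi)$ construction of \Cref{thm: rep induced from hat A}. For an irreducible $\pi$, one computes
\[
\big(\rho_\pi\times\sigma_\pi(\textstyle\sum a_s\delta_s)\chi^h_{[1,\pi]}\mid \chi^k_{[1,\pi]}\big)=\sum_{s:\ \pi\in\hat I_{s,1}}(\pi(a_s)h\mid k).
\]
By \Cref{lem: equivalence in omega}, $[s,\pi]=[1,\pi]$ exactly when $\pi\in \hat I_{s,1}$, and this happens exactly when $\pi''(1_s)=1$. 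Hence $V_\pi^*(\rho_\pi\times\sigma_\pi)(\cdot)V_\pi=\pi''\circ E_0$, where $V_\pi h=\chi^h_{[1,\pi]}$. That gives complete positivity and contractivity of $E_0$ composed with the atomic representation of $A''$, and therefore on the atomic part of $A''$.

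To capture all of $A''$ I would repeat the construction with an arbitrary non-degenerate representation $\pi$ of $A$ on $H$, in particular the universal one. Here the fibre $G_\pi$ is not available. In its place I would use the Hilbert space completion of the algebraic tensor product of $\bigoplus_{s}\pi''(1_{s^*s}) H$ modulo the germ relation: identify the $s$-copy with the $t$-copy on the range of $\pi''(1_{I_{s,t}})$. On this space, left translation gives $\sigma$ and a twisted diagonal action gives $\rho$, and the unit copy $V\colon H\to K$ again satisfies $V^*(\rho\times\sigma)V=\pi''\circ E_0$. Since $\rho\times\sigma$ is a representation of $A\rtimes_\full S$, this compression is completely positive with norm at most $\|V\|^2=1$, and so $E_0$ extends to $\ER$.

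\emph{Part (\ref{item: EL existence and properties}) and the main obstacle.} For $\EL$ I would instead work in $\MM(J)$ with $J=\bigcap_{s\in F}(I_{s,1}\oplus I_{s,1}^\perp)$ over finite sets $F$. This $J$ is essential, and the $i_s$ are central projections in $\MM(J)$, so the same compression argument with $\MM(J)$ in place of $A''$ gives a completely positive contraction on each finitely supported piece, compatible with the inclusions of the direct limit. The main obstacle is the general-representation construction in the previous paragraph. Specifically, one must check that the germ identification is consistent, i.e.\ that the projections $\pi''(1_{I_{s,t}})$ are compatible under composition, so that $\sigma$ is well defined and satisfies $\sigma_{st}=\sigma_s\sigma_t$. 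I would model this verification on the proof of \Cref{thm: rep induced from hat A}, replacing each pointwise condition ``$\beta_t(\pi)\in\hat A_e$'' by multiplication by the corresponding central projection of $A''$.
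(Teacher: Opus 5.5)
The paper does not prove this statement; it quotes \cite[Lemma~4.5]{BussExelMeyer2017} and \cite[Propositions~3.16 and~4.3]{KwasniewskiMeyer2021}. Your self-contained route to part~(1) is sound in outline. Compressing a covariant representation built from the universal representation is a hand-made version of $\ell^2(S,A'')\otimes_{\pi''}H$ (compare \Cref{prop reconcile induced reps}). The point that really needs checking there is that $(\delta_s\otimes\xi\mid\delta_t\otimes\eta)\coloneqq(\pi''(1_{I_{s,t}})\xi\mid\eta)$ is positive semidefinite. This holds because the $1_{I_{s,t}}$ are commuting central projections and $I_{s,t}\cap I_{t,u}\subseteq I_{s,u}$.

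You can even avoid that construction. The atomic representation is isometric on the $*$-algebra $\operatorname{span}\{a1_I\}$: split along the finitely many central projections involved, and on each atom the norm is that of a multiplier of a subquotient of $A$, which irreducible representations detect. So your irreducible compressions already suffice.

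However, your check that $E_0$ kills $\II_\alpha$ is wrong. In general $I_{s^*,r^*}\not\subseteq I_{s,1}$: for a group action and $r=s\neq1$ we have $I_{s^*,r^*}=A$ but $I_{s,1}=\{0\}$, so $a1_s=a$ fails. What is true is $a1_s=a1_r$ for $a\in I_{s^*,r^*}$, because $I_{s^*,r^*}\cap I_{s,1}=I_{s^*,r^*}\cap I_{r,1}$. Indeed, if $v\le s^*,r^*$ and $e\le s$ is idempotent, then $ve$ is an idempotent below $r^*$, hence below $r$, and $A_{v^*v}\cap A_e=A_{(ve)^*ve}$. The step is redundant anyway: once $V^*(\rho\times\sigma)V=\pi''\circ E_0$ with $\pi''$ faithful, \Cref{lem: I_alpha and cov reps} forces $E_0(\II_\alpha)=0$.

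The genuine gap is in part~(2). ``The same compression argument with $\MM(J)$ in place of $A''$'' cannot be run. The covariant representation you compress involves every germ, so it needs projections for $I_{s,t}$ with arbitrary $s,t\in S$. Its twisted diagonal $\rho$ also needs $\alpha$ to act on the ambient algebra. But $\MM(J)$, with $J=\bigcap_{s\in F}(I_{s,1}\oplus I_{s,1}^\perp)$, contains only the finitely many $i_s$ with $s\in F$. Moreover $J$ is not $\alpha$-invariant, so $\alpha$ does not act on $\MM(J)$. And you cannot intersect over all of $S$, since infinite intersections of essential ideals need not be essential. Also, ``completely positive on each finitely supported piece'' has no meaning by itself: these pieces are not operator systems, and $y^*y$ is supported on $F^*F$.

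The missing idea is that $\EL$ is $\ER$ followed by a $*$-homomorphism, so part~(2) follows from part~(1). Put $\EL_0(\sum a_s\delta_s)=\sum a_si_s$. Given finite $F$, let $\theta$ be a faithful non-degenerate representation of $J=J_F$ on $H$. Extend it faithfully to $\MM(J)\supseteq A$, which is possible because $J$ is essential. Then $\theta|_A$ is non-degenerate. Both $\theta(i_s)$ and $(\theta|_A)''(1_s)$ are the projection onto $\overline{\theta(I_{s,1})H}$. This is because $\theta(i_s)$ is the identity on $\theta(I_{s,1})H$, vanishes on $\theta(I_{s,1}^\perp)H$, and these two spaces together span a dense subspace.

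Hence $\theta(\EL_0(x))=(\theta|_A)''(\ER(x))$ for $x$ supported in $F$, and likewise at every matrix level. The matrix amplifications of $\theta$ are isometric and reflect positivity on $M_n(\MM(J))$, and the connecting maps of $\Mloc(A)=\varinjlim\MM(J)$ are injective. Therefore $\EL_0$ is contractive for $\|\cdot\|_\full$ and kills $\II_\alpha$. It also sends each $y^*y$ to a positive element at every matrix level (take $F$ containing the support of $y^*y$). So $\EL_0$ extends to the required $\EL$.
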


With $\ER$ and $\EL$ as in \Cref{thm ER and EL}, set
\begin{equation}\label{def N_E and N_EL}
\NN_\ER \coloneqq \{a \in A \rtimes_\full S : \ER(a^*a) = 0\} \quad \text{ and } \quad \NN_\EL \coloneqq \{a \in A \rtimes_\full S : \EL(a^*a) = 0\}.
\end{equation}
By \cite[Remark~4.5]{KwasniewskiMeyer2021}, $\NN_\ER$ and $\NN_\EL$ are ideals of $A \rtimes_\full S$ such that $\NN_\ER \subseteq \NN_\EL$.  The reduced crossed product $A\rtimes_\red S$ is defined in \cite[Definition~4.1]{BussExelMeyer2017}; here we will give a definition that is equivalent by \cite[Proposition~3.16]{KwasniewskiMeyer2021}.

\begin{defn} \label{defn: reduced and essential crossed products}
\textup{(See \cite[Proposition~3.16]{KwasniewskiMeyer2021} and \cite[Definition~4.4]{KwasniewskiMeyer2021})}
Let $\alpha\colon S \curvearrowright A$ be an action of a unital inverse semigroup on a C*-algebra $A$. The \emph{reduced crossed product} 
and the \emph{essential crossed product of $(A, S, \alpha)$} are 
\[
A \rtimes_\red S \coloneqq (A \rtimes_\full S) / \NN_\ER \quad \text{ and } \quad A \rtimesess S \coloneqq (A \rtimes_\full S) / \NN_\EL,
\]
respectively. 
\end{defn}

Let $\pi\colon A\to B(H)$ be a non-degenerate representation of $A$. By \cite[Theorem~3.7.7]{Pedersen-book} there exists a unique normal weakly-continuous extension $\pi''\colon A''\to \pi(A)''\subseteq B(H)$ of $\pi$.  For the proof of \Cref{prop: covariant rep implements expectation} we need the following lemma. 

\begin{lemma}\label{lemma: writing pi with projections}
  Let $I$ be an ideal of a C*-algebra $A$ and let $\pi\in \hat A$
  such that $\pi|_I\neq 0$.
  Write $1_I$ for the identity of $I''$. Then $\pi''(a)=\pi''(1_Ia)$
  for  $a\in A''$ and $\pi(a)=\pi''(1_Ia)$ for $a\in A$. 
\end{lemma}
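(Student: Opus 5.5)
The plan is to work inside the enveloping von Neumann algebra $A''$ and use the facts that $I''$ is a weak$^*$-closed two-sided ideal of $A''$ with identity $1_I$, a central projection, and that $\pi''$ is normal. First I would show that $\pi''(1_I)=1_{H_\pi}$. Since $I$ is an ideal and $\pi|_I\neq 0$, irreducibility of $\pi$ makes $\pi|_I$ non-degenerate, as recalled in the preliminaries. Let $\{i_\lambda\}$ be an approximate identity for $I$. Then $i_\lambda\to 1_I$ strongly in $A''$; indeed, in the universal representation the strong limit of $\{i_\lambda\}$ is the identity of $I''$. Normality of $\pi''$ then gives $\pi''(i_\lambda)\to\pi''(1_I)$ strongly. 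On the other hand, $\pi''(i_\lambda)=\pi(i_\lambda)\to 1_{H_\pi}$ strongly, by non-degeneracy of $\pi|_I$. Hence $\pi''(1_I)=1_{H_\pi}$.

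With this in hand, the first claim follows from multiplicativity of $\pi''$ on $A''$. For $a\in A''$ we get
\[
\pi''(1_Ia)=\pi''(1_I)\pi''(a)=\pi''(a).
\]
The second claim is the special case $a\in A\subseteq A''$, using $\pi''|_A=\pi$.

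The only step needing care is the identification of the strong limit of the approximate identity of $I$ with $1_I$, together with the normality of $\pi''$. Both are standard. If needed, I would justify the first by noting that $I''$ is the weak closure of $\pi_u(I)$, and that the increasing net $\pi_u(i_\lambda)$ converges strongly to the projection onto $\overline{\pi_u(I)H_u}$. This projection is the unit of $I''$.
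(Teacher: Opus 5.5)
Your proof is correct, but it takes a different route from the paper's.

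The paper never computes $\pi''(1_I)$. It first proves that $1_I$ is central in $A''$, which makes $\psi(a)\coloneqq\pi''(1_Ia)$ a normal $*$-homomorphism of $A''$. It then observes $\psi|_I=\pi|_I$ and applies two uniqueness results in turn. The first is the unique extension of the non-degenerate representation $\pi|_I$ from the ideal $I$ to $A$, which gives $\psi|_A=\pi$. The second is the uniqueness of the normal extension of $\pi$ to $A''$, which gives $\psi=\pi''$.

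You instead prove the stronger intermediate fact $\pi''(1_I)=1_{H_\pi}$ directly. You realise $1_I$ as the strong limit of an approximate identity of $I$ in the universal representation and use normality of $\pi''$; multiplicativity of $\pi''$ then finishes the job. Your argument never uses centrality of $1_I$, although you mention it.

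The cost of your route is that you must justify identifying the unit of $I''\subseteq A''$ with the limit of the approximate identity. Your sketch of this is right. Note that any bounded approximate identity works. Also, weak convergence $\pi''(i_\lambda)\to\pi''(1_I)$ is enough, since weak limits are unique and $\pi(i_\lambda)\to 1_{H_\pi}$ strongly. The paper avoids limits altogether by appealing to uniqueness of extensions.

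A bonus of your version is that it makes $\pi''(1_I)=1_{H_\pi}$ explicit. The paper uses this fact later, for instance when it asserts in \Cref{prop reconcile induced reps} that $\pi''(1_{t^*s})$ is the identity on $H_\pi$. In the paper's approach that fact has to be recovered by applying the lemma to $a=1\in A''$.
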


\begin{proof}
    We note that $1_I$ is central in $A''$: for if $a\in A''$, then $1_Ia\in I''$ implies $1_Ia=1_Ia1_I=a1_I$. Define  $\psi\colon A''\to B(H_\pi)$ by $\psi(a)=\pi''(1_Ia)$. Since $1_I$ is central in $A''$ it follows that $\psi$ is a *-homomorphism. Notice that $\psi|_I=\pi|_I$.
    Since $I$ is an ideal of $A$ and $\pi$ is irreducible, $\pi|_{I}$ is non-degenerate.  Now $\psi|_{A}=\overline{\psi|_{I}}=\pi$ by the uniqueness of the extension.
    Finally, since $\psi$ is a normal weakly-continuous representation of $A''$ that extends $\pi$  we have $\psi=\pi''$. Now $\pi''(a)=\pi''(1_Ia)$ for all $a\in A''$, and then the last assertion follows. 
\end{proof}

Let $\pi\in \hat{A}$. The \emph{orbit} of $\pi$, denoted by $S\cdot\pi$, is  
$\{\beta_s(\pi): s\in S\}$.

\begin{prop}\label{prop: covariant rep implements expectation}
    Let $\alpha\colon S \curvearrowright A$ be an action of a unital inverse semigroup $S$ on a C*-algebra $A$. Let $a\in A\rtimes_\full S$.
    Let $\pi \in \hat{A}$ and  $h\in H_\pi$, and let $\chi^h_{[1,\pi]}$ be as in \Cref{notation: point mass}. Then  
    \begin{equation}\label{eq pi'' of E}
    \pi''(\ER(a))h=\big(\rho_\pi\times\sigma_\pi(a)\chi^h_{[1,\pi]}\big)([1,\pi]).
    \end{equation}
    Furthermore, $\rho_\pi\times\sigma_\pi(a)=0$ if and only if $\eta''(\ER(a^*a))=0$ for all $\eta\in S\cdot\pi$.
\end{prop}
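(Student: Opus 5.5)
Both sides of \eqref{eq pi'' of E} are linear in $a$ and continuous for the norm of $A\rtimes_\full S$. On the left, $\ER$ is contractive and $\pi''$ is a representation; on the right, $\rho_\pi\times\sigma_\pi$ is a representation and evaluation at $[1,\pi]$ is bounded. It therefore suffices to verify \eqref{eq pi'' of E} on spanning elements $a=a_s\delta_s$ with $s\in S$ and $a_s\in A_{ss^*}$.

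For such an element, \Cref{thm ER and EL} gives $\ER(a_s\delta_s)=a_s1_s$, where $1_s$ is the identity of $I_{s,1}''$. We then compute both sides.

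\emph{Right-hand side.} By definition,
\[
(\rho_\pi(a_s)\sigma_\pi(s)\chi^h_{[1,\pi]})([1,\pi])=\pi(a_s)\big((\sigma_\pi(s)\chi^h_{[1,\pi]})([1,\pi])\big).
\]
The inner value is $\chi^h_{[1,\pi]}([s^*,\pi])$ when $\pi\in\hat A_{ss^*}$, and is $0$ otherwise. By \Cref{lem: equivalence in omega} we have $[s^*,\pi]=[1,\pi]$ if and only if $\pi\in\hat I_{s^*,1}$. Since $I_{s^*,1}=I_{s,1}$ by \Cref{lem: I_st properties} (take $e=1$), the right-hand side equals $\pi(a_s)h$ if $\pi\in\hat I_{s,1}$ and $0$ otherwise. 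When $\pi\notin\hat A_{ss^*}$ it is $0$ in any case, and this agrees with $\pi(a_s)=0$.

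\emph{Left-hand side.} Suppose first that $\pi\in\hat I_{s,1}$. By \Cref{lemma: writing pi with projections} applied with $I=I_{s,1}$, we get $\pi''(a_s1_s)=\pi''(1_sa_s)=\pi(a_s)$. Suppose instead that $\pi|_{I_{s,1}}=0$. Then $\pi''$ vanishes on $I_{s,1}''$, since $\pi''$ is normal and $I_{s,1}$ is weakly dense in $I_{s,1}''$. As $a_s1_s\in I_{s,1}''$, it follows that $\pi''(a_s1_s)=0$. This proves \eqref{eq pi'' of E}.

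\emph{The ``furthermore'' assertion.} Apply \eqref{eq pi'' of E} to $a^*a$:
\[
(\pi''(\ER(a^*a))h\mid h)=\big(\rho_\pi\times\sigma_\pi(a)\chi^h_{[1,\pi]}\mid\rho_\pi\times\sigma_\pi(a)\chi^h_{[1,\pi]}\big)=\|\rho_\pi\times\sigma_\pi(a)\chi^h_{[1,\pi]}\|^2,
\]
where the first equality holds because $\chi^h_{[1,\pi]}$ is a point mass at $[1,\pi]$. Since $\pi''(\ER(a^*a))$ is positive, it vanishes exactly when $\rho_\pi\times\sigma_\pi(a)$ kills every $\chi^h_{[1,\pi]}$.

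To reach the other point masses we transport with \Cref{prop: covariant reps are unitarily equivalent}. Take $\eta=\beta_t(\pi)$ with $[t,\pi]\in G_\pi$. The unitary $U_{[t,\pi]}$ intertwines $\rho_\pi\times\sigma_\pi$ with $\rho_\eta\times\sigma_\eta$, and it sends $\chi^h_{[t,\pi]}$ to $\chi^h_{[1,\eta]}$ (the formula there gives $[tt^*,\eta]=[1,\eta]$). Hence $\rho_\pi\times\sigma_\pi(a)\chi^h_{[t,\pi]}=0$ if and only if $\eta''(\ER(a^*a))h=0$.

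The vectors $\chi^h_{[t,\pi]}$ span a dense subspace of $\ell^2(G_\pi,H_\pi)$. Every $\eta\in S\cdot\pi$ arises as $\beta_t(\pi)$ for some $t$ with $\pi\in\hat A_{t^*t}$, so $[t,\pi]\in G_\pi$. Therefore $\rho_\pi\times\sigma_\pi(a)=0$ if and only if $\eta''(\ER(a^*a))=0$ for all $\eta\in S\cdot\pi$.

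The main obstacle is the left-hand computation. It needs care to confirm that $\pi''$ annihilates $I_{s,1}''$ when $\pi|_{I_{s,1}}=0$, and to check that $1_s$ is really the central projection to which \Cref{lemma: writing pi with projections} applies.
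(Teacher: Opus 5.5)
Your proposal is correct and follows essentially the same route as the paper. You reduce by continuity to $a_s\delta_s$, compute both sides using \Cref{lem: equivalence in omega} and \Cref{lemma: writing pi with projections}, and then transport point masses with the unitaries $U_{[t,\pi]}$ of \Cref{prop: covariant reps are unitarily equivalent}. Your version differs only cosmetically: you phrase the second part as a direct equivalence via $(\pi''(\ER(a^*a))h\mid h)=\|\rho_\pi\times\sigma_\pi(a)\chi^h_{[1,\pi]}\|^2$ rather than by contradiction, and you justify $\pi''(a_s1_s)=0$ when $\pi|_{I_{s,1}}=0$ more explicitly than the paper does.
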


\begin{proof}[Proof of \Cref{prop: covariant rep implements expectation}]
By continuity it suffices to prove \Cref{eq pi'' of E} for $a\in A\rtimes_\alg S$, that is, for $a=\sum_{s\in F}a_s\delta_s$ for a
 finite subset $F$ of $S$, and by linearity it suffices to consider $a=a_s\delta_s$ for a single $s\in S$. 
Since $1$ is a projection,  $I_{1,s}=I_{1,s^*}\subseteq A_{ss^*}\cap A_{s^*s}$. Now by \Cref{lem: equivalence in omega} we have $[1,\pi]=[s^*,\pi]$ if and only if $\pi\in\hat I_{1,s^*}=\hat I_{1,s}$ if and only if $[1,\pi]=[s,\pi]$. 
Thus
    \begin{align*}
        \big(\rho_\pi\times\sigma_\pi(a_s\delta_s)\chi^h_{[1,\pi]}\big)([1,\pi])
        &=\pi(a_s)(\sigma_\pi(s)\chi^h_{[1,\pi]})([1,\pi])\\
        &=\begin{cases}
            \pi(a_s)\chi^h_{[1,\pi]}([s^*,\pi]) &\text{if $\pi\in \hat A_{ss^*}$}\\
            0&\text{else}
        \end{cases}\\
        &=\begin{cases}
            \pi(a_s)h &\text{if $\pi\in \hat I_{1,s}$}\\
            0&\text{else}
        \end{cases}\\
         &=\begin{cases}
            \pi''(1_{I_{1,s}}a_s)h &\text{if $\pi\in \hat I_{1,s}$}\\
            0&\text{else}
        \end{cases}
        \end{align*}
 by  \Cref{lemma: writing pi with projections}.
 Since $\pi\in \hat I_{1,s}$ if and only if $\pi|_{I_{1,s}}\neq 0$ if and only if $\pi''(1_{I_{1,s}})\neq 0$ we see that 
\[(\rho_\pi\times\sigma_\pi(a_s\delta_s)\chi^h_{[1,\pi]})([1,\pi])=\pi''(1_{I_{1,s}}a_s)h=\pi''(\ER(a_s\delta_s))h\] as needed. 
Thus \Cref{eq pi'' of E} holds for all $a\in A\rtimes_\full S$.

To prove the second statement we 
adapt the argument from
\cite[Proposition~4.2.6]{Sims-Szabo-Williams}. 
Fix $a\in A\rtimes_\full S$. 
Fix $\pi\in \hat{A}$ and suppose that $\eta''(\ER(a^*a))=0$ for all $\eta\in S\cdot\pi$. Suppose, towards a contradiction, that  $\rho_\pi\times \sigma_\pi(a)\ne 0$. Since a dense subspace of  $\ell^2(G_\pi,H_\pi)$ is spanned by point masses $\chi^h_{[s,\pi]}$ there exist $h\in H_\pi$ and $s\in S$  such that  $\rho_\pi\times \sigma_\pi(a)\chi^h_{[s,\pi]}\neq 0$. 
Let $U_{[s,\pi]}\colon \ell^2(G_{\pi},H_{\pi})\to \ell^2(G_{\beta_s(\pi)},H_\pi)$ be the unitary from \cref{prop: covariant reps are unitarily equivalent}. Then 
$U_{[s,\pi]}\chi^h_{[s,\pi]}=\chi^h_{[ss^*,\beta_s(\pi)]}=\chi^h_{[1,\beta_s(\pi)]}$ using \Cref{lem: equivalence in omega}, and $U_{[s,\pi]}\rho_\pi\times \sigma_\pi(a)U^*_{[s,\pi]}=\rho_{\beta_s(\pi)}\times \sigma_{\beta_s(\pi)}(a)$. 
Now 
\begin{align*}
0&\neq \big(\rho_\pi\times \sigma_\pi(a^*a)\chi^h_{[s,\pi]}\mid \chi^h_{[s,\pi]}\big)\\
&=\big(\rho_{\beta_s(\pi)}\times \sigma_{\beta_s(\pi)}(a^*a)\chi^h_{[1,\beta_s(\pi)]}\mid \chi^h_{[1,\beta_s(\pi)]}\big)\\
&=\big(\rho_{\beta_s(\pi)}\times \sigma_{\beta_s(\pi)}(a^*a)\chi^h_{[1,\beta_s(\pi)]}([1,\beta_s(\pi)])\mid h\big)\\
&=\big(\beta_s(\pi)''(\ER(a^*a))h\mid h\big)
\end{align*}
by the above.
Thus $\beta_s(\pi)''(\ER(a^*a))\neq 0$, which is a contradiction since $\beta_s(\pi)\in S\cdot\pi$.  Hence $\rho_\pi\times \sigma_\pi(a)= 0$, as needed. 

   Next, suppose that $\rho_\pi\times \sigma_\pi(a)=0$.  Then for all $s\in S$ and $\beta_s(\pi)\in S\cdot\pi$ and $h\in H_{\beta_s(\pi)}$ we have 
  \begin{align*}
      \beta_s(\pi)''(\ER(a^*a))h&=\rho_{\beta_s(\pi)}\times \sigma_{\beta_s(\pi)}(a^*a)\chi^h_{[1,\beta_s(\pi)]}([1,\beta_s(\pi)])\\
      &=U_{[s,\pi]}\rho_\pi\times \sigma_\pi(a^*a)U^*_{[s,\pi]}\chi^h_{[1,\beta_s(\pi)]}([1,\beta_s(\pi)])=0.
  \end{align*}
   Thus $\eta''(\ER(a^*a))=0$ for all  $\eta\in S\cdot\pi$.
\end{proof}

\subsection{Reconciling the reduced crossed products}

\begin{thm}\label{thm reconciliation for reduced}
    Let $\alpha\colon S \curvearrowright A$ be an action of a unital inverse semigroup $S$ on a C*-algebra $A$. Then $\NN_\ER=\{a\in A\rtimes_\full S: \rho_\pi\times \sigma_\pi(a)=0\text{\ for all }\pi\in\hat A\}$ and  $\Red(A,S,\alpha)= A\rtimes_\red S$. 
\end{thm}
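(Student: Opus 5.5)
The plan is to derive the description of $\NN_\ER$ directly from \Cref{prop: covariant rep implements expectation}, and then obtain the identification of the C*-algebras as a formal consequence.

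\textbf{Step 1: the kernel description.} Fix $a\in A\rtimes_\full S$. By the second statement of \Cref{prop: covariant rep implements expectation}, $\rho_\pi\times\sigma_\pi(a)=0$ for all $\pi\in\hat A$ if and only if $\eta''(\ER(a^*a))=0$ for all $\eta\in\hat A$; here we use that every $\eta$ lies in its own orbit, since $\eta=\beta_1(\eta)$. It therefore remains to show that an element $x\in A''$ satisfies $x=0$ as soon as $\eta''(x)=0$ for every irreducible $\eta$. This is the only step with content.

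\textbf{The obstacle in Step 1.} Nonzero elements of $A''$ can be annihilated by all normal extensions of irreducible representations; for instance, atomic-part considerations fail in general. So I will not argue this for arbitrary $x\in A''$. Instead I will use the special form $x=\ER(a^*a)$ together with the structure of $\ER$. The plan is to work first with $a\in A\rtimes_\alg S$. There $\ER(a^*a)=\sum_s b_s1_s$ is a finite combination of central projections $1_{I_{s,1}}$ times elements of $A$. Lattice combinations of the finitely many central projections $1_{I_{s,1}}$ decompose this element as a finite sum $\sum_j c_j p_j$. Each $p_j$ is a product of projections of the form $1_J$ or $1-1_J$, and each $c_j$ is in $A$.

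If every irreducible $\eta''$ kills this element, then each $c_jp_j$ is killed by all irreducibles. This holds because $\eta''(p_j)\in\{0,1\}$ for irreducible $\eta$, by \Cref{lemma: writing pi with projections}. An element $c\,1_{J}(1-1_{K})$ is then killed in the universal representation. To see this, use that the universal representation is a direct sum of cyclic representations, each of which is disintegrated over irreducibles; alternatively, and more simply, use that every positive functional on $A$ restricted to the ideal pieces is a weak*-limit of combinations of pure states. I expect this limiting argument to be the delicate part.

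\textbf{Alternative route for Step 1.} If that argument becomes messy, I will fall back on a cleaner approach. By \cite{BussExelMeyer2017}, the norm of $A\rtimes_\red S$ is the supremum over representations $\pi$ of $A$ of $\|\Ind\pi''\|$. It suffices to take $\pi$ ranging over irreducibles, because $\ER$ is computed on the atomic part: the atomic representation $\bigoplus_{\eta\in\hat A}\eta''$ is faithful on the image of $\ER$, as $\ER$ maps into the part generated by $A$ and the projections $1_s$, on which the atomic representation is faithful. Once the inclusion $\{a:\rho_\pi\times\sigma_\pi(a)=0\ \forall\pi\}\subseteq\NN_\ER$ is known on $A\rtimes_\alg S$, extending it to the full crossed product requires care. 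For that I will use contractivity and complete positivity of $\ER$, together with the formula \Cref{eq pi'' of E}. That formula gives $\|\bigoplus_\eta\eta''(\ER(a^*a))\|\le\|a\|_\red^2$, so $a\mapsto\bigoplus_\eta\eta''\circ\ER$ is continuous for the reduced seminorm. The reverse inclusion, $\NN_\ER\subseteq\bigcap_\pi\ker$, follows immediately from \Cref{prop: covariant rep implements expectation}.

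\textbf{Step 2: the C*-algebras.} Once $\NN_\ER=\bigcap_{\pi}\ker(\rho_\pi\times\sigma_\pi)$ is established, the norm on $(A\rtimes_\full S)/\NN_\ER$ agrees with the norm of the faithful representation $\bigoplus_\pi\rho_\pi\times\sigma_\pi$ of that quotient. On the image of $A\rtimes_\alg S$ this norm is exactly $\|\cdot\|_\red$, and that image is dense. Hence $\Red(A,S,\alpha)\cong A\rtimes_\red S$.
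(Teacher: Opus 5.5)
Your Step~2 and the reduction at the start of Step~1 match the paper's proof. The paper closes Step~1 exactly where you locate the difficulty, by citing \cite[Theorem~7.4]{BussExelMeyer2017}: $\hat A$ is $\ER$-faithful, i.e.\ $\bigoplus_{\eta\in\hat A}\eta''$ is faithful on $\ER(A\rtimes_\full S)$. Your own attempt at this step has a gap in the passage from $A\rtimes_\alg S$ to $A\rtimes_\full S$.

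First, the inclusion you plan to prove on $A\rtimes_\alg S$ is vacuous. By \Cref{prop: reduced norm is a norm}, an element of $A\rtimes_\alg S$ killed by every $\rho_\pi\times\sigma_\pi$ is already $0$, so all the content lies at non-algebraic elements. Second, the continuity you invoke runs the wrong way. The bound $\|\bigoplus_\eta\eta''(\ER(x))\|\le\|x\|_\red$ controls $\bigoplus_\eta\eta''\circ\ER$, which you already know vanishes at $a^*a$. What is needed is the reverse inequality $\|\ER(x)\|\le\|\bigoplus_\eta\eta''(\ER(x))\|$ for $x\in A\rtimes_\alg S$. Then $\ER$ is $\|\cdot\|_\red$-bounded, and for $x_n\to a$ in $A\rtimes_\full S$ you get $\|\ER(a^*a)\|=\lim_n\|\ER(x_n^*x_n)\|\le\lim_n\|x_n^*x_n\|_\red=\|a^*a\|_\red=0$. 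Knowing that $\bigoplus_\eta\eta''$ kills no nonzero $\ER(x^*x)$ with $x$ algebraic does not give this, because faithfulness on a dense non-closed subspace does not pass to the closure. Your ``alternative route'' simply asserts the faithfulness that is at stake.

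Your lattice decomposition can be upgraded to supply the missing inequality. For finite $F\subseteq S$, let $p_1,\dots,p_m$ be the atoms of the Boolean algebra generated by the central projections $1_s$, $s\in F$, and set $B_F\coloneqq\sum_j Ap_j\subseteq A''$. This $B_F$ has three useful properties:
\begin{enumerate}
\item it is a C*-algebra, being a finite direct sum of the images of $A$ under the $*$-homomorphisms $c\mapsto cp_j$;
\item it contains $\ER(x)$ for every $x$ supported in $F$;
\item $B_F\subseteq B_{F'}$ whenever $F\subseteq F'$.
\end{enumerate}
Your argument shows that $\bigoplus_\eta\eta''$ is faithful on each $B_F$. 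A faithful $*$-homomorphism on a C*-algebra is isometric, so it is isometric on the directed union $\bigcup_F B_F$ and hence on its closure, which contains $\ER(A\rtimes_\full S)$. That is exactly the $\ER$-faithfulness you need.

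The step you call delicate needs no limiting argument. In fact, weak$^*$-approximation by pure states would not work, because normal extensions to $A''$ do not respect weak$^*$-limits of functionals on $A$. Write $p_j=1_J(1-1_K)$ for ideals $J,K$ of $A$, and suppose $\eta(c)=0$ for all $\eta\in\hat J\setminus\hat K$. For each $y\in J$, the element $cy\in J$ vanishes under every irreducible representation not in $\hat K$, so $cy\in K$. Hence $cJ\subseteq K$, so $c1_J$ lies in the weak closure $A''1_K$ of $K$, and therefore $c1_J(1-1_K)=0$.
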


\begin{proof} Temporarily set $\Ii_\red\coloneqq \{a\in A\rtimes_\full S: \rho_\pi\times \sigma_\pi(a)=0\text{\ for all }\pi\in\hat A\}$. 
Since $\Red(A,S,\alpha)$ is  the completion of $A\rtimes_\alg S$ in the norm $\|\cdot\|_\red$, it follows that $\Red(A,S,\alpha)$ is the quotient of $A\rtimes_\full S$ by the ideal $\Ii_\red$ of $A\rtimes_\full S$. Thus 
   to see that $\Red(A,S,\alpha)= A\rtimes_\red S$ it suffices to show that 
   $\NN_\ER=\Ii_\red$.

     Let $a\in \NN_\ER$. Then $\ER(a^*a)=0$ and so $\pi''(\ER(a^*a))=0$ for all $\pi\in\hat{A}$. Now $\rho_\pi\times\sigma_\pi(a)=0$ for all $\pi\in\hat{A}$  by  \cref{prop: covariant rep implements expectation}. Thus  $a\in \Ii_\red$. 

     For the other direction, let $a\in \Ii_\red$. By \cref{prop: covariant rep implements expectation} we have $\pi''(\ER(a^*a))=0$ for all $\pi\in \hat A$.
     Recall from \cite[Definition~4.9]{BussExelMeyer2017} that a family of representations $\{\pi_i\}$ of $A$ is ``$\ER$-faithful'' if the direct sum of the restrictions of $\pi_i''$ to the range $\ER(A\rtimes_\full S)\subseteq A''$ of $\ER$ is faithful. Since  $\hat A$ is $\ER$-faithful by \cite[Theorem~7.4]{BussExelMeyer2017} we have $\ER(a^*a)=0$, giving $a\in\NN_\ER$. Now $\Ii_\red=\NN_\ER$. We conclude that $\Red(A,S,\alpha)= A\rtimes_\red S$.   
\end{proof}

Let $\alpha\colon S \curvearrowright A$ be an action of a  unital inverse semigroup $S$ on a C*-algebra $A$. Then $\alpha$ extends to an action  $\alpha''\colon S \curvearrowright A''$.
On page~243 of  \cite{BussExelMeyer2017}, it is also observed that the norm of $A\rtimes_\red S$ is the supremum of  norms in $\Ind\pi''$, where $\pi''$ is the extension to $A''$ of a representation $\pi$ of $A$. The induction is via  a right-Hilbert $(A''\rtimes_\red S)$-$A''$ bimodule $\ell^2(S, A'')$.  This, and that it suffices to take irreducible representations of $A$, is proved over several sections, culminating in \cite[Proposition~4.10 and Theorem~7.2]{BussExelMeyer2017}. In \Cref{prop reconcile induced reps} below we show that if $\pi\in\hat A$, then $(\Ind\pi'')|_{A\rtimes_\red S}$ is unitarily equivalent to the representation $\rho_\pi\times\sigma_\pi$  defined in \Cref{thm: rep induced from hat A} which factors through the reduced crossed product by \Cref{prop: reduced norm is a norm}.

 Let $\ER\colon A''\rtimes_\alg S\to A''$ be  as in  \Cref{thm ER and EL}.  Then $A''$ acts on the right of $A''\rtimes_\alg S$ by multiplication (after identifying $A''$ with $A''\delta_1$), and $\langle x\,,\, y\rangle=\ER(x^*y)$ is an $A''$-valued inner product on $A''\rtimes_\alg S$ by \cite[Proposition~3.6]{BussExelMeyer2017}. Then $\ell^2(S, A'')$ is by definition the completion of $A''\rtimes_\alg S$  in the norm induced by the inner product.  Furthermore, the left action of $A\rtimes_\alg S$ on $A''\rtimes_\alg S$  by multiplication extends, by definition of $A\rtimes_\red S$ in \cite[Definition~4.1]{BussExelMeyer2017}, to an action of $A\rtimes_\red S$ by adjointable operators on $\ell^2(S, A'')$.

Let $\pi\in\hat A$, and let $(\rho_\pi,\sigma_\pi)$ be the covariant representation of $\alpha\colon S \curvearrowright A$ on $\ell^2(G_\pi,H_\pi)$ of \Cref{thm: rep induced from hat A}.
To simplify matters, we will now view $(\rho_\pi,\sigma_\pi)$ on $\ell^2(G_\pi)\otimes H_\pi$  so that with  $1_{[t,\pi]}$   the point mass at $[t,\pi]\in G_\pi$ we have
\begin{align*}
\rho_\pi(a)(1_{[t,\pi]}\otimes h)&=1_{[t,\pi]}\otimes \beta_t(\pi)(a)h;\\
\sigma_\pi(s)(1_{[t,\pi]}\otimes h)&=\begin{cases}
1_{[st,\pi]}\otimes h&\text{if $\pi \in \hat{A}_{(st)^*st}$}\\0&\text{else.}
\end{cases}
\end{align*}

\begin{prop}\label{prop reconcile induced reps}
Let $\pi \in \hat{A}$ and view $(\rho_\pi,\sigma_\pi)$ on $\ell^2(G_\pi)\otimes H_\pi$, and let $\ell^2(S,A'')\otimes_{\pi''}H_\pi$ be the Hilbert space of the induced representation $\Ind\pi''$.  There is a unitary \[U\colon \ell^2(S,A'')\otimes_{\pi''} H_\pi\to \ell^2(G_\pi)\otimes H_\pi\] such that
$U(a_s \delta_s\otimes h)=1_{[s,\pi]}\otimes \pi''(\alpha_s''(a_s))h$ for $s\in S$, $a_s\in A_{ss^*}''$ and $h\in H_\pi$. Furthermore,  $U$ implements a unitary equivalence between $(\Ind \pi'')|_{A\rtimes_\red S}$ and $\rho_\pi\times\sigma_\pi$.
\end{prop}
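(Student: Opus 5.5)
We define $U$ on elementary tensors by the stated formula and show it preserves inner products. Then we check surjectivity and the intertwining relations.

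\textbf{Isometry.} By definition of the balanced tensor product,
\[
(a_s\delta_s\otimes h\mid b_t\delta_t\otimes k)=\big(\pi''(\ER((a_s\delta_s)^*b_t\delta_t))h\mid k\big).
\]
Here $(a_s\delta_s)^*(b_t\delta_t)=\alpha''_{s^*}(a_s^*b_t)\delta_{s^*t}$. So \Cref{thm ER and EL} gives
\[
\ER\big((a_s\delta_s)^*b_t\delta_t\big)=\alpha''_{s^*}(a_s^*b_t)\,1_{s^*t},
\]
where $1_{s^*t}$ is the unit of $I_{s^*t,1}''$. On the other side,
\[
\big(1_{[s,\pi]}\otimes\pi''(\alpha''_s(a_s))h\mid 1_{[t,\pi]}\otimes \pi''(\alpha''_t(b_t))k\big)
\]
is nonzero only if $[s,\pi]=[t,\pi]$. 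By \Cref{lem: equivalence in omega} this happens exactly when $\pi\in\hat I_{s,t}$, in which case $\alpha_s''=\alpha_t''$ on $I_{s,t}''$. The plan is to show these two quantities agree in three steps.
\begin{enumerate}
\item Show $\pi\in\hat I_{s,t}$ if and only if $\pi''(1_{s^*t})\ne0$, using $I_{s^*t,1}=\alpha_{s^*}(I_{s,t}\cap A_{ss^*})$ from the $I_{s,t}$ calculus of \cite{ACaHMT}.
\item Use $\pi''(\alpha''_{s^*}(x))=\beta_{s}(\pi)''(x)$-type identities, i.e.\ the normal extension of the relation $\beta_s(\pi)=\overline{\pi\circ\alpha_{s^*}}$.
\item Move $\alpha''_{s^*}$ across the inner product.
\end{enumerate}
One must be careful with the convention $\pi\in\hat A_{s^*s}$ versus the $\alpha''_s$ in the formula. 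If necessary I will first check the case $t=s$ and then reduce the general case to it. The germ relation is local on $\hat A_{s^*s}$, so one can use \Cref{lemma: writing pi with projections} with $I=I_{s,t}$ to insert the central projection $1_{I_{s,t}}$ and its image under $\alpha''$. Once the inner products match, $U$ is well defined on the algebraic span and extends to an isometry. This is because vectors of norm zero in the balanced tensor product go to zero, and elements of $A''\rtimes_\alg S$ are finite sums of $a_s\delta_s$.

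\textbf{Surjectivity.} Vectors $1_{[s,\pi]}\otimes k$ with $[s,\pi]\in G_\pi$ span a dense subspace. Take $a_s\in A_{ss^*}$ with $\pi''(\alpha_s(a_s))=\beta$-translate acting nondegenerately. Since $\pi|_{A_{s^*s}}$ is irreducible and nonzero, the vectors $\pi(\alpha_s(a_s))h$ are dense in $H_\pi$ (in fact equal to $H_\pi$ by Kadison transitivity). Hence the range is dense and $U$ is unitary.

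\textbf{Intertwining.} It suffices to check on generators $b_r\delta_r\in A\rtimes_\alg S$ acting on elementary tensors. The left product is
\[
(b_r\delta_r)(a_s\delta_s)=\alpha''_r(\alpha''_{r^*}(b_r)a_s)\delta_{rs}.
\]
Applying $U$ gives
\[
1_{[rs,\pi]}\otimes\pi''\big(\alpha''_{rs}(\alpha''_{r^*}(b_r)a_s)\big)h .
\]
On the other side, we compute $\rho_\pi(b_r)\sigma_\pi(r)$ applied to $1_{[s,\pi]}\otimes\pi''(\alpha_s''(a_s))h$ using the tensor formulas above. To match the two, expand $\alpha''$ of a product as a product of $\alpha''$'s, and use multiplicativity and normality of $\pi''$. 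When $\pi\notin\hat A_{(rs)^*rs}$, both sides vanish, since the relevant element lies in an ideal killed by $\pi''$.

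\textbf{Main obstacle.} The main obstacle is the inner-product identity, specifically getting the twisting by $\alpha''$ and the irreducible-representation conventions (which of $\beta_s(\pi)$ versus $\pi$ appears) exactly right. If the formula as literally stated produces a mismatch, I would reinterpret $\pi''\circ\alpha''_s$ as $\beta_{s^*}(\pi)''$ restricted appropriately, and verify the identity first for $a_s$ in the ideal $A_{ss^*}$, before passing to $A''_{ss^*}$ by normality.
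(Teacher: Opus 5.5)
Your outline (define $U$ on elementary tensors, match inner products, show dense range, intertwine on generators) is the paper's. There is, however, a genuine gap at its centre: the formula you commit to for $U$ is wrong, and the repair you propose points the wrong way.

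\textbf{The formula for $U$.} Since $a_s\in A''_{ss^*}$ while $\alpha''_s$ is defined on $A''_{s^*s}$, the expression $\pi''(\alpha''_s(a_s))$ in the statement should read $\pi''(\alpha''_{s^*}(a_s))$. The balancing in $\ell^2(S,A'')\otimes_{\pi''}H_\pi$ forces this. With $u_s$ the unit of $A''_{ss^*}$ one has $a_s\delta_s=u_s\delta_s\cdot\alpha''_{s^*}(a_s)$, so $a_s\delta_s\otimes h=u_s\delta_s\otimes\pi''(\alpha''_{s^*}(a_s))h$. Hence $U$ must send $a_s\delta_s\otimes h$ to $1_{[s,\pi]}\otimes\pi''(\alpha''_{s^*}(a_s))h=1_{[s,\pi]}\otimes\beta_s(\pi)''(a_s)h$. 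You even write the relevant identity in your step 2, but never use it to fix $U$.

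Your fallback, reading $\pi''\circ\alpha''_s$ as $\beta_{s^*}(\pi)''$, is not isometric. Take $\mathbb{Z}$ acting on $C_0(\mathbb{Z})$ by $\alpha_n(f)(x)=f(x-n)$ and $\pi=\mathrm{ev}_0$. Then $\|a\delta_n\otimes h\|^2=\pi(\alpha_{-n}(|a|^2))\|h\|^2=|a(n)|^2\|h\|^2$, whereas $\|1_{[n,\pi]}\otimes\beta_{-n}(\pi)(a)h\|=|a(-n)|\,\|h\|$. So the isometry step cannot succeed as planned.

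The intertwining computation inherits the same defect, and it also contains a slip. Applying $U$ to $\alpha''_r(\alpha''_{r^*}(b_r)a_s)\delta_{rs}\otimes h$ should give $1_{[rs,\pi]}\otimes\pi''(\alpha''_{(rs)^*}\alpha''_r(\alpha''_{r^*}(b_r)a_s))h=1_{[rs,\pi]}\otimes\pi''(\alpha''_{s^*}(\alpha''_{r^*}(b_r)a_s))h$. This is indeed what $\rho_\pi(b_r)\sigma_\pi(r)$ produces, via $\beta_{rs}(\pi)=\overline{\pi\circ\alpha_{s^*r^*}}$. A minor point: for an inner product linear in the first variable, the balanced inner product of $a_s\delta_s\otimes h$ with $b_t\delta_t\otimes k$ is $(\pi''(\ER((b_t\delta_t)^*a_s\delta_s))h\mid k)$, not the version with $(a_s\delta_s)^*b_t\delta_t$.

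\textbf{The isometry step.} It is only sketched, and step 1 rests on a false identity. For $t=s$, the claim $I_{s^*t,1}=\alpha_{s^*}(I_{s,t}\cap A_{ss^*})$ would say $A_{s^*s}=\alpha_{s^*}(A_{s^*s}\cap A_{ss^*})$. This fails whenever $A_{ss^*}\not\subseteq A_{s^*s}$, because $\alpha_{s^*}\colon A_{ss^*}\to A_{s^*s}$ is bijective. What is true, and what you need, is $I_{s,t}=I_{t^*s,1}=I_{s^*t,1}$:
\begin{enumerate}
\item if $v\le s,t$, then $v^*v\le t^*s,1$;
\item if $w\le t^*s,1$, then $sw=tw\le s,t$ with $(sw)^*(sw)=w$.
\end{enumerate}
With this identity and the corrected formula, your direct route on general $a_s,b_t$ can be completed. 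For $\pi\in\hat I_{s,t}$, use \Cref{lemma: writing pi with projections} to insert $1_{I_{s,t}}$. Then use that $\alpha''_{s^*}=\alpha''_{t^*}$ on $\alpha_s(I_{s,t})''$ to get $\pi''(\alpha''_{t^*}(b_t)^*\alpha''_{s^*}(a_s))=\pi''(\alpha''_{t^*}(b_t^*a_s)1_{t^*s})$.

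\textbf{How the paper avoids this.} By the balancing relation, the vectors $u_s\delta_s\otimes h$ with $\pi\in\hat A_{s^*s}$ span a dense subspace, and $U$ maps them to $1_{[s,\pi]}\otimes h$. Since $\ER((u_t\delta_t)^*u_s\delta_s)=1_{t^*s}$, the isometry reduces to $[s,\pi]=[t,\pi]\iff\pi\in\hat I_{t^*s,1}$. Surjectivity is then immediate, with no transitivity argument needed, and the intertwining only has to be checked on the vectors $u_t\delta_t\otimes h$.
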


\begin{proof}
Let $\pi\in \hat A$, say $\pi\colon A\to B(H_\pi)$. 
Note that  $\rho_\pi\times\sigma_\pi$    factors through the reduced crossed product by \Cref{prop: reduced norm is a norm}.
For $s\in S$ we have $\pi\in \hat A_{s^*s}$ if and only if $\pi''\in (A_{s^*s}'')^\wedge$.  Thus we identify the discrete fibres $G_\pi=\{[t,\pi]:t\in S \text{ and }\pi\in \hat A_{t^*t}\}$ and $G_{\pi''}$.

For finite subsets $F\subseteq S$, the map $\big(\sum_{s\in F} a_s\delta_s, h\big)\mapsto \sum_{s\in F}1_{[s,\pi]}\otimes \pi''(\alpha_s''(a_s))h$ from $(A''\rtimes_\alg S)\times H_\pi\to \ell^2(G_\pi)\otimes H_\pi$ is bilinear, and hence there is a well-defined function $U$ from the algebraic tensor product  $A''\rtimes_\alg S\odot H_\pi$ to $\ell^2(G_\pi)\otimes H_\pi$ such that $U(a_s\delta_s\odot h)=1_{[s,\pi]}\otimes \pi''(\alpha_s''(a_s))h$.
To see that $U$ descends to a well-defined map on $A''\rtimes_\alg S\otimes_{\pi''} H_\pi$ it suffices to check that $U$ is isometric for the balanced inner product.  

Let $s\in S$ and write $u_s$ for the identity in $A_{ss^*}''$.   To make our calculations easier, we first observe that 
\begin{equation}\label{eq dense subspace of induced} \vecspan\{u_s\delta_s\otimes_{\pi''} h : h\in H_\pi, s\in S \text{ such that } \pi \in \hat{A}_{ss^*}\}\end{equation} 
is  dense in  $\ell^2(S,A'')\otimes_{\pi''} H_\pi$.
To see this, note that 
\begin{align*}
u_s\delta_s\cdot \alpha''_{s^*}(a_s)
&=1_s\delta_s\alpha''_{s^*}(a_s)\delta_1
= \alpha''_s(\alpha''_{s^*}(u_s)\alpha''_{s^*}(a_s))\delta_{1s}
=\alpha''_s\circ\alpha''_{s^*}(u_sa_s)\delta_s=a_s\delta_s
\\\intertext{and that}
a_s\delta_s\otimes_{\pi''} h&= u_s\delta_s\cdot \alpha_{s^*}''(a_s)\otimes_{\pi''} h=u_s\delta_s \otimes_{\pi''} \pi''(\alpha''_{s^*}(a_s))h.
\end{align*}
Next, suppose that $s\in S$ such that  $\pi\not \in \hat{A}_{s^*s}$. Then $\pi|_{A_{s^*s}}=0$ and so $\pi''|_{A_{s^*s}''}=0$, giving $u_s\delta_s \otimes \pi''(\alpha_{s^*}(a_s))h
=0$. 
Since $\vecspan\{a_s\delta_s\otimes_{\pi''} h\colon h\in H_\pi, s \in S, a_s \in A_{ss^*}''\}$ is dense
in 
$\ell^2(S,A'')\otimes_{\pi''} H_\pi$, so is the subspace at \Cref{eq dense subspace of induced}.

Now let  $s,t \in S$ such that $\pi\in\hat A_{ss^*}\cap \hat A_{tt^*}$, and let $h,k \in H_\pi$. Then 
\begin{align*}
\big(U(u_s\delta_s\odot
h)\mid U(u_t\delta_t\odot
k)\big)
&=(1_{[s,\pi]}\otimes h\mid1_{[t,\pi]}\otimes k)\\
&=(1_{[s,\pi]}\mid 1_{[t,\pi]})(h\mid k)\\
&=\begin{cases}
(h\mid k)&\text{if $[s,\pi]=[t,\pi]$}\\
0&\text{otherwise.}
\end{cases}
\end{align*}
We have $(u_t\delta_t)^*u_s\delta_s=\alpha_{t^*}(u_tu_s)\delta_{t^*s}$ and $\ER(\alpha_{t^*}(u_tu_s)\delta_{t^*s})=\alpha_{xxt^*}(u_tu_s)1_{t^*s}$, where $1_{t^*s}$ is the identity in $(I_{t^*s, 1})''$.
Then $\alpha_{t^*}''(u_tu_s)$ is the identity in $A_{t^*ss^*t}$,  and if $\pi \in (I_{t^*s, 1})^\wedge$ then $\pi''(1_{t^*s})$ is the identity operator on $H_\pi$. Thus
\begin{align*}
(u_s\delta_s\odot
h\mid u_t\delta_t\odot  
k)
&=\big(\pi''(\langle u_t\delta_t\mid u_s\delta_s\rangle_{A''})h\mid k\big)\\
&=\big(\pi''(\ER((u_t\delta_t)^*u_s\delta_s))h\mid k\big)\\
&=\big(\pi''(\ER(\alpha_{t^*}''(u_tu_s)\delta_{t^*s}))h\mid k\big)\\
&=\big(\pi''(\alpha_{t^*}''(u_tu_s)1_{t^*s})h\mid k\big)\\
&=\big(\pi''(1_{t^*s})h\mid k\big)\\
&=\begin{cases}
(h\mid k) &\text{if $\pi \in  (I_{t^*s, 1})^\wedge$}\\
0 &\text{otherwise.}
\end{cases}
\end{align*}
By  \Cref{lem: equivalence in omega} we have 
\[
[s,\pi]=[t,\pi]\Longleftrightarrow [t^*s,\pi]=[1,\pi]\Longleftrightarrow\pi\in (I_{t^*s,1})^\wedge, 
\]
and we conclude that $\big(U(u_s\delta_s\odot h)\mid U(u_t\delta_t\odot k)\big)=(u_s\delta_s\odot
h\mid u_t\delta_t\odot k)$.
Thus $U$ descends to an isometry of $A''\rtimes_\alg S\otimes_{\pi''} H_\pi$ to $\ell^2(G_\pi)\otimes H_\pi$, and then extends to an isometry of $\ell^2(S,A'')\otimes_{\pi''} H_\pi$ to $\ell^2(G_\pi)\otimes H_\pi$. Finally notice that  $1_{[s,\pi]}\otimes h=U(u_s\delta_s\otimes_{\pi''}h)$ so that $U$ is surjective, hence is a unitary.

To show that $U$ intertwines $(\Ind \pi)''|_{A\rtimes_\red S}$ and $\rho_\pi\times\sigma_\pi$ we start with a side calculation involving the action of  $A\rtimes_\red S$ on $\ell^2(S,A'')\otimes_{\pi''} H_\pi$.  Let $s\in S$ and $a_s\in A_{ss^*}$. We have
\begin{align}\label{eq side calculation for intertwine}
  a_s\delta_s u_t\delta_t
  &=
  \alpha_{s}''\big(\alpha_{s^*}(a_s)u_t\big)\delta_{st}\notag\\
  &=\alpha_{st}''\big( \alpha_{(st)^*}''(u_{st})\alpha_{(st)^*}''\big( \alpha_s''(\alpha_{s^*}(a_s)u_t) \big)\big)\delta_{st}\notag\\
 &= u_{st}\alpha_s''(\alpha_{s^*}(a_s)u_t)\delta_{st}
\notag\\
&=
u_{st}\delta_{st}\alpha_{(st)^*}''\big(\alpha_s''\big(\alpha_{s^*}(a_s)u_t\big)\big)\delta_1\notag\\
&=u_{st}\delta_{st}\alpha_{t^*}''\big(\alpha_{s^*}(a_s)u_t\big)\delta_1
\end{align}
Then using \eqref{eq side calculation for intertwine} at the second step we have
\begin{align*}
\big(U\circ \Ind \pi''(a_s\delta_s)\big)(u_t\delta_t\otimes_{\pi''} h) &=U((a_s\delta_su_t\delta_t)\otimes_{\pi''} h)\\
&=U(u_{st}\delta_{st}\otimes_{\pi''} \pi''(\alpha_{t^*}''(\alpha_{s^*}(a_s)u_t))h)\\
&=\begin{cases}
1_{[st,\pi]}\otimes\pi''(\alpha_{t^*}''(\alpha_{s^*}(a_s)u_t))h &\text{if $\pi \in \hat A_{(st)^*st}$ }\\
0&\text{otherwise.}
\end{cases}
\end{align*}
On the other hand,  
\begin{align*}
\big(\rho_\pi\times\sigma_\pi(a_s\delta_s)\circ U \big)(u_t\delta_t\otimes_{\pi''} h)
&=\rho_\pi\times\sigma_\pi(a_s\delta_s)(1_{[t,\pi]}\otimes h)\\
&=\rho_\pi(a_s)\sigma_\pi(s)(1_{[t,\pi]}\otimes h)\\
&=\begin{cases}
\rho_\pi(a_s)(1_{[st,\pi]}\otimes h)&\text{if $\pi \in \hat{A}_{(st)^*st}$}\\
0 &\text{otherwise.}
\end{cases}
\end{align*}
Let $\pi \in \hat{A}_{(st)^*st}$. Then $\rho_\pi(a_s)(1_{[st,\pi]}\otimes h)=1_{[st,\pi]}\otimes \beta_{st}(\pi)(a_s)h$ and 
\[
  \beta_{st}(\pi)(a_s)=\overline{\pi\circ\alpha_{t^*}}(\alpha_{s^*}(a_s))=\overline{\pi\circ\alpha_{t^*}}''(\alpha_{s^*}(a_s)u_t)=\pi''\big(\alpha_{t^*}''(\alpha_{s^*}(a_s)u_t)).
\]
Thus 
\begin{align*}
\big(\rho_\pi\times\sigma_\pi(a_s\delta_s)\circ U \big)(u_t\delta_t\otimes_{\pi''} h)
&=\begin{cases}
(1_{[st,\pi]}\otimes \pi''(\alpha_{t^*}''(\alpha_{s^*}(a_s)u_t))h)&\text{if $\pi \in \hat{A}_{(st)^*st}$}\\
0 &\text{otherwise}
 \end{cases}\\
 &=\big(U\circ \Ind \pi''(a_s\delta_s)\big)(u_t\delta_t\otimes_{\pi''} h) 
\end{align*}
It follows that $(\Ind \pi'')|_{A\rtimes_\red S}$ and $\rho_\pi\times\sigma_\pi$ are unitarily equivalent representations of $A\rtimes_\red S$.
\end{proof}

\subsection{Reconciling the essential crossed products}
An inverse semigroup  $S$ is called \emph{quasi-countable}, as in \cite{CMS},    if there exists a countable set $K\subseteq S$ such that $S=KE(S)$. 
We now show that our $\Ess(A,S,\alpha)$ coincides with $A\rtimes_\ess S$ if $S$ is \emph{quasi-countable}. 

For a subset $U\subseteq \hat{A}$ we write $ S\cdot U\coloneq\cup_{\pi\in U} S\cdot\pi$ for the \emph{saturation} of $U$ under the action of $S$.

\begin{lemma}\label{lem: orbits are meagre}
     Let $\alpha\colon S \curvearrowright A$ be an action of a quasi-countable and unital inverse semigroup $S$ on a C*-algebra $A$. 
    \begin{enumerate}
        \item\label{item1: orbits are meagre} If $R\subseteq \hat{A}$ is  meagre, then its saturation $ S\cdot R$ is meagre.
        \item\label{item2: orbits are meagre}  If $R\subseteq \hat{A}$ is  comeagre, then there exists a comeagre  $V\subseteq R$ such that $V$ is invariant under the action of $S$.
    \end{enumerate}
\end{lemma}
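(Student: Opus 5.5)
Using quasi-countability, fix a countable $K\subseteq S$ with $S=KE(S)$. The plan is first to show that the saturation is already achieved by the countably many partial homeomorphisms $\beta_k$, $k\in K$, and then to use that homeomorphisms between open sets preserve nowhere density.

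For the first reduction, let $\pi\in\hat A$ and $s\in S$ with $\pi\in\hat A_{s^*s}$. Write $s=ke$ with $k\in K$ and $e\in E(S)$. Then $\beta_s=\beta_k\circ\beta_e$, and $\beta_e$ is the identity on its domain $\hat A_e$. Since $\pi\in\hat A_{s^*s}=\hat A_{ek^*ke}$, it lies in $\hat A_e\cap\hat A_{k^*k}$, and so $\beta_s(\pi)=\beta_k(\pi)$. Hence
\[
S\cdot R=\bigcup_{k\in K}\beta_k\big(R\cap\hat A_{k^*k}\big).
\]

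For \eqref{item1: orbits are meagre}, it therefore suffices to show that $\beta_k(N\cap\hat A_{k^*k})$ is nowhere dense in $\hat A$ whenever $N$ is nowhere dense. A countable union of countable unions of nowhere dense sets is then meagre. I would argue as follows.
\begin{enumerate}
\item The set $N\cap\hat A_{k^*k}$ is nowhere dense in the open subspace $\hat A_{k^*k}$. If an open $W\subseteq\hat A_{k^*k}$ lies in its relative closure, then $W\subseteq\overline N$, which is impossible unless $W=\emptyset$.
\item The homeomorphism $\beta_k\colon\hat A_{k^*k}\to\hat A_{kk^*}$ carries relatively nowhere dense sets to relatively nowhere dense sets.
\item A set $M$ that is nowhere dense in an open subspace $O$ is nowhere dense in $\hat A$. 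Suppose a nonempty open $W$ lies in $\overline M$. Then $W\subseteq\overline O$, so $W\cap O\neq\emptyset$. The set $W\cap O$ is open, and it lies in $\overline M\cap O$, which is the closure of $M$ relative to $O$. This contradicts step (1) applied to $M$ in $O$.
\end{enumerate}
These are routine general-topology facts and need no Hausdorffness, which matters since $\hat A$ need not be Hausdorff.

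For \eqref{item2: orbits are meagre}, set $M=\hat A\setminus R$, which is meagre, and let $V=\hat A\setminus S\cdot M$. By part \eqref{item1: orbits are meagre}, $S\cdot M$ is meagre, so $V$ is comeagre. Since $M\subseteq S\cdot M$ (take $s=1$), we get $V\subseteq R$.

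It remains to check that $V$ is invariant. Take $\pi\in V$ and $s\in S$ with $\pi\in\hat A_{s^*s}$, and suppose $\beta_s(\pi)\in S\cdot M$, say $\beta_s(\pi)=\beta_t(\mu)$ with $\mu\in M$. Then $\pi=\beta_{s^*}(\beta_t(\mu))=\beta_{s^*t}(\mu)\in S\cdot M$, using that $\beta$ is a homomorphism into $\II(\hat A)$. This is a contradiction, so $\beta_s(\pi)\in V$.

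The only delicate step is the reduction to countably many $\beta_k$, specifically checking that the domains match so that $\beta_s(\pi)=\beta_k(\pi)$. This uses $\hat A_{ek^*ke}=\hat A_e\cap\hat A_{k^*k}$, which holds as in the proof of \Cref{prop: induced action on spectrum}.
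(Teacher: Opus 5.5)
Your proposal is correct and follows the same route as the paper. You reduce the saturation to the countably many homeomorphisms $\beta_k$ ($k\in K$), push nowhere dense sets forward, and take $V=\hat A\setminus S\cdot(\hat A\setminus R)$. You also verify $S\cdot R=\bigcup_{k\in K}\beta_k(R\cap\hat A_{k^*k})$, the invariance of $V$, and the passage from relative to global nowhere density; the paper only asserts these or cites them. One trivial slip: in your step (3) the contradiction is with the nowhere density of $M$ in $O$, which steps (1)--(2) supply, not with step (1) itself.
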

\begin{proof}
    (\Cref{item1: orbits are meagre}) Fix a meagre $R\subseteq \hat{A}$. Write $R\subseteq \cup_{n\in \N} W_n$ where each $W_n$ is nowhere dense in $\hat{A}$.
     Since $S$ is quasi-countable, there is a countable subset $K$ such that $S=KE(S)$. Now  \[ S\cdot R=\bigcup_{t\in K}\beta_t(R\cap \hat{A}_{t^*t})\subseteq \bigcup_{t\in K}\bigcup_{n\in \N}\beta_t(W_n\cap \hat{A}_{t^*t})\]
     is a countable union. 
     Since each $\beta_t$ is a homeomorphism on $\hat{A}_{t^*t}$, each $\beta_t(W_n\cap \hat{A}_{t^*t})$ is nowhere dense in the relative topology in $\hat{A}_{tt^*}$. Then  $\beta_t(W_n\cap \hat{A}_{t^*t})$ is also nowhere dense in $\hat{A}$ by \cite[Theorem 11.5.4]{NariciBeckenstein2011}. Thus $ S\cdot R$  is meagre.

  (\Cref{item2: orbits are meagre}) Suppose that $R\subseteq\hat A$ is comeagre. Then $\hat{A}\setminus R$ is meagre and by (\Cref{item1: orbits are meagre}) its saturation is also meagre. Thus $V\coloneq\hat{A}\setminus S\cdot(\hat{A}\setminus R)$ is comeagre and contained in $R$; as the complement of a saturation  it is invariant.
\end{proof}

\begin{thm}\label{thm reconcile ess}
    Let $\alpha\colon S \curvearrowright A$ be an action of a 
    unital inverse semigroup $S$ on a C*-algebra $A$. 
    Let $\|\cdot\|_\ess$ be the seminorm of \Cref{eq ess norm} extended to $A\rtimes_\full S$. Then for $a\in A\rtimes_\full S$ we have 
\begin{equation}\label{eqn: ess min formula}
        \|a\|_\ess=\min\Big\{\sup_{\pi\in R}\|\rho_\pi\times \sigma_\pi(a)\|\colon R\subseteq \hat{A} \text{ is comeagre}\Big\}.
    \end{equation}
Furthermore, 
    $\{a\in A\rtimes_\full S: \|a\|_\ess=0\}\subseteq \NN_\EL$, with equality if $S$ is quasi-countable. If $S$ is quasi-countable, then    $\Ess(A,S,\alpha)=A\rtimes_\ess S$.
\end{thm}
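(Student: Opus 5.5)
The plan has three parts: prove the minimum formula \eqref{eqn: ess min formula}, translate ``$\|a\|_\ess=0$'' into a statement about $\ER(a^*a)$ via \Cref{prop: covariant rep implements expectation}, and compare that with $\NN_\EL$ through the relationship between $\EL$ and $\ER$.

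\emph{The minimum formula.} First, for each $\pi$ the map $a\mapsto\|\rho_\pi\times\sigma_\pi(a)\|$ is a $\|\cdot\|_\full$-contractive seminorm. Hence $\|\cdot\|_\ess$ extends continuously from $A\rtimes_\alg S$ to $A\rtimes_\full S$ by the same formula \eqref{eq ess norm}, and it is dominated by $\|\cdot\|_\red\le\|\cdot\|_\full$. To see that the infimum is attained, I will choose comeagre sets $R_n\subseteq\hat A$ with
\[\sup_{\pi\in R_n}\|\rho_\pi\times\sigma_\pi(a)\|<\|a\|_\ess+1/n.\]
Then $R\coloneqq\bigcap_n R_n$ is comeagre, because a countable union of meagre sets is meagre. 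The supremum over $R$ is at most each of these bounds, so it equals $\|a\|_\ess$.

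\emph{The inclusion $\{\|a\|_\ess=0\}\subseteq\NN_\EL$.} By the minimum formula, $\|a\|_\ess=0$ gives a comeagre $R$ with $\rho_\pi\times\sigma_\pi(a)=0$ for all $\pi\in R$. By \Cref{prop: covariant rep implements expectation}, $\pi''(\ER(a^*a))=0$ for all $\pi\in R$. So the ``support''
\[M_a\coloneqq\{\pi\in\hat A:\pi''(\ER(a^*a))\neq0\}\]
is contained in the meagre set $\hat A\setminus R$. The bridge I will use is the Kwa\'sniewski--Meyer relationship between the two expectations: $\EL$ is $\ER$ followed by the canonical map from (the relevant part of) $A''$ to $\Mloc(A)$. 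The kernel of that map on the range of $\ER$ consists of the elements whose support in $\hat A$ is meagre. Granting this, the inclusion gives $\EL(a^*a)=0$.

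\emph{Equality for quasi-countable $S$, and the main obstacle.} Conversely, suppose $\EL(a^*a)=0$, so that $M_a$ is meagre by the same characterization. By \Cref{lem: orbits are meagre}, the saturation $S\cdot M_a$ is meagre, so $V\coloneqq\hat A\setminus S\cdot M_a$ is an $S$-invariant comeagre set. For $\pi\in V$ the whole orbit $S\cdot\pi$ misses $M_a$, so $\eta''(\ER(a^*a))=0$ for every $\eta\in S\cdot\pi$. The second part of \Cref{prop: covariant rep implements expectation} then gives $\rho_\pi\times\sigma_\pi(a)=0$, and hence $\|a\|_\ess=0$. It follows that both $\Ess(A,S,\alpha)$ and $A\rtimes_\ess S$ are the quotient of $A\rtimes_\full S$ by the same ideal, so they are equal.

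The step I expect to be the main obstacle is justifying the meagre-support criterion: $\EL(b)=0$ if and only if $\{\pi:\pi''(\ER(b))\ne0\}$ is meagre, for positive $b$.
\begin{itemize}
\item For one direction, I would unwind \Cref{thm ER and EL} on elements $a_s\delta_s$. On the essential ideal $I_{s,1}\oplus I_{s,1}^\perp$, both $1_s$ and $i_s$ act as the projection onto $I_{s,1}$. Their difference is therefore supported on the boundary of the open set $\hat I_{s,1}$, which is closed and nowhere dense. For a finite sum, the union of these boundaries is still nowhere dense, and a continuity argument should pass from finite sums to general $b$.
\item For the other direction, vanishing of a local multiplier off a meagre set should force vanishing on a dense open set, hence on an essential ideal, hence in $\Mloc(A)$.
\end{itemize}
If this cannot be done cleanly, I would cite the corresponding statement in \cite{KwasniewskiMeyer2021} relating $\EL$ to $\ER$ via meagre sets in $\hat A$.
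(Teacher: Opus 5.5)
Your proposal is correct and follows the paper's proof essentially step for step: the same intersection-of-comeagre-sets argument for the minimum, the same use of \Cref{prop: covariant rep implements expectation} for the inclusion, and the same invariant comeagre set from \Cref{lem: orbits are meagre} for the reverse inclusion when $S$ is quasi-countable. The meagre-support criterion you flag as the main obstacle is exactly \cite[Corollary~4.12]{KwasniewskiMeyer2021}, namely $\NN_\EL=\{a:\{\pi\in\hat A:\pi''(\ER(a^*a))=0\}\text{ is comeagre}\}$. The paper simply cites this result, so your fallback of citing it is what the paper does, and the sketched direct argument is not needed.
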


\begin{proof}
We begin by showing that the infimum in the definition of $\|a\|_\ess$ is  a minimum, giving \Cref{eqn: ess min formula}.
For every $n\in \N$ there exists a comeagre subset $R_n\subseteq \hat{A}$ such that \[\|a\|_\ess\le \sup_{\pi\in R_n}\|\rho_\pi\times \sigma_\pi(a)\|<\|a\|_\ess +1/n.\]
   A countable intersection of comeagre sets is comeagre, and so $\cap_{n\in \N} R_n$ is comeagre. For each $m\in \N$ we have \[\|a\|_\ess\le \sup_{\pi\in \bigcap_{n\in \N}R_n}\|\rho_\pi\times \sigma_\pi(a)\|<\|a\|_\ess +1/m\]
and hence  $\cap_{n\in \N} R_n$ attains the infimum. This gives \Cref{eqn: ess min formula}.

Now the key observation is \cite[Corollary 4.12]{KwasniewskiMeyer2021}:\begin{equation}\label{eq key obs KM}\NN_\EL=\{a\in A\rtimes_\full S:\{\pi\in \hat{A}:\pi''(\ER(a^*a))=0\}\text{ is comeagre}\}.\end{equation}
    Fix $a\in A\rtimes_\full S$ and  suppose that $\|a\|_\ess=0$. Then by \Cref{eqn: ess min formula} there exists a comeagre  $R\subseteq \hat{A}$ such that $\|\rho_\pi\times \sigma_\pi(a)\|=0$ for all $\pi \in R$. By 
    \Cref{prop: covariant rep implements expectation}, \[R\subseteq \{\pi\in\hat A:\pi''(\ER(a^*a))=0\}\], and since $R$ is comeagre so is $\{\pi\in\hat A:\pi''(\ER(a^*a))=0\}$. Thus $a\in \NN_\EL$.

Now assume that $S$ is quasi-countable and let  $a\in \NN_\EL$. Then  $\{\pi\in \hat{A}:\pi''(\ER(a^*a))=0\}$ is comeagre. Since $S$ is quasi-countable, by  \Cref{lem: orbits are meagre}(\Cref{item2: orbits are meagre}) there exists an invariant comeagre set $R\subseteq \{\pi\in \hat{A}:\pi''(\ER(a^*a))=0\}$.  Then for all $\pi\in R= S\cdot R$ we have $\|\rho_\pi\times\sigma_\pi(a)\|=0$ by \Cref{prop: covariant rep implements expectation}. Thus $\|a\|_\ess=0$.
    
The last statement now follows from the definitions of $\Ess(A,S,\alpha)$ and $A\rtimes_\ess S$.
\end{proof}

\begin{remark}
    Suppose that $\alpha\colon S \curvearrowright A$ is aperiodic. Then \cite[Proposition~6.8]{KwasniewskiMeyer2021} also gives a seminorm such that the C*-algebraic completion of $A\rtimes_\alg S$ is isomorphic to $A\rtimes_\ess S$. This seminorm is the infimum of all seminorms on $A\rtimes_\alg S$ whose restrictions to $A$ coincide with the norm on $A$. 
\end{remark}

When $S$ is not quasi-countable we have not been able to describe an explicit norm for $A\rtimes_\ess S$ in terms of the representations $\rho_\pi\times\sigma_\pi$, but we can describe $\NN_\EL$. 

\begin{prop}
    Let $\alpha\colon S \curvearrowright A$ be an action of a unital inverse semigroup $S$ on a C*-algebra $A$.  Let $\pi \in \hat{A}$, $h\in H_\pi$, and let $\chi^h_{[1,\pi]}$ be as in \Cref{notation: point mass} and let $P_{[1,\pi]}$ be the projection onto $\clsp\{\chi^h_{[1,\pi]}:h\in H_\pi\}\subseteq \ell^2(G_\pi, H_\pi)$.  Then 
    \begin{equation}\label{eq NEL description}\NN_\EL=\{a\in A\rtimes_\full S: \{\pi\in \hat{A}:P_{[1,\pi]}(\rho_\pi\times \sigma_\pi(a^*a))P_{[1,\pi]}=0\}\text{ is comeagre}\}.\end{equation}
\end{prop}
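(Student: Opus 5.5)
The plan is to combine the characterisation \eqref{eq key obs KM} from \cite[Corollary~4.12]{KwasniewskiMeyer2021},
\[
\NN_\EL=\{a\in A\rtimes_\full S:\{\pi\in \hat{A}:\pi''(\ER(a^*a))=0\}\text{ is comeagre}\},
\]
with the formula \eqref{eq pi'' of E} of \Cref{prop: covariant rep implements expectation}. It then suffices to show that, for every $b\in A\rtimes_\full S$ (we take $b=a^*a$) and every $\pi\in\hat A$,
\[
\pi''(\ER(b))=0 \iff P_{[1,\pi]}\big(\rho_\pi\times\sigma_\pi(b)\big)P_{[1,\pi]}=0 .
\]
Once this holds, the two sets of $\pi$ appearing in the two descriptions of $\NN_\EL$ coincide, so one is comeagre exactly when the other is, and \eqref{eq NEL description} follows immediately.

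To prove the equivalence, first identify the range of $P_{[1,\pi]}$ with $H_\pi$. The map $V\colon H_\pi\to\ell^2(G_\pi,H_\pi)$, $h\mapsto\chi^h_{[1,\pi]}$, is an isometry, since $\|\chi^h_{[1,\pi]}\|=\|h\|$. Its range is the closed subspace $\{\chi^h_{[1,\pi]}:h\in H_\pi\}$, because that set is already a closed linear subspace. Hence $P_{[1,\pi]}=VV^*$, and $V^*\xi=\xi([1,\pi])$ is evaluation at the unit $[1,\pi]$.

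Now \eqref{eq pi'' of E} says exactly that $V^*\,(\rho_\pi\times\sigma_\pi(b))\,V=\pi''(\ER(b))$ as operators on $H_\pi$. Because $V$ is an isometry,
\[
P_{[1,\pi]}\,T\,P_{[1,\pi]}=V\,(V^*TV)\,V^*,
\]
and this vanishes if and only if $V^*TV$ vanishes: left-multiplying by $V^*$ and right-multiplying by $V$ recovers $V^*TV$. Taking $T=\rho_\pi\times\sigma_\pi(b)$ gives the claimed equivalence.

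The only mildly delicate points are these. First, \eqref{eq pi'' of E} was stated for all $a\in A\rtimes_\full S$ and each $h$; it extends to $b=a^*a$ automatically because it holds for every element. Second, the compression identity must be used as an operator identity, and it is: \eqref{eq pi'' of E} holds for every $h\in H_\pi$. There is no genuine obstacle, since the substantive input, the comeagre description of $\NN_\EL$, is imported from \cite{KwasniewskiMeyer2021}. Unlike \Cref{thm reconcile ess}, no quasi-countability is needed, because we never pass to orbits: we use only the diagonal compression at $[1,\pi]$ rather than the full representation $\rho_\pi\times\sigma_\pi$.
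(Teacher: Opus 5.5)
Your proposal is correct and is essentially the paper's proof. It combines the comeagre description of $\NN_\EL$ from \cite[Corollary~4.12]{KwasniewskiMeyer2021} with \eqref{eq pi'' of E} to obtain the pointwise equivalence $\pi''(\ER(a^*a))=0\iff P_{[1,\pi]}(\rho_\pi\times\sigma_\pi(a^*a))P_{[1,\pi]}=0$; writing $P_{[1,\pi]}=VV^*$ with the isometry $V\colon h\mapsto\chi^h_{[1,\pi]}$ is just a tidier packaging of the paper's observation that $P_{[1,\pi]}\xi=\chi^{\xi([1,\pi])}_{[1,\pi]}$.
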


\begin{proof}
Fix $a\in A\rtimes_\full S$. For $\pi\in\hat{A}$ and  $h\in H_\pi$ we have \[\pi''(\ER(a^*a))h=\big(\rho_\pi\times\sigma_\pi(a^*a)\chi^h_{[1,\pi]}\big)([1,\pi])\] by   \cref{prop: covariant rep implements expectation}.
For $\xi\in\ell^2(G_\pi,H_\pi)$ we have  $P_{[1,\pi]}\xi=\chi_{[1,\pi]}^{\xi([1,\pi])}$, and hence $P_{[1,\pi]}\xi=0$ if and only if $\xi([1,\pi])=0$. Thus
\begin{align*}
  \pi''(\ER(a^*a))=0&\Longleftrightarrow 
   \big(\rho_\pi\times\sigma_\pi(a^*a)\chi^h_{[1,\pi]}\big)([1,\pi])=0\text{ for all $h\in H_\pi$}\\
     &\Longleftrightarrow P_{[1,\pi]}(\rho_\pi\times\sigma_\pi(a^*a))\chi^h_{[1,\pi]} =0 \text{ for all $h\in H_\pi$}\\
     &\Longleftrightarrow P_{[1,\pi]}(\rho_\pi\times\sigma_\pi(a^*a))P_{[1,\pi]}=0.
\end{align*}
The result now follows from  the description of $\NN_\EL$ at \Cref{eq key obs KM}.
\end{proof}

\begin{remark}
If $S$ is not quasi-countable, \Cref{thm reconcile ess} only gives a sequence of quotient maps 
        \[
        A\rtimes_\red S\to \Ess(A,S,\alpha)\to A\rtimes_\ess S.
        \]
In some cases $A\rtimes_\red S\cong A\rtimes_\ess S$. This can occur, for example, when the unit space of $G(\hat{A},S,\beta)$ is closed, and the maps $\ER$ and $\EL$ become genuine conditional expectations into $A$ (see \cite[Proposition~3.20, Remark~4.6]{KwasniewskiMeyer2021}); When the unit space is not closed the reduced and essential crossed products can coincide as characterised in \cite[Corollary~4.17]{KwasniewskiMeyer2021}. See \cite[Theorem~5.22]{Clark-Exel-Pardo-Sims-Starling} for an \'etale groupoid where the essential and reduced groupoid C*-algebras coincide.
\end{remark}

\bibliographystyle{acm}
\makeatletter\renewcommand\@biblabel[1]{[#1]}\makeatother
\bibliography{references}

\end{document}

%% file: preamble.tex
\pdfoutput=1

\usepackage[utf8]{inputenc}
\usepackage[margin=2cm, headsep=0.75cm, footskip=0.75cm]{geometry}
\usepackage{amsmath, amssymb, mathtools, 
}
\usepackage[hang, flushmargin]{footmisc}
\usepackage[english]{isodate}
\usepackage{enumitem}
\usepackage[dvipsnames]{xcolor}
\usepackage{tikz}
\usepackage{tikz-cd}
\usepackage[noadjust]{cite}
\usepackage[colorlinks=true, linkcolor=blue, linkbordercolor=blue, citecolor=red, citebordercolor=red, urlcolor=indigo, linktocpage=true]{hyperref}
\usepackage[capitalise, nameinlink, noabbrev, nosort]{cleveref}
\usepackage{doi}
\usepackage[normalem]{ulem}

\usetikzlibrary{decorations.markings}

\definecolor{indigo}{HTML}{492DA5}

\allowdisplaybreaks

\setenumerate{listparindent=\parindent}

\makeatletter
\let\origsection\section
\renewcommand\section{\@ifstar{\starsection}{\nostarsection}}
\newcommand\sectionspace{\vspace{0.5ex}}
\newcommand\nostarsection[1]{\sectionspace\origsection{#1}\sectionspace}
\newcommand\starsection[1]{\sectionspace\origsection*{#1}\sectionspace}
\makeatother

\setlist[enumerate]{font=\normalfont}
\crefname{enumi}{}{}
\crefname{enumii}{}{}

\numberwithin{equation}{section}
\crefname{equation}{Equation}{Equations}

\crefname{inequality}{Inequality}{Inequalities}
\creflabelformat{inequality}{#2(#1)#3}

\newtheorem{theorem}{Theorem}[section]
\newtheorem{thm}[theorem]{Theorem}
\crefname{thm}{Theorem}{Theorems}

\newtheorem{lemma}[theorem]{Lemma}
\crefname{lemma}{Lemma}{Lemmas}

\newtheorem{prop}[theorem]{Proposition}
\crefname{prop}{Proposition}{Propositions}

\newtheorem{cor}[theorem]{Corollary}
\crefname{cor}{Corollary}{Corollaries}

\theoremstyle{definition}

\newtheorem{defn}[theorem]{Definition}
\crefname{defn}{Definition}{Definitions}

\newtheorem{remark}[theorem]{Remark}
\crefname{remark}{Remark}{Remarks}

\newtheorem{notation}[theorem]{Notation}
\crefname{notation}{Notation}{Notations}

\crefname{example}{Example}{Examples}

\newcommand{\N}{\mathbb{N}}

\newcommand{\II}{\mathcal{I}}
\newcommand{\Ii}{\mathcal{I}}
\newcommand{\JJ}{\mathcal{J}}

\newcommand{\MM}{\mathcal{M}}
\newcommand{\NN}{\mathcal{N}}

\newcommand{\BB}{\mathcal{B}}

\newcommand{\loc}{\mathrm{loc}}
\newcommand{\Mloc}{\MM_\loc}
\newcommand{\id}{\mathrm{id}}

\newcommand{\vecspan}{\operatorname{span}}

\newcommand{\clsp}{\overline{\vecspan}}

\newcommand{\dom}{\operatorname{dom}}
\newcommand{\ran}{\operatorname{ran}}

\newcommand{\alg}{\mathrm{alg}}
\newcommand{\ess}{\mathrm{ess}}
\newcommand{\Ess}{\mathrm{Ess}}
\newcommand{\full}{\mathrm{max}}
\newcommand{\red}{\mathrm{r}}
\newcommand{\Red}{\mathrm{Red}}

\newcommand{\rtimesess}{\rtimes_\ess}

\newcommand{\ER}{\mathrm{E}}
\newcommand{\EL}{\mathrm{EL}}
\newcommand{\Ind}{\operatorname{Ind}}
\newcommand{\Prim}{\operatorname{Prim}}
\newcommand{\Bis}{\operatorname{Bis}}

\newcommand{\inv}{^{-1}}